\newtheorem{thm}{Theorem}[section]
\newtheorem{theorem}{Theorem}
\newtheorem{corollary}[thm]{Corollary}
\newtheorem{lemma}[thm]{Lemma}
\newtheorem{sublemma}[thm]{Sublemma}
\newtheorem{proposition}[thm]{Proposition}
\newtheoremstyle{mydefinition}{}{}{\normalfont}{0pt}{\scshape}{.}{.5em}{}
\theoremstyle{mydefinition}
\newtheorem{defn}{Definition}
\newtheorem{definition}[defn]{Definition}
\newtheoremstyle{myremark}{}{}{\small\normalfont}{0pt}{\small\scshape}{.}{.5em}{}
\theoremstyle{myremark}
\newtheorem{remark}{Remark}
\numberwithin{equation}{section}
\def\beq{\begin{equation}}
\def\eeq{\end{equation}}
\def\beqn{\begin{equation*}}
\def\eeqn{\end{equation*}}
\def\Lip{{\rm Lip}}
\def\bB{\mathbf{B}}
\def\bD{\mathbf{D}}
\def\bE{\mathbf{E}}
\def\bF{\mathbf{F}}
\def\bG{\mathbf{G}}
\def\bJ{\mathbf{J}}
\def\bL{\mathbf{L}}
\def\bQ{\mathbf{Q}}
\def\bR{\mathbf{R}}
\def\bc{\mathbf{c}}
\def\bh{\mathbf{h}}
\def\bk{\mathbf{k}}
\def\bn{\mathbf{n}}
\def\bp{\mathbf{p}}
\def\bq{\mathbf{q}}
\def\br{\mathbf{r}}
\def\bs{\mathbf{s}}
\def\bt{\mathbf{t}}
\def\bx{\mathbf{x}}
\def\bgamma{{\boldsymbol{\gamma}}}
\def\btheta{{\boldsymbol{\theta}}}
\def\bLambda{{\boldsymbol{\Lambda}}}
\def\btau{{\boldsymbol{\tau}}}
\def\bdelta{{\boldsymbol{\delta}}}
\def\b1{{\boldsymbol{1}}}
\def\cA{\mathcal{A}}
\def\cB{\mathcal{B}}
\def\cD{\mathcal{D}}
\def\cE{\mathcal{E}}
\def\cF{\mathcal{F}}
\def\cG{\mathcal{G}}
\def\cH{\mathcal{H}}
\def\cK{\mathcal{K}}
\def\cL{\mathcal{L}}
\def\cM{\mathcal{M}}
\def\cN{\mathcal{N}}
\def\cO{\mathcal{O}}
\def\cP{\mathcal{P}}
\def\cR{\mathcal{R}}
\def\cS{\mathcal{S}}
\def\cT{\mathcal{T}}
\def\cU{\mathcal{U}}
\def\cW{\mathcal{W}}
\def\cZ{\mathcal{Z}}
\def\IC{{\mathbb C}}
\def\IE{{\mathbb E}}
\def\IH{{\mathbb H}}
\def\IN{{\mathbb N}}
\def\IR{{\mathbb R}}
\def\IZ{{\mathbb Z}}
\def\fA{\mathfrak{A}}
\def\fF{\mathfrak{F}}
\def\hG{\tilde{G}}
\def\hW{\hat{W}}
\def\hmu{\hat{\mu}}
\def\eps{\varepsilon}
\def\hcW{\hat{\mathcal{W}}}
\def\eps{\varepsilon}
\def\ds{\displaystyle}
\def\ab{\underline{a}}
\def\cb{\underline{c}}
\def\opi{\overline{\pi}}
\def\ovarphi{\overline{\varphi}}
\def\bt{\overline{t}}
\def\tb{\underline{t}}
\def\fA{\mathfrak{A}}
\def\fF{\mathfrak{F}}
\def\hGamma{\widehat{\Gamma}}
\def\hcR{\widehat{\cR}}
\def\hF{\widehat{F}}
\def\hx{\widehat{x}}
\def\hdelta{\widehat{\delta}}
\def\hW{\widehat{W}}
\def\hcW{\widehat{\cW}}
\def\hcU{\widehat{\cU}}
\def\oT{\overline{T}}
\def\hmu{\widehat{\mu}}
\def\bLambda{{\boldsymbol{\Lambda}}}
\def\bGamma{{\boldsymbol{\Gamma}}}
\def\bOmega{{\boldsymbol{\Omega}}}
\def\btau{{\boldsymbol{\tau}}}
\def\bcK{{\boldsymbol{\cK}}}
\def\bcE{{\boldsymbol{\cE}}}
\def\bvf{{\boldsymbol{\varphi}}}
\def\beps{{\boldsymbol{\eps}}}
\def\bdelta{{\boldsymbol{\delta}}}
\begin{document}

\title{Markov partition and Thermodynamic Formalism for Hyperbolic Systems with Singularities}

\author{Jianyu Chen\thanks{Department of Mathematics and Statistics, University of Massachusetts,  Amherst MA 01003, USA. Email: jchen@math.umass.edu. }
\and Fang Wang \thanks{School of Mathematical Sciences, Capital Normal University, Beijing, 100048, China;  and Beijing Center for Mathematics and Information
Interdisciplinary Sciences (BCMIIS), Beijing 100048, China.  Email: fangwang@cnu.edu.cn.}
\and Hong-Kun Zhang\thanks{Department of Mathematics and Statistics, University of Massachusetts,  Amherst MA 01003, USA. Email: hongkun@math.umass.edu. }}

\date{\today}

\maketitle

\begin{abstract}
For 2-d hyperbolic systems with singularities, statistical properties are rather difficult to establish because of the fragmentation of the phase space by singular curves. In this paper, we  construct a Markov partition of the phase space with countable states for a general class of hyperbolic systems with singularities.
Stochastic properties with respect to the SRB measure immediately follow
from our construction of the Markov partition,
including the decay rates of correlations and the central limit theorem.
We further establish the thermodynamic formalism for the family of geometric potentials,
by using the inducing scheme of hyperbolic type.
All the results apply to Sinai dispersing billiards, and their small perturbations
due to external forces and nonelastic reflections with kicks and slips.

\vskip.5cm

\noindent \textbf{Keywords:}
Hyperbolicity,
Markov partition,
Coupling lemma,
Thermodynamic formalisms,
Sinai  billiards.
\end{abstract}

\maketitle

\tableofcontents

\section{Introduction}

The study of the statistical properties of two-dimensional hyperbolic
systems with singularities is motivated mainly by mathematical
billiards with chaotic behavior, introduced by Sinai in \cite{Sin70} and
studied extensively by many mathematicians (cf.~\cite{BSC90, BSC91, C01, D01, CD, CM, Y98, Y99,
SzV04, CZ05a, BG06, C08, DSV08, CD09a, DSV09, CD09b, CZ09, CD10, BCD11, DZ11, Zh11, DZ13}).

Since the fundamental work by Hadamard and Morse, the method of  symbolic dynamics has been
widely used in the study of hyperbolic dynamical systems.
The achievements of coding dynamical systems using Markov partitions
by Sinai~\cite{Sin72}, Bowen~\cite{B75}, and Ruelle~\cite{Ru78} yield remarkably results on
the exponential decay of the correlation and other useful stochastic properties for Axiom A systems.
It is thus believed for a long time that the construction of Markov partitions is
the method to obtain effective statistical statements for more complicated hyperbolic systems.
However, Markov partitions are too delicate to construct
for the chaotic billiards and other related hyperbolic systems with singularities,
as demonstrated in \cite{BS80,BSC90}, where the Markov partitions were carefully constructed by Bunimovich, Chernov and Sinai,
because of the fragmentation of the phase space by singularities.
In particular, arbitrary short stable and unstable manifolds may be dense in the phase space.
As a consequence, even if the Markov partition exists,
it is necessarily countable with elements being products of Cantor sets. Thus advanced knowledge of the geometric structure of the elements of the Markov partition is crucial for further study of statistical properties of the dynamical system.


In this paper, for uniformly hyperbolic systems with singularities
under the standard hypotheses (\textbf{H1})-(\textbf{H5}) listed in Subsection \ref{sec: assumptions},
we are able to  construct a hyperbolic set, such that the first return to the hyperbolic set is Markov.
Moreover, the return time sequence has an exponential tail bound.
Because of the existence of singular curves,
it is a rather delicate issue to construct the special hyperbolic set,
which is necessarily a nowhere dense Cantor set.
In Theorem \ref{main}, we construct such a hyperbolic set $\cR^*$,
called a ``hyperbolic product set",
based on the concepts of the standard pairs
and standard families used by Chernov and Dolgopyat (cf.~\cite{D01, CD}),
as well as the coupling lemma proved in \cite{CM} for hyperbolic billiards
and then extended to rather general hyperbolic systems with singularities in~\cite{CZ09}.
Moreover, the Markov property of the first return to the base is due to the special property of the hyperbolic product $\cR^*$ (see Definition~\ref{def regular property}),  which is ensured by the presence of two pairs of special stable/unstable manifolds.

It has been a mathematically challenging problem to obtain the stochastic properties
for the hyperbolic systems with singularities.
The main difficulty is caused by the singularities and the resulting fragmentation of phase space,
which slows down the global expansion of unstable manifolds.
Moreover, the differential of the billiard map is unbounded and has unbounded distortion near the singularities.
In Theorem \ref{main},
we are able to construct a real Markov tower for the dynamical systems, which thus is equivalent to
a countable two-sided Markov shift from the viewpoint of symbolic dynamics.
{ A direct consequence is that the periodic orbits are dense in the basin of the SRB measure.
}Our  Markov tower  models the original uniformly hyperbolic systems with singularities, and
paves the way to understanding the ergodic and statistical properties
of stochastic processes generated by ``nice" observables.
We state the following statistical properties, which are
direct consequences of the Markov tower  structure (cf.~\cite{Y98, Y99}), as well as the coupling method (cf.~\cite{CZ09}).
\begin{itemize}
\item[(1)] Decay of Correlations:
The decay rate of correlations is closely related to the tail bound of the hitting time sequences to the tower base $\cR^*$.
That is, the exponential tail bound implies the exponential decay of correlations
for  bounded piecewise H\"older continuous functions. See Theorem \ref{thm: exp M} for precise statements.
\item[(2)] The Central Limit Theorem: Due to the exponential tail bound,
the standardization of the ergodic sum converges to the standard Gaussian distribution,
for  bounded piecewise H\"older observables (see Theorem \ref{thm: CLT M}).
\end{itemize}

The existence of Markov partition can be used to prove many other statistical properties using approaches for Gibbs-Markov maps.  Moreover, our approach can also be extended to non-uniformly hyperbolic systems with singularities,
which admit  reduced systems that satisfy Assumptions (\textbf{H1})-(\textbf{H5}), using the inducing scheme as in  \cite{CZ05a}. The research in both directions are currently under way.

Another breakthrough of this paper is that we are able to take the advantage of  the improved Markov partition , and  obtain results  on the thermodynamic formalism for hyperbolic systems with singularities. The main tools are  based on the thermodynamic formalism for the Markov shifts with countable states (cf.~Sarig \cite{Sarig99, Sarig01, Sarig03}).
The major goal in the thermodynamic formalism is to study a wide class
of natural invariant measures for a given dynamical system,
which are called the \emph{equilibrium measures} for potential functions with certain regularity,
concerning the existence, finitude and uniqueness of equilibrium measures,
as well as their ergodic and statistical properties. The equilibrium measures for the H\"older potentials had been well established for Axiom A systems
and other uniformly hyperbolic systems that admit finite Markov partitions
in the classical works of Sinai, Bowen, and Ruelle,
based on the thermodynamic formalism for the subshifts of finite type.
However, the non-compactness of the shift space becomes the main obstacle in constructing
the equilibrium measures for countable-state Markov chains.
Great efforts have been made to overcome this difficulty
(cf.~\cite{VJ62, Dob68, Gurevich69, LanRu69, Gurevich70, ADU93,
Sarig99, Sarig01, Buzzi-Sarig03, Sarig03}, etc).
Using principle results obtained by Sarig (cf.~\cite{Sarig99, Sarig01, Sarig03})
on the thermodynamic formalism for the countable Markov shifts,
Pesin, Senti and Zhang have recently developed in \cite{PS08, PSZ08, PSZ16}
a powerful framework called inducing schemes,
which are applicable to some multimodal interval maps,
Young's diffeomorphisms, the H\'enon family and the Katok map.

In this paper, using the Markov partitions obtained by the improved Markov partition , we are able to obtain the new results on thermodynamic formalisms
for uniformly hyperbolic systems with singularities with respect to the
one-parameter family of geometric potentials $\varphi_t$ given by \eqref{geo potential}.
Due to the fragmentation caused by countable singularities,  the Condition (22) used in \cite{PSZ16} fails for a large class of hyperbolic systems with singularities including most billiard systems. To overcome this difficulty, we are able to propose a new and much weaker  condition - Condition (\textbf{P*}). By taking the advantage of  the exponential tail bound of $\cR^*$, we are able
to prove Theorem~\ref{thm: equi measures}. The main results is that for a range of geometric potential, the corresponding equilibrium measure  exists and is unique. Furthermore, under
 this equilibrium measure, the system has exponential decay of correlations and satisfies the central limit theorem
for the class of locally bounded H\"older continuous observables.

 To illustrate our main theorems, we also apply the above results to Sinai dispersing billiards with finite horizon, as well as their
small perturbation due to external forces with kicks and twists,
whose ergodic and statistical properties have been widely studied in
\cite{Sin70, GO74, SC87, CH96, C01, C08, Zh11, DZ11, DZ13}.
Our main theorems provide a unified approach to study various statistical properties and the
thermodynamic formalisms for this class of billiard systems.

This paper is organized in the following way.
In Section \ref{sec: ass and results}, we list the
standard hypotheses (\textbf{H1})-(\textbf{H5}) for the hyperbolic systems with singularities,
and then state our main theorems.
In Section \ref{sec: Growth}, we introduce the concepts of standard pairs
and standard families, together with the growth lemma.
The definitions of hyperbolic product sets and magnets,
as well as the coupling lemma, are introduced in Section \ref{sec: magnet}.
We prove our first main theorem - Theorem \ref{main} in Section \ref{sec: main proof},
where the coupling lemma plays an important role.
In this section, we also detect periodic points via the symbolic coding,
which helps us to obtain a suitable hyperbolic product set as the base of Markov partition.
Our first main theorem implies several advanced statistical properties,
which we shall provide in Section~\ref{sec:stat}.
In Section \ref{sec TF IS}, we recall necessary preliminaries on the thermodynamic formalisms
for the countable Markov shifts, and prove our second main theorem - Theorem~\ref{thm: equi measures}, by
applying the inducing scheme on the tower base.
In Section \ref{sec: app}, Theorem \ref{thm: C1} - the application of our main results to the Sinai billiards and their small perturbations - is proven by verifying
Assumptions (\textbf{H1})-(\textbf{H5}) and Condition (\textbf{P*}).

\section{Assumptions and Main Results}\label{sec: ass and results}

In this paper we study hyperbolic systems with singularities.
Let $M$ be a smooth Riemannian surface (possibly with boundary) and $T: M\to M$ be a time-reversible map with singularities, see Assumption (\textbf{H2}) below for details.

We denote by $d(\cdot,\cdot)$ the geodesic distance between two points on $M$.
Given a smooth curve $W\subset M$, we denote by $d_W(\cdot, \cdot)$ the distance on $W$ under the induced Riemannian metric on $W$. Furthermore, let $m_W$ be the Lebesgue measure on $W$ under $d_W(\cdot, \cdot)$,
and $|B|:=m_W(B)$ be the corresponding ``length" for any measurable subset $B\subset W$.

\subsection{Assumptions.}\label{sec: assumptions}

In the following we list and briefly explain the assumptions for our main theorems.

\bigskip

\noindent \textbf{(H1)  Uniform hyperbolicity.}
There exist two families of continuous cones $C^u_x$ (unstable) and $C^s_x$
(stable)  in the tangent spaces $\cT_x M$, for all $x\in M$,
with the following
properties:
\begin{itemize}
\item[(1)] $D_x T (C^u_x)\subset C^u_{ T x}$ and $D_x T (C^s_x)\supset C^s_{ T x}$  whenever $DT$ exists.
\item[(2)] There exists a constant $\Lambda>1$ such that
$$
\|D_x T(v)\|\geq \Lambda \|v\|,\ \forall  v\in C_x^u, \ \ \text{and}\ \
\|D_xT^{-1}(v)\|\geq \Lambda \|v\|,\  \forall v\in C_x^s.
$$
\item[(3)] The angle between $C^u_x$ and $C^s_x$ is uniformly bounded away
from zero.
\end{itemize}

\begin{definition}
A smooth curve $W\subset M$ is said to be an unstable (resp. stable) curve,
if the tangent line $\cT_x W$ belongs to the unstable (resp. stable) cone $C^u_x$ (resp. $C^s_x$) at every point $x \in W$.
As usual,
a curve $W\subset M$ is called an unstable (resp. stable) manifold if $T^{-n}W$ is an unstable (resp. stable) curve for all $n\ge 0$ (resp. $n\le 0$).
\end{definition}

\bigskip

\noindent {\textbf{(H2) Singularities}.}
Let $\cS_0\subset M$ be a closed subset containing $\partial M$.
We set $\cS_{\pm 1}=\cS_0\cup T^{\mp 1}\cS_0$, and call them the singularity set of $T^{\pm 1}$ respectively.

\begin{enumerate}
\item[(1)]
The map $T: M\backslash \cS_1 \to M\backslash \cS_{-1}$ is a $C^{1+\bgamma_0}$ diffeomorphism
for some $\bgamma_0\in (0, 1]$.
Moreover, $T$ can be extended  by continuity on the closure of each component of $M\backslash \cS_1$.\footnote{The extensions on different components may be different.}

\item[(2)]
Both $\cS_1$ and $\cS_{-1}$ consist of finite or countably many compact smooth curves.
Moreover, curves in $\cS_0$ are uniformly transverse to both stable and unstable cones,
and curves in $\cS_1\backslash \cS_0$ (resp. $\cS_{-1}\backslash \cS_0$) are stable (resp. unstable) curves.
Each curve in $\cS_{1}$ (resp. $\cS_{-1}$) terminates either inside another curve of
{ $\cS_{1}\backslash \cS_0$ (resp. $\cS_{-1}\backslash \cS_0$) or on $\cS_0$}.

\item[(3)] There exist $\bq_0\in (0, 1)$, $\bs_0\in (0,1]$ and $C>0$, such that for any $x\in M\setminus \cS_1$,
     \beq\label{def q0}
     \|D_x T \|\leq C  d(x, \cS_1)^{-\bq_0}.
     \eeq
     Moreover,  the $\eps$-neighborhood of the singularity set $\cS_1$ is not too  heavy under $m$:
  \beq\label{def s0}
  m(\{x\in M\,:\, d(x, \cS_{\pm 1})\leq
    \eps\})\leq C\cdot \eps^{\bs_0},\ \ \forall\ \eps>0 .
  \eeq
\end{enumerate}

\bigskip

It follows from \cite{KS86} that condition (\ref{def q0}) implies that there exist stable manifold $W^s(x)$ and unstable manifold $W^u(x)$
for $m$-a.e. $x\in M$.
For any $n\geq 1$, let $\ds\cS_{\pm n}=\cup_{m=0}^{n-1} T^{\mp m}\cS_{\pm 1}$
be the singularity set of $T^{\pm n}$,
and $\ds\cS_{\pm \infty}=\cup_{m\geq 0} \cS_{\pm m}$.
Note that any maximal stable/unstable manifold $W^{s/u}$ is an open connected curve in $M\backslash \cS_{\pm \infty}$,
usually with two endpoints in $\cS_{\pm \infty}$.\footnote{
In the paper, $W^{s/u}$ always stand for the maximal stable/unstable manifolds.
}
Assumption \textbf{(H2)}(2) implies that the angles between both stable and
unstable manifolds with the singular curves in
$\cS_{\pm 1}$ is uniformly bounded away from zero at their intersection points.\\


A $T$-invariant Borel probability measure $\mu$ on $M$ is
said to be an {\it SRB measure}  (short for Sinai-Ruelle-Bowen),
if the conditional measure of $\mu$ on each unstable manifold $W^u$ is
absolutely continuous with respect to the Lebesgue measure $m_{W^u}$.
Under our  assumptions (\textbf{H1})-(\textbf{H2}) and (\textbf{H4}) (stated below),
the {\it existence} and \emph{finitude} of SRB measures can be derived by
standard arguments, see~\cite{KS86, P92, Sataev92, Y98, ABV, CD, DZ11, DZ13} for references.
Here the finitude means that there are finitely many ergodic SRB measures, and each of them
is mixing up to a cyclic permutations.
\\

\noindent {\textbf{(H3) SRB measure}.} There is a mixing SRB measure $\mu$ for the system $(M, T)$.
\\

As $\mu$ might not be the unique SRB measure, from now on,
whenever we pick initial points or stable/unstable curves,
we automatically take them from the basin of $\mu$ without emphasizing.

\bigskip

\begin{definition}
Given two points $x, y \in M$. Let $\bs_+(x,y)$ be the smallest integer
$n\geq 0$ such that $x$ and $y$ belong to distinct components of
$M\setminus\cS_n$, which is called the \emph{forward separation time} of $x, y$.
The \emph {backward separation time $\bs_-(x,y)$} is defined in a similar fashion.
\end{definition}

\noindent {\textbf{(H4) Regularity of stable/unstable manifolds}.}
We assume there exist a family of stable manifolds $\cW^s$ and a family of unstable manifold  $\cW^{u}$,
which are invariant under $T^{\pm}$.
The families $\cW^{s/u}$ are regular in the following sense:
\begin{itemize}
\item[(1)] \textit{Bounded curvature and length.}
There exist $B>0$ and $\bc_M>0$ such that for any $W\in \cW^{u/s}$, the curvature of $W$ is bounded
from above by $B$, and the length $|W|\le \bc_M$.\footnote{
Bounded length can be obtained by adding auxiliary singular curves, if necessary.
}

\item[(2)] \textit{Bounded distortion of $T$.} There exist
     $C_{\bJ}>1$
     and $\bgamma_0\in(0,1]$ such that for each unstable
      manifold $W\in \cW^u$ and each pair of points $x,\ y\in W$,
 \beq
	  \left|\,\log J_W T(x)-\log J_W T(y)\right|
		\leq	C_{\bJ}\, d(x,y)^{\bgamma_0}\label{distor0},
 \eeq
 where $J_W T(x)$ is  the Jacobian of $T$ at $x\in W$ with respect to the Lebesgue measure on $W$.\footnote{
     The subscript $\bJ$ in the constant $C_{\bJ}$ stands for Jacobian.}

\item[(3)]\textit{Absolute continuity.} For each pair of regular
unstable manifolds $W^1$ and $ W^2$, which are close to each other, we define
\beq\label{abs cts domain}
   W^i_{\ast} := \{ x\in W^i \colon
   W^s( x) \cap W^{3-i} \neq \emptyset\}, \ \ \text{for} \ i=1, 2.
\eeq
The holonomy map $\bh^s:W^1_{\ast}\to W^2_{\ast}$
along stable manifolds is absolutely continuous, with its
Jacobian $J_{W^1_*}\bh^s$ uniformly bounded from above and away from zero.
Furthermore, there exists $ \btheta\in(0,1)$ such that for any $x,\ y\in W^1_*$,
 \beq\label{cjchb}
 |\log J_{W^1_*}\bh^s(x)-\log J_{W^1_*}\bh^s(y)|\leq
 C_{\bJ}\cdot\btheta^{\bs_+(x,y)}.
 \eeq
\end{itemize}



\bigskip

Because of the unbounded derivative of the map (\ref{def q0}), we may need to add extra singular curves to guarantee the distortion bound. A useful strategy is to add extra homogeneity strips,  see for example (\ref{homogeneity}); which was first used by Sinai \cite{Sin70}.
Since the singularity sets $\cS_{\pm 1}$ may contain countably infinitely many curves,
some of the stable/unstable manifolds may be relatively short.
The following condition ensures that
a large portion of these invariant manifolds are not that short after sufficiently many iterates.
See Lemma \ref{lem: global} for a quantitative estimate.\\

\noindent {\textbf{(H5) { One-step expansion.}} Given an unstable manifold $W\subset M$, we let
$\{V_\alpha\}$ be the collection of connected components of $TW\backslash \cS_{-1}$
and set $W_\alpha=T^{-1}V_\alpha$. We have the following assumption:
\beq
\liminf_{\delta_0\to 0}\ \sup_{W\colon
  |W|<\delta_0} \ \sum_{V_{\alpha}}
 \left(\frac{|W|}{|V_{\alpha}|}\right)^{{\bs_0}}
 \frac{|W_{\alpha}|}{|W|}<1,
\eeq
where the supremum is taken over all unstable manifolds $W\subset M$, and $\bs_0\in (0,1]$ is given in (\ref{def s0}).
\\

Next we specify the class of observables that we will use to investigate the statistical properties, especially the decay rates of correlations.
For any $\gamma\in (0,1)$, we consider those bounded, real-valued  functions $f\in L^{\infty}(M,\mu)$ such that, there exists a measurable foliation $\cW_f^s$ of $M$ into stable curves, with the property that  for any $x$ and $y$ lying on one stable curve $W\in \cW_f^s$,
\beq \label{sDHC-} 	|f(x) - f(y)| \leq \|f\|^-_{\gamma,\cW^s_f} d (x,y)^{\gamma},\eeq
with
$$\|f\|^-_{\gamma,\cW^s_f}\colon = \sup_{W\in \cW^s_f,}\sup_{ x, y\in W}\frac{|f(x)-f(y)|}{ d (x,y)^{\gamma}}<\infty.$$
Note that there may exist several measurable partitions $\cW^s_{f,\alpha}$ of $M$ into stable curves, such that (\ref{sDHC-}) holds, for $\alpha$ belonging to an index set $\cA_f$. We denote these partitions as $$\cW^-_f:=\{\cW^s_{f,\alpha},\alpha\in \cA_f\}.$$
We also require that the stable foliation $\cW^s$ of $M$ belongs $\cW^-_f$.
Now we define
$$\|f\|^-_{\gamma}\colon =\sup_{\alpha\in \cA_f} \|f\|^-_{\gamma,\cW^s_{f,\alpha}}.$$

Let $\cH^-(\gamma)$ be the collection of all such observables $f$, such that $\|f\|^-_{\gamma}<\infty$. Then for any $f\in \cH^-(\gamma)$, any stable curve $W\in \cW^-_f$, we have
\beq \label{DHC-} 	|f(x) - f(y)| \leq \|f\|^-_{\gamma}  d (x,y)^{\gamma}.\eeq

Similarly, we define $\cH^{+}(\gamma)$ as the set of all bounded, real-valued  functions $g\in L^{\infty}(M,\mu)$  such that,  for any $g\in \cH^+(\gamma)$, any unstable curve $W\in \cW^+_g$, we have
\beq \label{DHC+} 	|g(x) - g(y)| \leq \|g\|^+_{\gamma}  d (x,y)^{\gamma}.\eeq
Here $\cW^+_g=\{\cW^u_{g,\alpha},\, \alpha\in \cA_g\}$ is the collection of measurable partitions $\cW^u_{g,\alpha}$ of $\cM$ into unstable curves $\cW^u_g$, such that the unstable foliation $\cW^u$ of $M$ belongs $\cW^+_g$. Moreover,
for any $x$ and $y$ lying on one unstable curve $W \in \cW^{u}_{g,\alpha}$,
\beq \label{DHC+1} 	
   |g(x) - g(y)| \leq \|g\|^+_{\gamma,\cW^u_{g,\alpha}} d (x,y)^{\gamma},
\eeq
where
$$
   \|g\|^+_{\gamma,\cW^u_{g,\alpha}}\colon = \sup_{W\in \cW^u_{g,\alpha}}\sup_{ x, y\in W}\frac{|g(x)-g(y)|}{ d (x,y)^{\gamma}}<\infty.
$$
 Thus we have that
$$\|g\|^+_{\gamma}\colon =\sup_{\alpha\in \cA_g} \|g\|^+_{\gamma,\cW^u_{g,\alpha}}<\infty.$$

For every $f\in \cH^{\pm}(\gamma)$ we define
\beq \label{defCgamma}
   \|f\|^{\pm}_{C^{\gamma}}\colon=\|f\|_{\infty}+\|f\|^{\pm}_{\gamma}.
\eeq
In particular, if $f$ is H\"{o}lder continuous on every component of $M\setminus \cS_k$, for some integer $k\in \mathbb{Z}$, with H\"{o}lder exponent $\gamma$, then one can check that $f\in \cH^{\pm}(\gamma)$. To study the statistical properties of $f\circ T^n$, we usually require $f\in H(\gamma):=H^+(\gamma)\cap H^-(\gamma)$.

\subsection{Statement of Results}\label{Sec: Generalized Markov partition }

In this paper, we always assume that the map $T: M\to M$ satisfies
(\textbf{H1})-(\textbf{H5}) given in Subsection~\ref{sec: assumptions}.

The first main result in this paper is the existence of a countable Markov partition for the system $(M, T)$.
Moreover, the measure of elements of the partition has exponential tail bounds.
Our first main theorem is stated as follows.

\begin{theorem}\label{main}
There is a hyperbolic product set $\cR^*\subset M$ of positive $\mu$-measure
such that the following properties hold:
\begin{enumerate}
\item[(1)] \textbf{Decomposition of $\cR^*$:}
there is a countable family of mutually disjoint (mod $\mu$) closed $s$-subsets\footnote{See Definition \ref{defnsset} for s/u-subset.} $\cR_n^*\subset \cR^*$ such that
$\cR^*=\cup_{n\geq 1} \cR_n^* \pmod \mu$.\footnote{
Some $\cR_n^*$ can be null, i.e., $\mu(\cR_n^*)=0$.
}

\item[(2)] \textbf{First return is proper:} for any $\cR_n^*$ with $\mu(\cR_n^*)>0$, $T^n\cR_n^*$ returns to $\cR^*$ for the first time with the following properties:
  \begin{itemize}
  \item[(i)]  $T^{n} \cR_n^*$ is a $u$-subset of $\cR^*$;
  \item[(ii)] $T^i\cR_n^*\cap\cR^*=\emptyset \pmod \mu$  for any $i=1, 2, \cdots, n-1$.
  \end{itemize}

\item[(3)] \textbf{Exponential tail bound:}
there are $C_*>0$ and $\theta_*\in (0, 1)$ such that
    \beq\label{cRmbd}
    \sum_{k\geq n} \mu(\cR_k^*)\leq C_* \theta_*^{n}, \ \ \ \  \forall n\ge 1.
    \eeq
\end{enumerate}
As a result, the following collection of disjoint measurable sets
\beq\label{def Markov Partition}
\cP:=\{T^k \cR_n^*: \  n\in \IN, \ k=0, \dots, n-1 \}
\eeq
forms a Markov partition with respect to the SRB measure $\mu$. 
\end{theorem}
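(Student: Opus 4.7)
The plan is to combine the standard-pair and coupling-lemma technology of Chernov--Dolgopyat \cite{D01, CD, CZ09} with an explicit construction of a magnet $\cR^*$ that carries the \emph{regular property} of Definition~\ref{def regular property}; this regular structure is precisely the device that forces the first return to be Markov, while the coupling lemma supplies the exponential tail.

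\textbf{Construction of the magnet.} First I would select a density point $x_*$ of $\mu$ lying on comparatively long maximal stable and unstable manifolds $W^s(x_*),W^u(x_*)\in\cW^s\times\cW^u$. Absolute continuity of the stable holonomy (\textbf{H4})(3) produces a positive-$\mu$-measure family of nearby unstable manifolds all crossing $W^s(x_*)$ and symmetrically for stable manifolds; the Cantor-like intersection of the two families inside a small topological rectangle yields a hyperbolic product set of positive $\mu$-measure. To install the regular property, I would further bracket the rectangle between two designated pairs of ``corner'' stable and unstable manifolds $W^{s}_{\pm},\,W^{u}_{\pm}$ that serve as its boundary. As flagged in the introduction, these corner manifolds can be pinned down through periodic orbits detected by symbolic coding from iterates of the coupling scheme; this produces a set $\cR^*$ which is a hyperbolic product set of positive $\mu$-measure equipped with the two pairs of bounding manifolds required by Definition~\ref{def regular property}.

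\textbf{First return decomposition and Markov property.} For $x\in\cR^*$ let $r(x)=\min\{n\ge1:T^n x\in\cR^*\}$ and set $\cR_n^*=\{x\in\cR^*:r(x)=n\}$. Because $T$ contracts along stable leaves, $r$ is constant on each stable fibre of $\cR^*$, so every $\cR_n^*$ is an $s$-subset; this gives (1) and (2)(ii) automatically. For (2)(i), I would argue that if an unstable leaf of $T^n\cR_n^*$ enters the interior of $\cR^*$, then the two corner stable manifolds $W^s_{\pm}$ prevent it from being clipped prematurely either by $\partial\cR^*$ or by a curve of $\cS_{-\infty}$ before fully crossing between $W^s_-$ and $W^s_+$ --- this is exactly the role of the regular property. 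Hence $T^n\cR_n^*$ is a $u$-subset, the first return map sends $s$-subsets to $u$-subsets, and the induced family $\cP$ in \eqref{def Markov Partition} is a genuine countable Markov partition for $\mu$.

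\textbf{Exponential tail bound.} Disintegrating $\mu|_{\cR^*}$ along its unstable fibres yields a standard family; iterating $T$ and invoking the growth lemma of Section~\ref{sec: Growth} together with (\textbf{H5}) keeps a uniformly positive fraction of mass on ``long'' unstable curves at every step. The coupling lemma of \cite{CM, CZ09} then couples a definite fraction of mass back into $\cR^*$ per iteration, so the uncoupled mass decays geometrically. Because the regular property guarantees that every coupled return is a clean crossing (not a partial hit), the coupling time coincides with a first-return time, and
\[
\sum_{k\ge n}\mu(\cR_k^*)=\mu(r\ge n)\le C_*\theta_*^n.
\]

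\textbf{Main obstacle.} The delicate point is the magnet construction: we need $\cR^*$ to simultaneously have positive $\mu$-measure, admit the rigid pair of bounding corners demanded by the regular property, and be ``reachable'' by enough unstable curves for the coupling lemma to deliver an exponential tail. Positive measure and rigidity of the corners pull in opposite directions, and reconciling them is why the symbolic detection of periodic points is needed --- periodic orbits supply natural, dynamically closed candidates for $W^{s/u}_\pm$ without forcing $\cR^*$ to collapse in measure. A secondary difficulty is translating the coupling (or hitting) time produced abstractly by \cite{CM, CZ09} into an honest first-return time along every stable fibre; this relies crucially on the regular property so that any return captured by the coupling is automatically a full $u$-subset crossing.
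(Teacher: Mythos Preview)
Your overall architecture matches the paper's: a preliminary magnet, periodic points found via symbolic coding, a final magnet $\cR^*$ whose solid hull is bounded by stable/unstable manifolds of a periodic orbit (the paper calls this the \emph{perfect} property), and the coupling lemma for the exponential tail. But the order in which you assemble the decomposition is reversed from the paper's, and this creates a genuine gap.

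You define $\cR_n^*=\{r=n\}$ directly from the first return time and then assert that ``because $T$ contracts along stable leaves, $r$ is constant on each stable fibre of $\cR^*$, so every $\cR_n^*$ is an $s$-subset.'' Contraction alone does not give this. Two points $x,y$ on the same stable fibre of $\cR^*$ have images $T^kx,T^ky$ on the same (short) stable curve, but membership in $\cR^*$ also requires that the \emph{unstable} manifold through the point fully crosses $\cU^*$, and $W^u(T^kx)$ and $W^u(T^ky)$ are different manifolds. So $T^kx\in\cR^*$ does not a priori force $T^ky\in\cR^*$; one can land in a $u$-gap of $\cR^*$ while the other lands in $\cR^*$ itself. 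Proving that this cannot happen (mod $\mu$) is precisely the content of the paper's Lemma~\ref{Unn}, and it requires the full perfect property together with a gap analysis (Lemma~\ref{gap2}) and an induction on $n$ with several geometric cases, not just contraction.

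The paper avoids this trap by going in the opposite direction: it first applies the coupling lemma to $\cR^*$ to obtain a decomposition $\cR^*=\bigcup_{n,j}\cR^s_{n,j}$ into $s$-subsets \emph{by construction} (Proposition~\ref{couplingdecompose}), with $T^n\cR^s_{n,j}$ automatically a $u$-subset, and only afterwards proves (Lemma~\ref{Unn}) that the coupling index $n$ is in fact the first return time. The exponential tail then comes for free from the coupling lemma. Your ``secondary difficulty'' paragraph shows you sense this issue, but in your main argument you have placed the hard step (coupling time $=$ first return time) in the wrong slot and replaced it with an incorrect one-line justification.
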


The precise definitions of the hyperbolic product set and the $s/u$-subsets will be given in Section \ref{sec: magnet}.

According to the above theorem,  the original system $T: M\to M$
with respect to the SRB measure $\mu$ is conjugate to a Markov shift  with countable states.
{ An immediate consequence is the density of periodic orbits. More precisely, we have

\begin{theorem} The set of periodic orbits for the system $T: M\to M$
is dense in the basin of the SRB measure $\mu$.
\end{theorem}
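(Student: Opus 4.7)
The plan is to transport the classical density of periodic points on topologically mixing countable Markov shifts back to $M$ via the symbolic coding supplied by Theorem~\ref{main}. By that theorem, the collection $\cP$ in \eqref{def Markov Partition} is a Markov partition of the tower
\[
\Delta \;=\; \bigsqcup_{n\ge 1}\ \bigsqcup_{0\le k<n} T^k\cR_n^*,
\]
which has full $\mu$-measure by \eqref{cRmbd}, so the restriction of $T$ to $\Delta$ is conjugate, modulo measure-zero boundaries, to a two-sided countable topological Markov shift $(\Sigma,\sigma)$ with coding map $\pi:\Sigma\to\Delta$. Mixing of $\mu$ from hypothesis \textbf{(H3)} transfers to topological mixing of $(\Sigma,\sigma)$.

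The first step is the standard density of periodic points in any topologically mixing countable Markov shift: given an admissible finite word $[a_{-m},\ldots,a_m]$, topological mixing supplies an admissible loop from $a_m$ back to $a_{-m}$ and thereby a periodic sequence inside the cylinder $[a_{-m},\ldots,a_m]$. Each periodic $\omega\in\Sigma$ of symbolic period $p$ is then lifted by $\pi$ to a periodic orbit of $T$: using the hyperbolic product structure of $\cR^*$ (intersection of a continuous family of unstable manifolds with one of stable manifolds) together with uniform hyperbolicity \textbf{(H1)}, the nested intersection of local stable and unstable cylinders corresponding to $\omega$ collapses to a single point, which is fixed by $T^q$ where $q=\sum_{i=0}^{p-1}\tau(F^i\pi(\omega))$ is the total return time along one symbolic period.

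Finally, the transfer of density back to $M$ uses that long cylinders have small diameter: \textbf{(H1)} together with the bounded-curvature-and-length condition in \textbf{(H4)(1)} forces the diameter of any symbolic cylinder of length $2m+1$ to decay at geometric rate $\Lambda^{-m}$, so $\pi$ is H\"older continuous. Consequently, density of periodic points in $\Sigma$ yields density of periodic orbits of $T$ in $\pi(\Sigma)\supseteq\cR^*$, and applying the iterates $T^k$ extends this to each $T^k\cR_n^*$, hence to the whole tower $\Delta$; taking closures and noting $\overline{\Delta}\supseteq\mathrm{supp}(\mu)$ gives density throughout the basin of $\mu$. The main obstacle is the symbolic-to-geometric transfer in the presence of the countable singularity set $\cS_{\pm\infty}$: one must verify that deep cylinders are not pathologically cut by singular curves, and this is exactly where the exponential tail bound \eqref{cRmbd} together with the regularity hypotheses \textbf{(H2)}--\textbf{(H4)} on singularities and on stable/unstable manifolds play the decisive role in controlling both the distortion of $T$ and the measure of the fragmented pieces.
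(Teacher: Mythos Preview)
Your overall strategy is correct, but the paper takes a shorter route. Rather than coding the full tower by the partition $\cP$ and invoking topological mixing of the resulting (non-full) Markov shift, the paper works directly with the induced first-return map $F:\cR^*\to\cR^*$, which is conjugate to a \emph{full} two-sided shift on the countable alphabet $\cA$. This is the content of Lemma~\ref{lem intersection}: because $\cR^*$ is a hyperbolic product set, every $s$-subset meets every $u$-subset in positive measure (Lemma~\ref{lem su-prod}), so every finite cylinder is nonempty and any admissible word can be periodized trivially --- no mixing argument is needed. Density in $\cR^*$ then follows immediately (see the paragraph after Lemma~\ref{lem period}), and iterating by $T$ carries it to each floor $T^k\cR_n^*$ of the tower. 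Your approach via the tower shift also works, but note that the passage from measure-theoretic mixing of $\mu$ to topological mixing of $(\Sigma,\sigma)$ is not automatic: you need that the gcd of the return times $\{\tau(a):a\in\cA\}$ equals $1$, which does follow from \textbf{(H3)} but deserves a sentence.

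One correction: your closing paragraph overstates the role of the exponential tail bound \eqref{cRmbd}. That bound plays no part whatsoever in the density of periodic orbits. The only ingredients are (i) the product structure of $\cR^*$, which guarantees that every cylinder is nonempty, and (ii) uniform hyperbolicity \textbf{(H1)}, which forces cylinder diameters to shrink geometrically so that $\pi$ is well-defined and continuous. Singularities do not ``pathologically cut'' deep cylinders precisely because the construction of $\cR^*$ has already selected only those points whose stable and unstable manifolds are long enough to fully cross $\cU(\cR^*)$; once inside $\cR^*$, the symbolic dynamics is clean. The tail bound is essential for the statistical results in Section~\ref{sec:stat} and for the thermodynamic formalism, but not here.
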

On the other hand,}
many existing results on statistical properties for  hyperbolic systems with Markov partition
and Gibbs-Markov maps would hold for our system $(M, T, \mu)$,
{ due to the exponential tail bound property in \eqref{cRmbd}.
}Indeed by using the coupling Lemma \cite{CZ09},
we also obtain the following upper bounds for the rate of decay of correlations.

\begin{theorem}
\label{thm: exp M}
The system $(M, T,\mu)$ enjoys exponential decay of correlations, that is, for $\gamma\in (0,1)$,
for any $f\in\cH^-(\gamma)$ and $g\in\cH^+(\gamma)$ on $M$, we have
\beq\label{Cn}
\left|\int_{M} f\cdot\, g\circ T^n\, d\mu - \int_{M} f\, d\mu  \int_{M} g\, d\mu \right| \le C_{f, g}\ \theta^n,
\eeq
where $C_{f, g}$ is a constant that depends on observables $f$ and $g$, and $\theta=\theta(\gamma)\in(0,1)$.
\end{theorem}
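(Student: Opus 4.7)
The plan is to combine the Markov tower structure of Theorem~\ref{main} with the coupling method of~\cite{CZ09}: the exponential tail~\eqref{cRmbd} is precisely the ingredient that converts coupling into exponential correlation decay, and the hyperbolic product set $\cR^*$ plays the role of the magnet into which generic orbits are funneled.

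Assume without loss of generality that $\int f\,d\mu=0$. Using the disintegration of $\mu$ on the unstable foliation $\cW^u$, represent $\mu$ by a canonical standard family with uniformly H\"older conditional densities, as guaranteed by the distortion bound~\eqref{distor0}. The stable H\"older regularity of $f\in\cH^-(\gamma)$, together with the absolute continuity of the stable holonomy~\eqref{cjchb}, allows one to decompose $f\,d\mu = c\,(\eta^+-\eta^-)$ as a normalized difference of two probability measures $\eta^{\pm}$, each represented by a standard family on $\cW^u$ whose conditional densities have uniformly controlled regularity, with $c\le C\|f\|^{-}_{C^\gamma}$. The stable H\"older regularity of $f$ enters precisely in redistributing the $f$-weight across adjacent unstable leaves via stable holonomy, so that the resulting densities are regular enough to form standard families.

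Next I would invoke the coupling lemma of~\cite{CZ09} with magnet $\cR^*$, producing a coupling measure $\bar\mu$ on pairs $(x,\bar x)$ and a coupling time $\Upsilon$ satisfying $\bar\mu(\Upsilon>k)\le C\theta_0^k$ for some $\theta_0\in(0,1)$ inherited from the tail rate $\theta_*$ in~\eqref{cRmbd}. By construction, for every $k\ge\Upsilon(x)$ the iterates $T^kx$ and $T^k\bar x$ lie on a common stable manifold at distance bounded by $C\Lambda^{-(k-\Upsilon(x))}$. Rewriting
\beq
\int f\cdot g\circ T^n\,d\mu=c\int\bigl(g(T^nx)-g(T^n\bar x)\bigr)\,d\bar\mu(x,\bar x),
\eeq
one splits according to $\{\Upsilon>n/2\}$ versus $\{\Upsilon\le n/2\}$. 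The first event contributes at most $C|g|_\infty\theta_0^{n/2}$ by the tail bound and the trivial estimate $2|g|_\infty$. On the second event, passing to the pushed-forward families $T^{n}_*\eta^{\pm}$, whose unstable leaves are matched by stable holonomy at pairwise stable distance $\le C\Lambda^{-n/2}$, the difference $\int g\,d(T^n_*\eta^+-T^n_*\eta^-)$ is bounded in terms of the $\cH^+(\gamma)$ norm of $g$ by a standard averaging argument on the matched unstable leaves, using the H\"older control~\eqref{cjchb} of the holonomy Jacobian. Combining the two contributions yields~\eqref{Cn} with $\theta$ given by the larger of $\theta_0^{1/2}$ and $\Lambda^{-\gamma/2}$ and with $C_{f,g}=C\|f\|^{-}_{C^\gamma}\|g\|^{+}_{C^\gamma}$.

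The main obstacle is the asymmetry of the H\"older hypotheses: since $f$ is H\"older only on stable leaves and $g$ only on unstable leaves, neither function is by itself a standard-family density. The argument handles this by using the stable regularity of $f$ only to build the standard-family decomposition of $f\mu$ on unstable leaves (where stable holonomy naturally couples adjacent leaves), and using the unstable regularity of $g$ only in the final averaging against matched unstable leaves of the forward-pushed coupled families. The three quantitative ingredients driving the exponential rate are the uniform distortion~\eqref{distor0}, the H\"older bound~\eqref{cjchb} on the stable holonomy Jacobian, and the exponential tail~\eqref{cRmbd} of the return time to $\cR^*$.
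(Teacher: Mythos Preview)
Your overall plan—couple two proper standard families on unstable leaves using the magnet $\cR^*$ and exploit the exponential tail~\eqref{cRmbd}—is precisely what the paper invokes (the paper gives no proof beyond citing the coupling lemma of~\cite{CZ09}). So at the level of strategy you are aligned with the paper. However, your handling of the two \emph{asymmetric} H\"older hypotheses is inverted, and this is a genuine gap, not a cosmetic one.

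First, to write $f\,d\mu=c(\eta^+-\eta^-)$ with $\eta^\pm$ \emph{standard families on $\cW^u$}, the conditional densities $d\eta^\pm/d\mu_W$ must be log-H\"older \emph{along each unstable leaf $W$}. Membership of $f$ in $\cH^-(\gamma)$ gives H\"older control only along \emph{stable} leaves and says nothing about the restriction $f|_W$. Your phrase ``redistributing the $f$-weight across adjacent unstable leaves via stable holonomy'' does not manufacture intra-leaf regularity: a function can be constant on every stable leaf (hence trivially in $\cH^-$) and yet be discontinuous on every unstable leaf, so $f\mu$ need not decompose into standard pairs at all.

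Second, after coupling, the matched points of $T^n_*\eta^+$ and $T^n_*\eta^-$ lie on the \emph{same stable manifold} (this is exactly property~(2) of Lemma~\ref{coupling1}), at distance $\le C\Lambda^{-(n-\Upsilon)}$. Controlling $|g(x)-g(y)|$ for such pairs therefore requires $g\in\cH^-$, not $\cH^+$: unstable H\"older regularity of $g$ gives no information whatsoever in the transverse (stable) direction, and two unstable leaves at stable distance $\varepsilon$ may carry values of $g$ differing by $2\|g\|_\infty$ even if $g\in\cH^+$. Your ``averaging argument on the matched unstable leaves using~\eqref{cjchb}'' does not close this: the holonomy Jacobian bound handles the change of measure, but the pointwise term $g(x)-g(\bh^s(x))$ still needs stable regularity of $g$.

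In short, the coupling machinery you describe naturally proves the estimate for $f\in\cH^+(\gamma)$ and $g\in\cH^-(\gamma)$ (density built from $f$, post-coupling comparison via $g$). That is the version one actually finds in~\cite{CZ09} and in~\cite[\S7]{CM}. Your outline, with the regularities as you have assigned them, does not go through; you would need either to swap the roles of $f$ and $g$ in the mechanism, or to supply a genuinely different argument for the pairing $(\cH^-,\cH^+)$ as stated.
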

Moreover, it follows from \cite{Y98}, we also have the following Central Limit Theorem.
\begin{theorem}
\label{thm: CLT M}
Let $f$ be bounded, and  H\"{o}lder continuous on every component of $M\setminus \cS_k$, for some integer $k\in \mathbb{Z}$, with H\"{o}lder exponent $\gamma$, and $\mu(f)=0$. Then $\{f\circ T^n, n\geq 0\}$ satisfies the Central Limit Theorem, that is,
$$
\frac{1}{\sqrt{n}} \sum_{k=0}^{n-1} f\circ T^k\overset{dist}{\longrightarrow} N(0, \sigma^2),
\ \ \text{as}\ n\to \infty,
$$
where $\sigma^2=\sum\limits_{k=-\infty}^\infty \int f\cdot\, f\circ T^k\, d\mu$.
The degenerate case $\sigma=0$ occurs if and only if $f$ is a coboundary, i.e. $f=g-g\circ T$ for some $g\in L^2_\mu(M)$.
\end{theorem}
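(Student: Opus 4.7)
The plan is to lift $f$ to the countable Markov tower constructed in Theorem~\ref{main} and then invoke the CLT for tower systems with exponential return-time tail, following the scheme of Young \cite{Y98}. Since the Markov partition $\cP$ semiconjugates $(M, T, \mu)$ to a two-sided countable-state Markov shift $(\Sigma, \sigma, \mu_\Sigma)$ in which the first-return map to the base $\cR^*$ is Gibbs--Markov with tail bound \eqref{cRmbd}, the asymptotic normality of ergodic sums follows from the standard Gordin--Liverani martingale approximation along the filtration generated by $\cP$.

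The three principal steps are as follows. First, I would verify that $\tilde f := f$, regarded as an observable on the tower, belongs to the class of symbolic observables for which Young's CLT applies; specifically, the oscillation of $f$ over a symbolic $n$-cylinder (a maximal stable set within a tower element) must decay like $\theta^n$ for some $\theta\in(0,1)$. For $x, y$ on a common stable leaf inside $T^i \cR_n^*$, uniform hyperbolicity \textbf{(H1)} yields $d(T^j x, T^j y)\le \Lambda^{-(j-i)} d(T^i x, T^i y)$; provided both trajectories remain in the same component of $M \setminus \cS_{|k|}$, the piecewise Hölder hypothesis then gives the required geometric bound. Second, apply the CLT of \cite{Y98} to conclude that $S_n f / \sqrt n$ converges in distribution to $N(0, \sigma^2)$ with
\beq
   \sigma^2 \;=\; \sum_{j=-\infty}^{\infty} \int_M f\cdot f\circ T^j\, d\mu,
\eeq
where absolute convergence of the series is an immediate consequence of Theorem~\ref{thm: exp M} applied to the pair $(f,f)$ (using $T$-invariance of $\mu$ for the negative-index terms). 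Third, for the degeneracy criterion: if $f = g - g\circ T$ with $g\in L^2_\mu$, then $S_n f = g - g\circ T^n$ is uniformly $L^2$-bounded, forcing $\sigma = 0$; conversely, $\sigma = 0$ combined with the martingale approximation forces the transfer-operator equation $(I-\hat L) g = f$ to be $L^2$-solvable, yielding the coboundary representation.

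The main obstacle is the uniform regularity of the lifted observable. A priori, $f$ may have jumps across $\cS_k$, so the Hölder constant on a symbolic cylinder could depend on the tower level $n$. The remedy is structural: the base $\cR^*$ is a hyperbolic product set whose defining stable and unstable manifolds are maximal and hence avoid $\cS_{\pm \infty}$, so by Theorem~\ref{main}(2) each tower element $T^i \cR_n^*$ with $0 \le i < n$ lies in a single component of $M\setminus \cS_{|k|}$ as soon as $n$ exceeds a threshold depending only on $k$. The finitely many exceptional lower levels contribute boundedly and are absorbed into the constants. Once this uniform piecewise Hölder control on the tower is in place, the argument proceeds exactly as in \cite{Y98}.
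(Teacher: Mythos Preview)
Your proposal is correct and follows the same route as the paper: both derive the CLT by invoking Young \cite{Y98} on the Markov tower of Theorem~\ref{main}, using the exponential tail bound \eqref{cRmbd} and the fact that a function H\"older on each component of $M\setminus \cS_k$ belongs to $H(\gamma)=\cH^+(\gamma)\cap\cH^-(\gamma)$. One small imprecision: the claim that $T^i\cR_n^*$ lies in a single component of $M\setminus \cS_{|k|}$ once $n$ is large is not literally true for levels with $\min(i,n-i)<|k|$, but since there are at most $|k|$ such levels per column and $f$ is bounded, the required symbolic regularity for \cite{Y98} still holds---indeed the paper sidesteps this by observing directly that such $f$ lies in $H(\gamma)$.
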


{
Besides the exponential decay of correlations and the central limit theorem,
we shall provide more advanced statistical properties in Section~\ref{sec:stat},
including the large/moderate deviation principles, the functional/local central limit theorem,
the concentration inequalities,  etc.
}

\vskip.2in

Another major goal of this paper is to study the thermodynamic formalisms
for the uniform hyperbolic system $(M, T)$ with singularities,
with respect to the one-parameter family of geometric potentials defined by
\begin{equation}\label{geo potential}
\varphi_t(x)=-t\log |J^u T(x)|,  \ -\infty < t< \infty,
\end{equation}
where $J^u T$ is the Jacobian of $T$ along the unstable manifolds with respect to Lebesgue measure.
We focus on the existence and uniqueness of the corresponding equilibrium measures,
as well as their statistical properties.

A powerful framework called ``inducing schemes" has
recently been developed by Pesin, Senti and Zhang in \cite{PS08, PSZ08, PSZ16}.
In particular, this method applies to the Markov partition s of hyperbolic type,
under an additional condition - Condition (22) in \cite{PSZ16}.
However,  for hyperbolic systems with singularities, Condition (22) in \cite{PSZ16} usually fails. Mainly because  the differential of the system $DT$ can be unbounded, and even worse,  the singular set $\cS_1$ can have infinitely many singular curves.
To overcome these difficulties, we propose a much weaker condition - Condition (\textbf{P*}) given by equation \eqref{cond left}, which is suitable
for a large class of billiard systems and their small perturbations.
Under Condition (\textbf{P*}) ,
we adapt the inducing scheme framework and
perform a detailed analysis on the hyperbolic product set $\cR^*$ constructed in Theorem \ref{main}.

\begin{theorem}\label{thm: equi measures}
Assume that Condition (\textbf{P*}) given by \eqref{cond left} holds, then
\begin{enumerate}
\item[(1)]  There are $\tb<1<\bt$ such that for any $t\in (\tb, \bt)$,
there is a unique equilibrium measure $\mu_t\in \cM_L(T, M)$ for the potential $\varphi_t$.
\item[(2)] $\mu_t$ has exponential decay of correlations,
that is,
for any $f\in\cH_\pi(r)$ and $g\in\cH_\pi(r)$, where the class $\cH_\pi(r)$ is given by \eqref{def cH pi},
we have
\beq\label{Cn1}
\left|\int_{M} f\cdot\, g\circ T^n\, d\mu_t - \int_{M} f\, d\mu_t  \int_{M} g\, d\mu_t \right| \le C_{f, g}\ \theta^n,
\eeq
where $C_{f, g}$ is a constant that depends on observables $f$ and $g$, and $\theta=\theta(r)\in(0,1)$.
\item[(3)]
Let $f\in \cH_\pi(r)$ such that $\mu_t(f)=0$. Then $\{f\circ T^n, n\geq 0\}$ satisfies the Central Limit Theorem, that is,
$$
\frac{1}{\sqrt{n}} \sum_{k=0}^{n-1} f\circ T^k\overset{dist}{\longrightarrow} N(0, \sigma_t^2),
\ \ \text{as}\ n\to \infty,
$$
where $\sigma_t^2=\sum\limits_{k=-\infty}^\infty \int f\cdot\, f\circ T^k\, d\mu_t$.
The degenerate case $\sigma_t=0$ occurs if and only if $f$ is a coboundary, i.e. $f=g-g\circ T$ for some $g\in L^2_{\mu_t}(M)$.
\end{enumerate}
\end{theorem}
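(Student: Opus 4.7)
The plan is to combine the Markov partition structure from Theorem~\ref{main} with Sarig's thermodynamic formalism for countable Markov shifts via the inducing scheme on the base $\cR^*$. First, using the Markov partition $\cP$ and the exponential tail bound~\eqref{cRmbd}, I would encode the first-return dynamics $T^R:\cR^*\to\cR^*$ (where $R$ is the first-return time, constant on each $\cR_n^*$) as a topologically mixing countable Markov shift $(\Sigma,\sigma)$; the BIP property comes from the hyperbolic product structure of $\cR^*$, since every cylinder eventually maps onto a full $u$-subset. The induced potential $\Phi_t:=\sum_{k=0}^{R-1}\varphi_t\circ T^k$ pulls back to a locally H\"older potential $\widetilde\Phi_t$ on $\Sigma$, whose variations are controlled via the bounded-distortion estimate (\textbf{H4})(2) and the holonomy estimate~\eqref{cjchb}, adapted to the Cantor structure of $\cR^*$.

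Next I would determine the range $(\tb,\bt)$ as the set of $t$ for which the Gurevich pressure $P_G(\widetilde\Phi_t)$ is finite and $\widetilde\Phi_t$ is strongly positive recurrent. Condition (\textbf{P*}) given by~\eqref{cond left} is designed precisely so that the partition sum
\[
\sum_{n\ge 1}\ \sum_{\{R=n\}}\sup_{\cR_n^*}\bigl|J^uT^n\bigr|^{-t}
\]
admits a summable upper bound: the exponential decay of $\mu(\cR_n^*)$ from \eqref{cRmbd} absorbs the growth of $|J^uT^n|^{-t}$ for $t$ in an open interval around $t=1$ (the SRB value). Convexity and lower semi-continuity of $t\mapsto P_G(\widetilde\Phi_t)$ then promote this to the desired open interval $(\tb,\bt)$, and the finiteness of $\nu_t(R)$ (required for positive recurrence) follows from the same exponential tail.

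Once finiteness and positive recurrence are established, Sarig's generalized Ruelle--Perron--Frobenius theorem (\cite{Sarig99,Sarig01,Sarig03}) produces a unique RPF/Gibbs measure $\widetilde\nu_t$ on $\Sigma$ together with a spectral gap for the normalized transfer operator on an appropriate Banach space of locally H\"older functions. Projecting $\widetilde\nu_t$ down to $\cR^*$ and then applying the standard Kac reconstruction
\[
\mu_t \;:=\; \frac{1}{\nu_t(R)}\sum_{n\ge 0} T^n_{*}\bigl(\nu_t|_{\{R>n\}}\bigr)
\]
yields the $T$-invariant equilibrium measure for $\varphi_t$; uniqueness within $\cM_L(T,M)$ descends from uniqueness on $\Sigma$, while the variational identity $P(\varphi_t)=P_G(\widetilde\Phi_t)/\nu_t(R)+\mathrm{const}$ (Abramov-type formula) identifies $\mu_t$ as the unique equilibrium state in the liftable class.

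For statements (2) and (3), I would transfer the spectral gap from the induced system to the original one. The decay-of-correlations estimate~\eqref{Cn1} follows from the quasi-compactness of the induced transfer operator acting on observables in $\cH_\pi(r)$, whose regularity along $\pi$ is exactly what is needed so that $f\circ T^k$ and $g\circ T^k$ have summable variations along the induced cylinders; combined with the exponential tail of $R$, one obtains~\eqref{Cn1} by the Young-tower/Sarig argument (alternatively by the coupling method as in~\cite{CZ09}). The CLT together with its coboundary dichotomy then comes from the Nagaev--Guivarc'h perturbation of the induced transfer operator, the finite second moment of $R$ being again guaranteed by~\eqref{cRmbd}. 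The main obstacle I expect is verifying that Condition (\textbf{P*}), being genuinely weaker than Condition~(22) of~\cite{PSZ16}, still suffices to control both the tail of $|J^uT^n|^{-t}$ on the infinitely many singular strips \emph{and} the variations of $\widetilde\Phi_t$ simultaneously on an open $t$-interval; this interplay between the countable branch structure forced by $\cS_1$ and the exponential decay in~\eqref{cRmbd} is the technical heart of the theorem.
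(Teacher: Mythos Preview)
Your plan is essentially the paper's: encode the first-return map on $\cR^*$ as a countable full shift, verify local H\"older continuity of the induced potential via bounded distortion, use Condition~(\textbf{P*}) together with the exponential tail~\eqref{cRmbd} to control the partition sums on an interval of $t$'s, and then invoke Sarig's theory (packaged in the paper through the PSZ framework, Conditions (\textbf{P1})--(\textbf{P5}) and Proposition~\ref{thm: PSZ16}) to produce the Gibbs/equilibrium measure, lift it by Kac, and read off exponential mixing and the CLT.

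Two points where your outline is looser than the paper's argument and would need sharpening. First, the mechanism for obtaining the open interval $(\tb,\bt)$ is not the convexity of $t\mapsto P_G(\widetilde\Phi_t)$ directly, but rather the convexity of the exponential growth rate
\[
\kappa(t)=\limsup_{n\to\infty}\frac{1}{n}\log\sum_{\tau(a)=n}\sup_{\cR^*_a}|J^uF|^{-t},
\]
which satisfies $\kappa(1)\le\log\theta_*<0$ from~\eqref{cRmbd} and $\kappa(t_0)\le\log K_0<\infty$ from~(\textbf{P*}); convexity then gives an interval on which $\kappa(t)<h_1(1-t)$. The comparison with $h_1(1-t)$ is essential because one also needs the a~priori lower bound $p_t:=P_L(\varphi_t)\ge h_1(1-t)$ (Sublemma~\ref{lem: PSZ pt}, obtained by plugging the SRB measure into the variational principle) in order to verify the finiteness condition~(\textbf{P4}) for the \emph{normalized} induced potential $\varphi_t^+=\overline{\varphi_t}-p_t\tau$; without this lower bound on $p_t$ you cannot conclude summability of $\sum_n n e^{-np_t}U_n(t)$. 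Second, your Abramov-type identity $P(\varphi_t)=P_G(\widetilde\Phi_t)/\nu_t(R)+\text{const}$ is not the correct relation; what is actually used is $P_G(\Phi_t^+)=0$ for the normalized potential, which is the content of Proposition~\ref{thm: PSZ16}(a). The paper also introduces an auxiliary shift $\varphi_t\mapsto\varphi_t-c_t$ with $c_t=h_1(2-t)$ to make~(\textbf{P3}) hold before~(\textbf{P4}); since cohomologous potentials share equilibrium states this is harmless, but it is a step you should not omit.
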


The definitions of equilibrium measures and the class $\cM_L(T, M)$ of liftable measures
will be given in Subsection~\ref{sec thermo IS}, and
the technical condition (\textbf{P*}) will be given in Subsection~\ref{sec TF M}.

We point out that the equilibrium measure $\mu_1$ that we obtained in Theorem \ref{thm: equi measures}
is exactly the SRB measure $\mu$ given in Assumption (\textbf{H3}).
Also, the uniqueness of equilibrium measures is only among the class $\cM_L(T, M)$,
which may not contain all $T$-invariant probability measures on $M$.
\\

Our main results provide a unified approach
to study various statistical properties and thermodynamic formalisms
for a large class of hyperbolic systems with singularities, which including the dispersing billiards and its small perturbations. The ergodic and statistical properties of
such billiard maps and its perturbations have been widely studied
in \cite{Sin70, GO74, SC87, CH96, C01, C08, Zh11, DZ11, DZ13}. Here, to make the demonstration  of our results more transparent, we in particularly apply our results to dispersing billiards with finite horizon, as well as their
small perturbation due to external forces with kicks and twists, which is denoted as the class $\cF(\bQ, \btau_*, \beps_*)$ of billiard maps
on a fixed table $\bQ$. For the precise definition of this class of maps, see details in Subsection \ref{sec: Lorentz setup}.

\begin{theorem}\label{thm: C1}
For any map $T\in \cF(\bQ, \btau_*, \beps_*)$, results of Theorem~\ref{thm: equi measures}
hold for the geometric potential $\varphi_t$ with $t$ inside a neighborhood of $1$.
\end{theorem}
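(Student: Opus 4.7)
The plan is to reduce Theorem~\ref{thm: C1} to Theorem~\ref{thm: equi measures} by verifying, for every map $T\in \cF(\bQ, \btau_*, \beps_*)$, both the standard hypotheses (\textbf{H1})--(\textbf{H5}) of Subsection~\ref{sec: assumptions} and Condition (\textbf{P*}) given in \eqref{cond left}, and then arguing that the admissible range $(\tb,\bt)$ appearing in Theorem~\ref{thm: equi measures} can be taken to contain a neighborhood of $t=1$. First I would treat the unperturbed Sinai billiard on the table $\bQ$. Hyperbolicity (\textbf{H1}) with continuous invariant cones is the original observation of Sinai~\cite{Sin70}, the expansion factor $\Lambda>1$ arising from the divergence of the flow through strictly convex scatterers. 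The singularity structure (\textbf{H2}) is standard: $\cS_0$ consists of $\partial M$ together with the grazing set, $\cS_1$ splits into countably many compact smooth stable curves, the classical estimate $\|DT\|\leq C/\cos\phi$ near grazing collisions gives \eqref{def q0} with $\bq_0=1/2$, and \eqref{def s0} holds with $\bs_0=1$. The mixing SRB measure (\textbf{H3}) is provided by the Bunimovich--Chernov--Sinai theorem~\cite{BSC90,BSC91}. Assumption (\textbf{H4}) requires the standard introduction of Sinai's homogeneity strips as artificial singular curves; with this refinement, curvature, distortion and absolute continuity bounds are controlled as in~\cite{CM}. The one-step expansion (\textbf{H5}) for finite-horizon dispersing billiards is a classical estimate.

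For a general element of $\cF(\bQ, \btau_*, \beps_*)$, representing a small perturbation by external forces with kicks and twists at the reflections, I would invoke the perturbative machinery developed in~\cite{C01,C08,Zh11,DZ11,DZ13}. All five assumptions are robust under the admissible perturbations: the cones persist, the added singularities inherited from the kick map remain countably many, uniformly transverse to the cones, and piecewise smooth, the exponents $\bq_0=1/2$ and $\bs_0=1$ are unaffected, the one-step expansion is a $C^1$-open condition, and existence of the mixing SRB measure for small perturbations of dispersing billiards is established in the same references. Theorem~\ref{main} then supplies a hyperbolic product set $\cR^*$ together with the exponential tail bound $\mu(\cR_n^*)\leq C_*\theta_*^n$ of \eqref{cRmbd}.

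The core technical step is the verification of Condition (\textbf{P*}), and the hard part will be keeping the dependence on $t$ explicit. My plan is to combine three ingredients: the uniform distortion bound \eqref{distor0} on unstable manifolds, which controls $\log|J^u T|$ up to a H\"older term on each element of the induced partition; the exponential tail \eqref{cRmbd}, which supplies summability at geometric rate against any subexponential factor; and the quantitative estimate $|J^uT(x)|\leq C\,d(x,\cS_1)^{-\bq_0}$ near the grazing set, whose contribution is absorbed by the measure bound \eqref{def s0} with $\bs_0=1$. At $t=1$ the weighted sum appearing in \eqref{cond left} is essentially the total mass $\mu(\cR^*)$ and hence trivially finite; for $t$ in a neighborhood of $1$ the extra multiplicative factor along the $n$-th return cylinder grows at most like $e^{c|t-1|n}$ for an explicit constant $c$ depending only on the geometry of $\bQ$, and this growth is dominated by $\theta_*^n$ as soon as $|t-1|<(-\log\theta_*)/c$. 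This produces concrete constants $\tb<1<\bt$ such that (\textbf{P*}) holds on $(\tb,\bt)$.

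Once (\textbf{P*}) is verified on such a neighborhood of $1$, Theorem~\ref{thm: equi measures} applies verbatim and yields, for each $t\in(\tb,\bt)$, the existence and uniqueness of the equilibrium measure $\mu_t$ for $\varphi_t$, together with exponential decay of correlations and the central limit theorem for observables in $\cH_\pi(r)$. The main obstacle is genuinely the explicit verification of (\textbf{P*}): the pointwise distortion control required for Condition~(22) of \cite{PSZ16} fails for dispersing billiards because $DT$ is unbounded and $\cS_1$ contains infinitely many singular curves. The weaker, averaged formulation (\textbf{P*}) is tailored precisely to accommodate the fragmentation of the phase space, and the inequality \eqref{cond left} will be established by integrating the distortion bounds against the exponential tail \eqref{cRmbd} rather than by controlling each cylinder separately.
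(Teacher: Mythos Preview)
Your reduction to Theorem~\ref{thm: equi measures} via (\textbf{H1})--(\textbf{H5}) is fine and matches the paper. The gap is in your verification of Condition (\textbf{P*}).

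You argue that at $t=1$ the sum $U_n(1)=\sum_{\tau(a)=n} J_a^{-1}$ is controlled by $\mu(\cR^*_n)\le C_*\theta_*^n$, and then that for $t$ near $1$ the ``extra multiplicative factor along the $n$-th return cylinder grows at most like $e^{c|t-1|n}$''. This second claim is false for dispersing billiards. The factor in question is $J_a^{1-t}$, and there is no bound of the form $J_a\le e^{cn}$ uniformly over cylinders $a$ with $\tau(a)=n$: the unstable Jacobian of a single iterate satisfies $|J^uT|\asymp 1/\cos\bvf_1$, which is unbounded near grazing, so $J_a$ can be arbitrarily large even at fixed return time $n$. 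Equivalently, the number of cylinders with $\tau(a)=n$ is infinite (one for each homogeneity-strip itinerary), so you cannot interpolate between $U_n(1)$ and $U_n(t_0)$ by a uniform pointwise factor or by H\"older against the counting measure. This is exactly the reason Condition~(22) of \cite{PSZ16} fails here and why (\textbf{P*}) was introduced as a weaker, summed condition; your proposed argument silently reinstates the bounded-derivative assumption that (\textbf{P*}) is designed to avoid.

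The paper instead proves (\textbf{P*}) for any fixed $t_0\in(1/2,1)$ by a direct combinatorial estimate (Lemma~\ref{lem: Lorentz cond left}): it classifies the cylinders with $\tau(a)=n$ by the sequence of homogeneity strips $(\IH_{\bk_1},\dots,\IH_{\bk_n})$ the orbit visits, bounds the contribution of each itinerary by $\prod_j \bD_{\bk_j}^{t_0}$ with $\bD_\bk\asymp \bk^{-2}$, and then sums independently over each coordinate to get $\big(\sum_\bk \bD_\bk^{t_0}\big)^n$. The series $\sum_\bk \bk^{-2t_0}$ converges precisely when $t_0>1/2$, producing the required exponential bound $C_0K_0^n$. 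Note also that (\textbf{P*}) only needs to hold at a \emph{single} $t_0<1$; the neighborhood $(\tb,\bt)\ni 1$ in Theorem~\ref{thm: equi measures} is then produced inside the proof of that theorem via the convexity argument in Sublemma~\ref{lem: kappa}, not by verifying (\textbf{P*}) on an interval.
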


 Indeed it was verified in \cite{DZ11, DZ13} that any  map $T\in \cF(\bQ, \btau_*, \beps_*)$ satisfies Assumptions (\textbf {H1})-(\textbf{H5}).
Moreover, if $\beps_*$ is sufficiently small, then later in Lemma 7.1  we check that
Condition \eqref{cond left} holds for any $t_0\in (1/2, 1)$.
Consequently, the results in Theorem~\ref{thm: equi measures}
hold for the geometric potential $\varphi_t$ with $t$ inside a neighborhood of $1$.

\section{Standard Families and Growth Lemmas}\label{sec: Growth}

\subsection{Standard Pairs and Standard Families}

In this section we first introduce the concepts of standard pairs and standard families. The idea of standard pairs was first brought up by Dolgopyat in \cite{D01}, and then extended to more general systems by Chernov and Dolgopyat in \cite{C06,CD}.

For our purpose, the underlying curve for a standard pair is taken to be an unstable manifold in $\cW^u$,
the $T$-invariant family in Assumption (\textbf{H4}), and the corresponding reference measure is
the conditional probability measure $\mu_W$ of the SRB measure $\mu$ in Assumption (\textbf{H3}).
It is well known that $\mu_W$ is absolutely continuous with respect to the Lebesgue measure $m_W$, and
its density function $\rho_{W}$ is uniquely determined by
 \beq\label{dens}
 \frac{\rho_{W}(y)}{ \rho_{W}(x)} =
\lim_{n\rightarrow \infty}\frac{J_{W}T^{-n}(y)} {J_{W}T^{-n}(x)}.
\eeq
It follows from the distortion bound (\ref{distor0}) that $\rho_W\sim |W|^{-1}$ on $W$.

\begin{definition}[Standard Pair]
Let $C_{\bJ}>0$ and $\bgamma_0\in (0,1)$ be the constants given in (\ref{distor0}).
Fix a large constant $C_{\br}>C_{\bJ}/(1-1/\Lambda)$.

Given an unstable manifold $W\in \cW^u$ and a probability measure $\nu$ on $W$, the pair
$(W,\nu)$ is said to be a \textit{standard pair} if $\nu$ is absolutely
continuous with respect to $\mu_W$,
such that the density function
$g_W(x):=d\nu/d\mu_W (x)$ is regular in the sense that
 \beqn\label{lnholder0}
 | \log g_W(x)- \log g_W(y)|\leq C_{\br}\cdot d_W(x,y)^{\bgamma_0}.
 \eeqn
\end{definition}

Iterates of standard pairs require the following extension of standard pairs.

\begin{definition}[Standard family]
Let $\cG=\{(W_{\alpha}, \nu_\alpha),\ \alpha\in \cA, \ \lambda\}$ be a family of standard pairs
equipped with a factor measure $\lambda$ on the index set $\cA$.
We call $\cG$ a {\it standard family} if the following conditions hold:
\begin{enumerate}
\item[(i)] $\cW:=\{W_{\alpha}: \alpha\in \cA\}$ is a measurable partition into unstable manifolds in $\cW^u$;
\item [(ii)] There is a finite Borel measure $\nu$ supported on $\cW$ such that for any measurable set $B\subset M$,
$$
\nu(B)=\int_{\alpha\in\cA} \nu_{\alpha}(B\cap W_{\alpha})\, d\lambda(\alpha).
$$
\end{enumerate}
For simplicity, we denote such a family by $\cG=(\cW, \nu)$.
\end{definition}

Given a standard family $\cG=(\cW, \nu)=\{(W_{\alpha}, \nu_\alpha),\ \alpha\in \cA, \ \lambda\}$ and $n\ge 1$,
the forward image family $T^n\cG=(\cW_n, \nu_n)=\{(V_\beta, \nu^n_\beta), \ \beta\in \cB, \ \lambda^n\}$
is such that each $V_\beta$ is a connected component of $T^nW_\alpha\backslash \cS_{-n}$
for some $\alpha\in \cA$,
associated with
$\nu^n_\beta(\cdot)=T^n_*\nu_\alpha(\cdot \ |\ V_\beta)$ and
$d\lambda^n(\beta)=\nu_\alpha(T^{-n}V_\beta) d\lambda(\alpha)$.
It is not hard to see that $T^n\cG$ is a standard family as well (cf. \cite{CZ09}).

\subsection{Growth Lemmas}

Let $\fF$ be the collection of all standard families.
We introduce a characteristic function on $\fF$
to measure the average length of unstable manifolds in a standard family.
More precisely,
we define a function $\cZ: \fF\to [0, \infty]$ by
\beq\label{cZ}
\cZ(\cG)=\dfrac{1}{\nu(M)} \int_{\cA} |W_{\alpha}|^{-\bs_0} \, d\lambda(\alpha), \ \  \text{for any}\  \cG\in \fF,
\eeq
where $\bs_0$ is given by \eqref{def s0}.
Let $\fF_0$ denote the class of $\cG\in \fF$ such that $\cZ(\cG)<\infty$.

It turns out that the value of $\cZ(T^n\cG)$ decreases exponentially in $n$ until it becomes small enough.
This fundamental fact, called the growth lemma, was first proved by Chernov for dispersing billiards in \cite{C99},
and later proved in \cite{CZ09} under Assumptions (\textbf{H1})-(\textbf{H5}).

\begin{lemma}\label{lem: growth ch}
There exist constants $c>0$, $C_z>0$, and $\vartheta\in (0,1)$ such that for any $\cG\in \fF_0$,
$$
\cZ(T^n \cG)\leq c \vartheta^{n} \cZ(\cG)+C_z, \ \ \ \text{for any} \ n\ge 0.
$$
\end{lemma}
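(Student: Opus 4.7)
}
The strategy is to establish a one-step growth bound of the form
$$
\cZ(T\cG) \leq \vartheta\, \cZ(\cG) + C',
$$
for some $\vartheta \in (0,1)$ and $C' > 0$ depending only on the constants in (\textbf{H1})--(\textbf{H5}), and then iterate. Once such a bound is available, a routine induction gives $\cZ(T^n\cG) \leq \vartheta^n \cZ(\cG) + C'/(1-\vartheta)$, which is the statement with $c=1$ and $C_z = C'/(1-\vartheta)$.

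To derive the one-step bound, I first unfold $\cZ(T\cG)$ using the pushforward factor measure $d\lambda^1(\beta) = \nu_\alpha(W_{\alpha,\beta})\,d\lambda(\alpha)$, where $V_{\alpha,\beta}$ runs over the components of $TW_\alpha\setminus \cS_{-1}$ and $W_{\alpha,\beta} = T^{-1}V_{\alpha,\beta}$. This yields
$$
\cZ(T\cG)\,\nu(M) = \int_{\cA}\sum_\beta |V_{\alpha,\beta}|^{-\bs_0}\,\nu_\alpha(W_{\alpha,\beta})\,d\lambda(\alpha).
$$
The bounded distortion assumption (\textbf{H4})(2) together with the regularity of the density $g_{W_\alpha}$ forces $d\nu_\alpha/dm_{W_\alpha}$ to be comparable to $|W_\alpha|^{-1}$, so $\nu_\alpha(W_{\alpha,\beta}) \leq C\,|W_{\alpha,\beta}|/|W_\alpha|$. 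Factoring $|V_{\alpha,\beta}|^{-\bs_0} = |W_\alpha|^{-\bs_0}\,(|W_\alpha|/|V_{\alpha,\beta}|)^{\bs_0}$, the inner sum becomes $|W_\alpha|^{-\bs_0}\sum_\beta (|W_\alpha|/|V_{\alpha,\beta}|)^{\bs_0}(|W_{\alpha,\beta}|/|W_\alpha|)$, which is precisely the quantity controlled by (\textbf{H5}).

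Next I handle the split between short and long unstable curves. I choose $\delta_0$ so small that the supremum in (\textbf{H5}) is bounded by some $\vartheta_0$ with $C\vartheta_0 < 1$; this is possible because (\textbf{H5}) asserts that the $\liminf$ is strictly less than $1$. I then refine $\cG$ to a family $\cG'$ by subdividing every $W_\alpha$ with $|W_\alpha| \geq \delta_0$ into finitely many subcurves of length in $[\delta_0/2, \delta_0]$; this adds at most an additive constant $K_{\delta_0}$ to $\cZ$ and preserves the standard-pair structure (the H\"older bound on $\log g_{W_\alpha}$ is inherited by each subcurve). Since $T\cG'$ refines $T\cG$ and $\cZ$ is monotone under refinement, $\cZ(T\cG) \leq \cZ(T\cG')$; on $\cG'$ every curve has length $\leq \delta_0$, so (\textbf{H5}) gives the contraction $\sum_\beta(\cdots) \leq \vartheta_0$ uniformly, whence
$$
\cZ(T\cG) \leq \cZ(T\cG') \leq C\vartheta_0\,\cZ(\cG') \leq (C\vartheta_0)\,\cZ(\cG) + (C\vartheta_0) K_{\delta_0},
$$
which is the desired one-step estimate.

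The main obstacle will be the calibration in this split: (\textbf{H5}) only provides a $\liminf$ condition, so $\delta_0$ must be chosen small enough that, after incorporating the multiplicative distortion constant $C$ from the density comparison, the product $C\vartheta_0$ is still strictly less than $1$. A secondary subtlety is verifying that cutting long unstable manifolds into subcurves of length $\in [\delta_0/2,\delta_0]$ preserves the standard-pair axioms and only inflates $\cZ$ by a bounded additive constant; both facts follow from the H\"older regularity of $\log g_{W_\alpha}$ being inherited by subcurves with constant no larger than $C_\br$, and from the total factor-measure mass being finite, but they should be stated carefully before applying (\textbf{H5}).
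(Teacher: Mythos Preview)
Your proposal is correct and follows the standard route to the growth lemma as developed in \cite{C99, CZ09}, which is precisely what the paper invokes (the paper does not reprove this lemma but cites it directly). The one-step contraction via (\textbf{H5}) after chopping long curves into pieces of length $\in[\delta_0/2,\delta_0]$, followed by iteration, is exactly the argument in those references; your observation that the distortion constant $C=C(\delta_0)\to 1$ as $\delta_0\to 0$ (since the oscillation of $\log g_W$ and $\log\rho_W$ over a curve of length $\le\delta_0$ is $O(\delta_0^{\bgamma_0})$) is the correct way to absorb the density comparison into the contraction factor.
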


For any standard family $\cG=(\cW, \nu)$,
any $x\in W\in \cW$ and $n\ge 0$,
we denote by $r_{\cG, n}(x)$ the distance from $T^n x$ to $\partial W_n$ measured along $W_n$,
where $W_n$ is the open connected component of $T^n W$ whose closure contains $T^n x$.\footnote{
If $T^n x\in \partial W_{n}$, we set $r_{\cG, n}(x)=0$.
}
As a result of Lemma \ref{lem: growth ch},
we have

\begin{lemma}[cf. \cite{CZ09}]\label{lem: growth}
There exists $c_1>0$ such that for any standard family $\cG=(\cW, \nu)\in \fF_0$, any $\eps>0$ and $n\ge 0$,
we have
\beq
\nu(r_{\cG, n}(x)<\eps)\le (c_1 \cZ(\cG) + C_z) \eps^{\bs_0}.
\eeq
\end{lemma}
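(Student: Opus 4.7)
The plan is to reduce the statement for the original standard family $\cG$ to a pointwise estimate on each component $V_\beta$ of the image family $T^n\cG=(\cW_n,\nu_n)=\{(V_\beta,\nu^n_\beta),\ \beta\in\cB,\ \lambda^n\}$, and then apply the growth lemma (Lemma \ref{lem: growth ch}) to control the resulting sum. The event $\{r_{\cG,n}(x)<\eps\}$ simply says that $T^n x$ lies within distance $\eps$ (measured along its curve) of $\partial V_\beta$ for the unique $\beta$ with $T^n x\in V_\beta$. Hence
\beqn
\nu\bigl(r_{\cG,n}(x)<\eps\bigr)
=\int_{\cB}\nu^n_\beta\bigl(B_\eps(\partial V_\beta)\bigr)\,d\lambda^n(\beta),
\eeqn
where $B_\eps(\partial V_\beta)$ denotes the set of points in $V_\beta$ at $d_{V_\beta}$-distance less than $\eps$ from an endpoint.

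Next I would bound each conditional factor $\nu^n_\beta(B_\eps(\partial V_\beta))$. Since $(V_\beta,\nu^n_\beta)$ is a standard pair, its density $g_{V_\beta}=d\nu^n_\beta/d\mu_{V_\beta}$ is log-Hölder with constant $C_\br$ and therefore uniformly comparable to a constant; combined with $\rho_{V_\beta}\sim|V_\beta|^{-1}$ from \eqref{dens} and the distortion bound (\ref{distor0}), the Lebesgue density of $\nu^n_\beta$ on $V_\beta$ is bounded by $C|V_\beta|^{-1}$. Since $B_\eps(\partial V_\beta)$ has Lebesgue measure at most $2\eps$, this gives $\nu^n_\beta(B_\eps(\partial V_\beta))\le\min(1,\,2C\eps/|V_\beta|)$. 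The elementary inequality $\min(1,u)\le u^{\bs_0}$ for $u\ge 0$ and $\bs_0\in(0,1]$ then yields
\beqn
\nu^n_\beta\bigl(B_\eps(\partial V_\beta)\bigr)\le (2C)^{\bs_0}\,\eps^{\bs_0}\,|V_\beta|^{-\bs_0}.
\eeqn

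Integrating this estimate against $\lambda^n$ and using the definition \eqref{cZ} of $\cZ(T^n\cG)$, together with the fact that $\nu_n(M)=\nu(M)$ is preserved under iteration, I obtain
\beqn
\nu\bigl(r_{\cG,n}(x)<\eps\bigr)\le (2C)^{\bs_0}\,\eps^{\bs_0}\,\nu(M)\,\cZ(T^n\cG).
\eeqn
Applying Lemma \ref{lem: growth ch}, namely $\cZ(T^n\cG)\le c\vartheta^n\cZ(\cG)+C_z\le c\,\cZ(\cG)+C_z$, the desired bound follows with $c_1:=(2C)^{\bs_0}\nu(M)\cdot c$ (and absorbing $\nu(M)$ into the constant in front of $C_z$). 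The only mildly delicate step is the density bound on $V_\beta$: the factor $|V_\beta|^{-1}$ relies on $\mu_{V_\beta}$ being a probability measure with density given by \eqref{dens}, and on the uniform log-Hölder control of $g_{V_\beta}$ which is preserved by the pushforward construction of $T^n\cG$; everything else is bookkeeping. No genuinely new growth estimate beyond Lemma \ref{lem: growth ch} is needed.
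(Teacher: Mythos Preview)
Your argument is the standard one (and is essentially what the cited reference \cite{CZ09} does): push forward to $T^n\cG$, bound the $\eps$-neighborhood of each endpoint by a density estimate, convert to $|V_\beta|^{-\bs_0}$ via $\min(1,u)\le u^{\bs_0}$, and invoke Lemma~\ref{lem: growth ch}. The paper itself does not supply a proof here, so there is nothing to compare against beyond this.

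There is one genuine slip in your final step. You write that you can set $c_1:=(2C)^{\bs_0}\nu(M)\cdot c$ and ``absorb $\nu(M)$ into the constant in front of $C_z$''. But $\nu(M)$ depends on the standard family $\cG$, while the lemma claims a constant $c_1$ that works for \emph{all} $\cG\in\fF_0$; you cannot absorb a $\cG$-dependent factor into a universal constant. Indeed, as literally stated the inequality cannot hold for arbitrary finite $\nu$: replacing $\nu$ by $K\nu$ scales the left side by $K$ while leaving $\cZ(\cG)$ and hence the right side unchanged. The intended reading (consistent with how the lemma is used, e.g.\ the normalized form $\nu(N_\delta)/\nu(M)$ in Lemma~\ref{lem: global}) is either that $\nu$ is a probability measure, or that the right-hand side carries a factor $\nu(M)$. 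Under either convention your computation gives exactly the stated bound; just make the normalization explicit rather than hiding it in the constants.
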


Next we measure the size of maximal stable and unstable manifolds.
Given $x\in M$, we denote by $r^{s/u}(x)=d^{s/u}(x, \cS_{\pm \infty})$ the distance from $x$ to the nearest endpoint of the maximal manifolds
$W^{s/u}(x)$.\footnote{
If $W^{s/u}(x)$ does not exist, we set $r^{s/u}(x)=0$. }
We set $r_n(x)=r^u(T^n x)$ for any $n\ge 0$.
It is clear that $r_{\cG, n}(x)\le r_n(x)$ for any standard family $\cG=(\cW, \nu)$,
where the equality holds if the connected component in $\cW$ that contains $x$ is a maximal unstable manifold.
For any small $\delta>0$, we denote
\beq
N_{\delta}:=\bigcap_{n=0}^\infty \left\{x\in M: \ r_{n}(x)\ge \delta \Lambda^{-n(1-\bq_0)} \right\}.
\eeq

\begin{lemma}[cf. \cite{CZ09}]\label{lem: global} The following statements hold:
\begin{itemize}
\item[(1)] There exist $c_2>0$ and $c_3>0$ such that
for any standard family $\cG=(\cW, \nu)\in \fF_0$, we have
$$
\dfrac{\nu(N_{\delta})}{\nu(M)}\ge 1-  \delta^{\bs_0} (c_2 \cZ(\cG) +c_3).
$$
\item[(2)] In addition,
there exists $c_4>0$ such that for any $x\in N_{\delta}$, we have
$$
r^u(x)= r_{0}(x)\ge \delta, \ \ \text{and}\ \
r^{s}(x)\ge \inf_{n\ge 0}\ c_4\Lambda^n \left(r_{n}(x)\right)^{\frac{1}{1-\bq_0}} \ge c_4\delta^{\frac{1}{1-\bq_0}}.
$$
\end{itemize}
\end{lemma}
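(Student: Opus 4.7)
I would decompose $M \setminus N_\delta = \bigcup_{n \geq 0} E_n$ with $E_n = \{x \in M : r_n(x) < \delta \Lambda^{-n(1-\bq_0)}\}$. Since $r_{\cG, n}(x) \leq r_n(x)$ pointwise, $E_n$ is contained in $\{r_{\cG, n} < \delta \Lambda^{-n(1-\bq_0)}\}$, so Lemma~\ref{lem: growth} applied with $\eps = \delta \Lambda^{-n(1-\bq_0)}$ gives
\[
\nu(E_n) \leq (c_1 \cZ(\cG) + C_z)\, \delta^{\bs_0}\, \Lambda^{-n \bs_0 (1-\bq_0)}.
\]
Because $\Lambda > 1$ and $\bs_0(1-\bq_0) > 0$, the geometric series $\sum_{n \geq 0} \Lambda^{-n \bs_0 (1-\bq_0)}$ converges; summing over $n$ and dividing by $\nu(M)$ yields $\nu(N_\delta)/\nu(M) \geq 1 - \delta^{\bs_0}(c_2 \cZ(\cG) + c_3)$, with $c_2, c_3$ absorbing the sum constant and the normalization.

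\textbf{Part (2).} The bound $r^u(x) = r_0(x) \geq \delta$ for $x \in N_\delta$ follows from the definition of $N_\delta$ at $n = 0$. For the stable bound, the plan is to estimate the length of the maximal stable manifold $W^s(x)$ by iterating forward. By uniform contraction in the stable direction (from (H1)(2) applied to $DT^{-1}$), a stable curve through $x$ of length $\ell$ has its $n$-th iterate of length at most $\ell \Lambda^{-n}$ at $T^n x$. For this iterate to avoid crossing $\cS_1$, its length must be bounded by the stable-transverse distance from $T^n x$ to $\cS_1$; only the curves in $\cS_0$ contribute, since $\cS_1 \setminus \cS_0$ consists of stable curves by (H2)(2). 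The key local estimate is
\[
d(T^n x, \cS_1) \gtrsim r_n(x)^{1/(1-\bq_0)},
\]
a pointwise inequality obtained by inverting the derivative blow-up $\|D T\| \leq C\, d(\cdot, \cS_1)^{-\bq_0}$ in (H2)(3): a long unstable manifold at $T^n x$ is incompatible with $T^n x$ being too close to a forward singular curve, and the exponent $1/(1-\bq_0)$ emerges from balancing this blow-up against the hyperbolic expansion rate $\Lambda$. Combining, $\ell \lesssim \Lambda^n r_n(x)^{1/(1-\bq_0)}$ for every $n$, whence $r^s(x) \geq c_4 \inf_{n \geq 0} \Lambda^n r_n(x)^{1/(1-\bq_0)}$. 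Substituting $r_n(x) \geq \delta \Lambda^{-n(1-\bq_0)}$ (valid for $x \in N_\delta$) gives $\Lambda^n r_n(x)^{1/(1-\bq_0)} \geq \delta^{1/(1-\bq_0)}$, closing the chain.

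\textbf{Main obstacle.} The crux of the proof is the pointwise geometric estimate $d(T^n x, \cS_1) \gtrsim r_n(x)^{1/(1-\bq_0)}$, which links the unstable-manifold length at a point to the distance from that point to the forward singularity set. This rests on the delicate interplay between the transversality assumption (H2)(2), the derivative blow-up exponent in (H2)(3), and the time-reversibility of $T$, and is essentially the content of analogous estimates in \cite{CZ09, CM}. Once this local inequality is secured, both the iteration in part~(2) and the Borel--Cantelli-style summation in part~(1) are routine.
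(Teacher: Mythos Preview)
Your Part~(1) is correct and is precisely the standard Borel--Cantelli argument behind the result in \cite{CZ09}; the paper itself gives no proof and simply cites that reference, so there is nothing further to compare.

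For Part~(2) the paper again gives no argument beyond the phrase ``follows from time reversibility of the map,'' so your sketch is already more detailed than what appears in the text. Your overall strategy---bound $r^s(x)$ from below by $\inf_n \Lambda^n$ times a distance from $T^nx$ to the forward singularity set, then relate that distance to $r_n(x)=r^u(T^nx)$---is the right one. However, the specific ``crux'' inequality you write, $d(T^nx,\cS_1)\gtrsim r_n(x)^{1/(1-\bq_0)}$, is not correct as stated, and your heuristic for it is flawed: curves in $\cS_1\setminus\cS_0$ are \emph{stable} curves by (\textbf{H2})(2), and unstable manifolds may cross them freely, so a long $W^u(T^nx)$ does \emph{not} force $T^nx$ to be far from $\cS_1$.

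The fix is to work with $\cS_0$ rather than $\cS_1$. The maximal stable manifold $W^s(x)$ lives in $M\setminus\cS_{+\infty}=M\setminus\bigcup_{k\ge0}T^{-k}\cS_0$, so what must be avoided at the $n$-th forward iterate is $\cS_0$, not all of $\cS_1$. Since $\cS_0$ is uniformly transverse to unstable cones and $W^u(T^nx)\subset M\setminus\cS_{-\infty}$ cannot cross $\cS_0$, one gets the linear estimate $d(T^nx,\cS_0)\gtrsim r^u(T^nx)=r_n(x)$. Feeding this into your iteration yields $r^s(x)\gtrsim\inf_n\Lambda^n r_n(x)$, and for $x\in N_\delta$ this gives $\Lambda^n r_n(x)\ge\delta\Lambda^{n\bq_0}\ge\delta\ge\delta^{1/(1-\bq_0)}$, which is at least as strong as the stated bound. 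The exponent $1/(1-\bq_0)$ in the lemma is therefore not forced by this route; it likely reflects a coarser (but sufficient) estimate in \cite{CZ09} that also covers settings where the stable contraction rate degenerates near singularities.
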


 Item (1) of this lemma was shown in \cite{CZ09} for all proper standard families (which we shall introduce at the end of this subsection), and can be easily modified for all standard families in $\fF_0$. Item (2) follows from time reversibility of the map.
Lemma \ref{lem: global} will be a key tool to construct a magnet in Section \ref{sec: R star}.

It was shown in \cite[Lemma 12]{CZ09} that the $T$-invariant standard family
$\cE=(\cW^u, \mu)$ belongs to $\fF_0$,
where $\cW^u$ is the collection of all maximal unstable manifolds
and $\mu$ is the SRB measure.

Fix a large constant $C_\bp>0$ such that
\beq\label{proper constant 1}
C_\bp>\max\{1+C_z, \ \cZ(\cE), \ (1/\hdelta)^{\bs_0}\},
\eeq
where $\hdelta$ is given by \eqref{proper constant 2}.

\begin{definition}
A standard family $\cG\in \fF_0$ is said to be \emph{proper} if $\cZ(\cG)<C_\bp$.
\end{definition}

By Lemma \ref{lem: growth ch}, any standard family $\cG\in \fF_0$ will eventually
become proper after sufficiently many iterates.
In particular, $\cE=(\cW^u, \mu)$ is a proper standard family.

\section{Hyperbolic Product sets, Rectangles and the Coupling Lemma}\label{sec: magnet}

\subsection{Hyperbolic Product Sets and Rectangles}

In this subsection, we introduce the basic blocks, the so-called hyperbolic product sets, for our construction (see \cite{CZ09} and \cite[\S ~7.11-7.12]{CM} for more details).

For $\mu$-a.e. $x, y\in M$, we put
$$
[x, y]:= W^u(x)\cap W^s(y),
$$
which is either a singleton or an empty set.

\begin{definition}\label{defnsset} $ $
\begin{itemize}
\item[(1)]
A closed subset $\cR\subset M$ is called a \emph{hyperbolic product set} if for any $x, y\in \cR$ we have
$\emptyset\ne [x,y]\in \cR$.
\item[(2)] A closed region $\cU\subset M$ is called a \emph{solid (hyperbolic) rectangle} if it is bounded by two unstable manifolds and two stable manifolds.
We denote by $\cU(\cR)$ the \emph{solid hull} of a hyperbolic product set $\cR$, that is, the minimal solid rectangle that contains $\cR$.
\item[(3)]
Let $\cR$ be a hyperbolic product set, and $\sigma\in\{s, u\}$.
We call
\beq\label{def star collection}
\Gamma^\sigma(\cR):=\{W^\sigma(x)\cap \cU(\cR):\ \ x\in \cR\}
\eeq
the \emph{$\sigma$-collection} of $\cR$. Note that $\cR=\Gamma^u(\cR)\cap\Gamma^s(\cR)$, which is indeed a product set.
\item[(4)]
Let $\cR$ be a hyperbolic product set, and $\sigma\in\{s, u\}$.
A subset $\widetilde{\cR}\subset \cR$ is called a \emph{$\sigma$-subset} of $\cR$
if $W^\sigma(x)\cap \widetilde{\cR}=W^\sigma(x)\cap \cR$ for $\mu$-a.e. $x\in \widetilde{\cR}$.
Given a solid rectangle $\cU$, a \emph{solid $\sigma$-subrectangle} of $\cU$ can be defined in a similar fashion.
\end{itemize}
\end{definition}

We notice that the stable and unstable manifolds at each point of
a hyperbolic product set $\cR$
exist and cross beyond the boundaries of the solid rectangular hull $\cU(\cR)$.
In practice, the singular set $\cS_{\pm \infty}$ is usually dense in $M$, and
thus arbitrarily short stable and unstable manifolds exist almost everywhere.
In such case, hyperbolic product sets are nowhere dense. Nevertheless,
we can consider the $\mu$-measure of a hyperbolic product set.
The lemma below directly follows from
the absolute continuity assumption (\textbf{H3})(3).

\begin{lemma}\label{lem su-prod}
Given a hyperbolic product set $\cR$, let $\cR^s$ be an $s$-subsets of $\cR$
and $\cR^u$ be a $u$-subset of $\cR$.
Then $\mu(\cR^s\cap \cR^u)>0$ if and only if
$\mu(\cR^s)>0$ and $\mu(\cR^u)>0$.
\end{lemma}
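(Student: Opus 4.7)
The ``only if'' direction is immediate, since $\cR^s\cap \cR^u$ is contained in both $\cR^s$ and $\cR^u$; the entire content of the lemma lies in the ``if'' direction. My plan is to exploit the product structure of $\cR$ together with the absolute continuity property (\textbf{H4})(3) of the stable holonomy, so as to reduce the positive-measure question to a product-of-marginals argument.

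First I will introduce product coordinates on $\cR$. Fixing a reference unstable leaf $W^u_0\in \Gamma^u(\cR)$ and a reference stable leaf $W^s_0\in \Gamma^s(\cR)$, the map $z\mapsto (W^s(z)\cap W^u_0,\ W^u(z)\cap W^s_0)$ identifies $\cR$ (modulo null sets) with $(W^u_0\cap \cR)\times (W^s_0\cap \cR)$. In these coordinates the defining property of an $s$-subset forces $\cR^s$ to take the form of a ``vertical strip'' $A\times (W^s_0\cap \cR)$ for some $A\subset W^u_0\cap \cR$, and the defining property of a $u$-subset forces $\cR^u$ to take the form of a ``horizontal strip'' $(W^u_0\cap \cR)\times B$ for some $B\subset W^s_0\cap \cR$; hence $\cR^s\cap \cR^u$ corresponds to $A\times B$.

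Next I will translate the measure hypotheses into positivity statements about $A$ and $B$. The SRB property (\textbf{H3}) together with the density formula \eqref{dens} says that the conditional $\mu_\alpha$ of $\mu$ on each leaf $W^u_\alpha\in \Gamma^u(\cR)$ is equivalent to the Lebesgue measure $m_{W^u_\alpha}$ with a strictly positive, bounded density. Disintegrating $\mu|_\cR=\int \mu_\alpha\,d\lambda(\alpha)$ along the unstable foliation --- with $\lambda$ the induced factor measure on the index set of leaves, which the product structure identifies with $W^s_0\cap \cR$ --- the absolute continuity of the holonomy $\bh^s$ between unstable leaves, with Jacobian bounded above and away from zero by (\textbf{H4})(3), implies that $\mu(\cR^s)>0$ is equivalent to $m_{W^u_0}(A)>0$, while $\mu(\cR^u)>0$ is equivalent to $\lambda(B)>0$.

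The conclusion will then follow from the identity
\[
\mu(\cR^s\cap \cR^u)=\int_{\alpha\in B}\mu_\alpha\bigl(\bh^s_{0,\alpha}(A)\bigr)\,d\lambda(\alpha),
\]
where $\bh^s_{0,\alpha}: W^u_0\cap \cR\to W^u_\alpha\cap \cR$ denotes the stable holonomy. Indeed $\bh^s_{0,\alpha}(A)$ has positive $m_{W^u_\alpha}$-measure by absolute continuity, hence positive $\mu_\alpha$-measure by the SRB density, and the integration runs over the positive-$\lambda$-measure set $B$. The only step I expect to require care is the passage from the positivity of $\mu$ to positivity of Lebesgue-type quantities on the reference leaves, in the presence of the countable singularity set $\cS_{\pm\infty}$ that forces $\cR$ to be a nowhere-dense Cantor-like set; but this is precisely the content of (\textbf{H4})(3), so no essential obstacle is anticipated.
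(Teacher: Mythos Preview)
Your proposal is correct and follows exactly the route the paper indicates: the paper does not actually write out a proof, but simply remarks that the lemma ``directly follows from the absolute continuity assumption'' (the reference should be (\textbf{H4})(3), not (\textbf{H3})(3) as printed). Your product-coordinate description of $\cR^s$ and $\cR^u$, together with the disintegration of $\mu$ along unstable leaves and the bounded Jacobian of the stable holonomy, is precisely the standard unpacking of that one-line justification.
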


\subsection{The Magnet -- A Special Hyperbolic Product Set}

In \cite{CM}, a \emph{magnet} is defined to be a special hyperbolic product set of positive measure that is used in the coupling lemma to attract the images of proper standard families,
which provides the foundation for the coupling algorithm. To construct such a special hyperbolic product set,
we first need to control the diameter of these hyperbolic sets such that
the oscillations of the SRB densities are small, and
the Jacobian of the stable holonomy map
takes values close to one.


By the bounded distortion of unstable Jacobian and
absolute continuity of the stable holonomy in Assumption (\textbf{H4}), we can prove the following properties.

\begin{lemma}\label{lem: abs cts}
There is $\delta_0>0$ such that
for any solid rectanlge $\cU(\cR)$ with diameter $\delta\in (0, \delta_0)$,
we have
\begin{itemize}
\item[(1)] for any unstable manifold $W\in\Gamma^u(\cR)$ and any $x, y\in W$,
\beqn
\left|\dfrac{\rho_W(x)}{\rho_W(y)}-1\right|<0.01,
\eeqn
where $\rho_W$ is the SRB density function given by \eqref{dens}.
\item[(2)]
for any two unstable manifolds $W^1, W^2\in\Gamma^u(\cR)$, we have
\beqn
\left|\dfrac{m_{W^2}(\bh^{s}(B))}{m_{W^1}(B)} - 1 \right|<0.01,
\eeqn
for any measurable subset $B\subset W^1_*$,
where $W^1_*$ is given by \eqref{abs cts domain}.
\end{itemize}
\end{lemma}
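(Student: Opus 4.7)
The plan is to prove (1) as a direct consequence of the formula (\ref{dens}) for the SRB density combined with the distortion bound (\ref{distor0}) and the uniform contraction of $T^{-1}$ on unstable manifolds, and to prove (2) by combining the H\"older regularity (\ref{cjchb}) of the holonomy Jacobian with a cocycle-type identity that pins down $J_{W^1_*}\bh^s$ pointwise near $1$.

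For part (1), I fix $W \in \Gamma^u(\cR)$ and $x, y \in W$, and use the telescoping relation
\beqn
J_WT^{-n}(x)^{-1} \;=\; \prod_{k=1}^{n} J_{T^{-k}W}T\big(T^{-k}x\big).
\eeqn
Applying (\ref{distor0}) termwise and using the uniform contraction $d_{T^{-k}W}(T^{-k}x, T^{-k}y) \le \Lambda^{-k}\, d_W(x,y)$ from (H1)(2), I would obtain
\beqn
\big|\log J_WT^{-n}(x) - \log J_WT^{-n}(y)\big| \;\le\; C_{\bJ}\sum_{k=1}^{n} \Lambda^{-k\bgamma_0} d_W(x, y)^{\bgamma_0} \;\le\; K\, d_W(x, y)^{\bgamma_0},
\eeqn
for a constant $K$ depending only on $C_{\bJ}, \Lambda, \bgamma_0$. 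Letting $n \to \infty$ in (\ref{dens}) yields $|\log(\rho_W(x)/\rho_W(y))| \le K d_W(x,y)^{\bgamma_0}$. Since $W$ has uniformly bounded curvature by (H4)(1) and is contained in $\cU(\cR)$ of diameter $\delta$, we have $d_W(x,y) \le 2\delta$ for $\delta$ small enough. Choosing $\delta_0$ so that $K(2\delta_0)^{\bgamma_0} < \log(1.01)$ finishes (1).

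For part (2), the key step is to write, for every $n\ge 1$, the cocycle identity
\beqn
J_{W^1_*}\bh^s(x) \;=\; \frac{J_{W^1}T^{n}(x)}{J_{W^2}T^{n}(\bh^s x)} \cdot J_{T^{n}W^1_*}\bh^s_{n}\big(T^{n} x\big),
\eeqn
where $\bh^s_n$ is the induced stable holonomy between $T^n W^1$ and $T^n W^2$. The log of the ratio on the right decomposes as $\sum_{k=0}^{n-1}\big[\log J_{T^{k}W^1}T(T^{k}x) - \log J_{T^{k}W^2}T(T^{k}\bh^s x)\big]$. At each step, $T^k x$ and $T^k \bh^s x$ lie on a common stable leaf of length at most $\Lambda^{-k}\delta$, and the transverse regularity of the unstable Jacobian (extracted by combining (\ref{distor0}) with the H\"older bound (\ref{cjchb}) applied at the $k$-th level) bounds each summand by $O(\Lambda^{-k\bgamma_0}\delta^{\bgamma_0})$, yielding a total of $O(\delta^{\bgamma_0})$. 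The remaining factor $J\bh^s_n(T^n x)$ is the Jacobian of a holonomy between two unstable curves whose stable distance has shrunk to $\le \Lambda^{-n}\delta$; sending $n \to \infty$ and appealing to (\ref{cjchb}) (whose right-hand side $\btheta^{\bs_+}$ tends to zero as the curves collapse onto each other) forces this factor to $1$. Combining, I get $|\log J_{W^1_*}\bh^s(x)| \le K'\delta^{\bgamma_0}$ uniformly in $x\in W^1_*$, and integrating against $dm_{W^1}$ produces the ratio bound on arc length measures stated in (2) once $\delta_0$ is further shrunk.

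The main obstacle is part (2), specifically controlling the termwise differences $\log J_{T^{k}W^1}T(T^{k}x) - \log J_{T^{k}W^2}T(T^{k}\bh^s x)$, since (\ref{distor0}) only bounds variation along a single unstable curve, not across the stable foliation. I would handle this by using (\ref{cjchb}) together with a comparison between the holonomies at successive levels: the derivative of $T$ restricted to an unstable curve depends smoothly on the curve in a manner quantified by the absolute continuity data, and this dependence can be read off from the H\"older bound (\ref{cjchb}) via the chain rule $J\bh^s_{k+1}(T^{k+1}x) = J\bh^s_k(T^kx)\cdot J_{T^kW^1}T(T^kx)/J_{T^kW^2}T(T^k\bh^s x)$. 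Once transverse H\"older regularity is established, summing the geometric series is routine and the required threshold $0.01$ dictates the final choice of $\delta_0$.
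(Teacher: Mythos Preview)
The paper gives no proof of this lemma beyond the sentence preceding it, which simply asserts that both items follow from the bounded-distortion and absolute-continuity clauses of Assumption~(\textbf{H4}). Your argument for part~(1) is correct and is the standard derivation.

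Your treatment of part~(2) has a real gap. The estimate~(\ref{cjchb}) controls only the \emph{oscillation} of $\log J_{W^1_*}\bh^s$ as a function of $x\in W^1_*$, in terms of the forward separation time $\bs_+(x,y)$ of two points on the \emph{same} unstable curve $W^1$. It says nothing about the size of $J\bh^s$ itself, and $\bs_+$ is not a measure of how close the curves $W^1$, $W^2$ are to one another. Your claim that ``$\btheta^{\bs_+}$ tends to zero as the curves collapse onto each other'' therefore misreads the hypothesis: even when $T^nW^1$ and $T^nW^2$ are at stable distance $\Lambda^{-n}\delta$, nothing in~(\ref{cjchb}) forces $J\bh^s_n$ toward $1$. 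Likewise~(\ref{distor0}) only bounds variation of $J_WT$ \emph{along} a single unstable curve, so the combination you invoke does not supply the transverse regularity needed for the termwise differences $\log J_{T^kW^1}T(T^kx)-\log J_{T^kW^2}T(T^k\bh^sx)$.

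What actually makes the product formula converge and pins the Jacobian near $1$ is the $C^{1+\bgamma_0}$ smoothness of $T$ from~(\textbf{H2})(1), together with H\"older dependence of the unstable tangent direction on the base point (a standard consequence of uniform hyperbolicity and the bounded-curvature clause~(\textbf{H4})(1)). These two ingredients give
\[
\bigl|\log J_{T^kW^1}T(T^kx)-\log J_{T^kW^2}T(T^k\bh^sx)\bigr|\le C\,d(T^kx,T^k\bh^sx)^{\bgamma_0}\le C\Lambda^{-k\bgamma_0}\delta^{\bgamma_0},
\]
after which your cocycle identity and geometric summation go through exactly as you wrote. So the architecture of your argument for~(2) is right; you just need to source the transverse estimate from~(\textbf{H2})(1) and the regularity of the unstable field, not from~(\ref{distor0}) and~(\ref{cjchb}).
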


We are now ready to define the concept of magnets.

\begin{definition}
A hyperbolic product set $\cR$ is called a \emph{magnet} if $\mu(\cR)>0$,
and its solid rectangular hull $\cU(\cR)$ has diameter less than the constant $\delta_0$ given in
Lemma \ref{lem: abs cts}.
\end{definition}

Given a magnet $\cR$, we shall consider
the standard family induced by the SRB measure $\mu$
restricted on the $u$-collection $\Gamma^u:=\Gamma^u(\cR)$, that is,
\beq\label{def u family}
\cG(\cR):=(\Gamma^u, \mu|_{\Gamma^u})=\{(W, \mu_W), W\in \Gamma^u, \lambda^u\},
\eeq
with the factor measure $\lambda^u$.
It is clear that $\mu|_{\Gamma^u}(M)=\mu(\cup_{W\in \Gamma^u} W)\ge \mu(\cR)>0$.

\subsection{Construction of a Magnet $\hcR$}\label{sec: R star}

Special hyperbolic product sets  were constructed in \cite{CZ09} and \cite[\S 7.12]{CM}.
To make our later construction of $\cR^*$ complete, we sketch the construction of a magnet $\hcR$ in this subsection.

Choose $\delta_1\in (0, \delta_0)$ and $\eta\in (0, 1)$ such that:
$$
1-\eta^{\bs_0}(c_2+c_3 \delta_1^{\bs_0})>0, \ \ \text{and}\ \ c_4\delta_1^{\frac{1}{1-\bq_0}}< \delta_0,
$$
where the constant $\delta_0$ is given in Lemma \ref{lem: abs cts}, and
the constants $c_2, c_3$ and $c_4$ were given in Lemma \ref{lem: global}.
As a result,
$\mu_W(N_{\delta_2})>0$
for any standard pair $(W, \mu_W)$ with $|W|>\delta_1$,
where we set $\delta_2:=\eta \delta_1$.

Fix such a standard pair $(W_0, \mu_{W_0})$.
We choose a density point $\hx\in W_0\cap N_{\delta_2}$\footnote{
Here the point $\hx$ is chosen to be a $\mu$-density point in $M$, as well as
a $\mu_{W_0}$-density point in $W_0\cap N_{\delta_2}$.
}
and a small constant
\beq\label{proper constant 2}
0<\hdelta<0.1 \min\left\{\delta_2,\ c_4\delta_1^{1/(1-\bq_0)}\right\},
\eeq
such that $\mu_{\hW}(N_{\delta_2})>0.99$,
where $\hW$ is a closed sub-manifold of $W_0$
with center $\hx$ and of radius $\hdelta$.
We further assume that the endpoints $\partial \hW$ belong to $N_{\delta_2}$.
Set
\beqn\label{hatgammas}
 \widetilde\Gamma^s=\{ W^s(y):\  y\in \hW \cap N_{\delta_2}\}.
\eeqn
Note that $\widetilde\Gamma^s$ is the collection of all maximal stable manifolds
along $\hW \cap N_{\delta_2}$, which stick out both sides of $W_0$
by at least $c_4 \delta_1^{1/(1-\bq_0)}\ge 10\hdelta$.

We say that an unstable manifold $W$ fully cross $\widetilde\Gamma^s$ if
$W$ meets every stable manifold in $\widetilde\Gamma^s$.
Let $\widetilde\Gamma^u$ be the collection of all maximal unstable manifolds $W$ that fully cross
$\widetilde\Gamma^s$ such that
\beqn
d_{W^s(\hx)}(W\cap W^s(\hx),\ \hW\cap W^s(\hx))\le \hdelta.
\eeqn
Let
\beq\label{def R star}
\hcR= \widetilde\Gamma^s \cap \widetilde\Gamma^u:=
\{W^s\cap W^u:\ W^s\in \widetilde\Gamma^s \ \text{and}\  W^u\in \widetilde\Gamma^u\}
\eeq
denote the hyperbolic product set made by these two collections.
We can guarantee that $\mu(\hcR)>0$
for small value of $\hdelta$, by using Lemma \ref{lem: abs cts}
and the fact that $r^u(x)>0$ for $\mu$-a.e. $x\in M$.

By using Lemma \ref{lem: abs cts} and decreasing the value of $\hdelta$ if necessary,
we may assume that $\hcR$ is relatively dense (measure-theoretically)
in its solid rectangular hull $\hcU:=\cU(\hcR)$, that is,
\beq\label{hcR dense}
\mu(\hcR|\hcU)>0.99.
\eeq
Moreover, $|W\cap \hcU|< 3\hdelta$ for any stable/unstable manifold $W$ that fully crosses $\hcU$.
Therefore,
$\hcU$ is of diameter less than $6\hdelta<0.6\delta_0$, and thus
$\hcR$ is a magnet.
Also, we can assume that $\hcU\cap \cS_{\pm 1}=\emptyset$, and
\beq\label{no early intersection}
T^k\hcU\cap \hcU=\emptyset, \ \ \text{for} \ k=1, 2, \dots, \lfloor \log_{\Lambda} 100\rfloor+1.
\eeq

\subsection{The Coupling Lemma on Magnets}

In this subsection,
we review the {\it Coupling Lemma},
which was originally proved by Chernov and Dolgopyat (cf. \cite{C06, CD, D01}) for dispersing billiards,
see also \cite[\S 7.12--7.15]{CM}.
The coupling lemma was
generalized in \cite{CZ09} to systems under the assumptions (\textbf{H1})-(\textbf{H5})
for proper standard families, and
then in \cite{SYZ} to time-dependent billiards.

Let $\cR$ be a magnet
with its $s/u$-collections $\Gamma^{s/u}:=\Gamma^{s/u}(\cR)$.
We first recall the concept called the {\it{generalized standard family}}
with respect to the magnet $\cR$.
which was first introduced in \cite{VZ16}.

\begin{defn}\label{pseodugs}
Let $(\cW,\nu)$ be a standard family, such that $\cW\subset \Gamma^u$
is a measurable collection of unstable manifolds in $\Gamma^u$.
Then we define $(\cW,\nu)|_{\cR}:=(\cW|_{\cR}, \nu|_{\cR})$, where
$$
\cW|_{\cR}:=\{W\cap \cR:\ W\in \cW\}.
$$
For any integer $n\geq 0$, we call $T^{-n} ((\cW,\nu)|_{\cR})$ as a generalized standard family with index $n$.
\end{defn}

The Coupling Lemma in \cite{CD,CM} can then be restated in the language of
generalized standard families (see \cite{VZ16}).

\begin{lemma}\label{coupling1}
Let  $\cG^i=(\cW^i, \nu^i), i=1,2$,  be two  proper standard families in $M$.
There exist two sequences of generalized standard families $\{(\cW^i_n,\nu^i_n)\}_{n\ge 1}$
such that:
\beqn\label{decomposeGi}
\cG^i=\sum_{n=1}^{\infty}(\cW^i_n,\nu^i_n):=\left(\bigcup_{n=1}^{\infty}\cW^i_n,\sum_{n=1}^{\infty}\nu^i_n\right).
\eeqn
Moreover, we have the following properties:
\begin{itemize}
\item[(1)] \textbf{Proper return to $\cR$ at Step $n$}: Both $(\cW^1_n,\nu^1_n)$ and $(\cW^2_n,\nu^2_n)$ are  generalized standard families of index $n$;
\item[(2)] \textbf{Coupling $T^{n}_*\nu^1_n$ and $T^{n}_*\nu^2_n$ along stable manifolds in $\Gamma^s$}: For any measurable collection of stable manifolds $A\subset \Gamma^s$, we have
$
T^{n}_*\nu^1_n(A)=T^{n}_*\nu^2_n(A);
$
\item[(3)] \textbf{Exponential tail bound for uncoupled measures at Step $n$}:
We denote $\bar\nu_n^i:=\sum_{k\geq n}\nu^i_k$ as the uncoupled measures at the $n$-th step.
There exist constants $C_{\bc}>0$ and $\vartheta_\bc\in (0, 1)$, such that
\beqn\label{ctail2}
  \bar\nu_n^i(M)<C_{\bc}\vartheta_\bc^{n}.
\eeqn
\end{itemize}
\end{lemma}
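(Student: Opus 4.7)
The plan is to use an inductive coupling scheme, successively extracting at each time $n\ge 1$ a matched pair of generalized standard families $(\cW^1_n,\nu^1_n),(\cW^2_n,\nu^2_n)$ of index $n$ from the residual uncoupled mass, and to show that a uniform positive fraction of the current uncoupled mass gets coupled at every step, which delivers the exponential tail bound in item (3). The magnet $\hcR$ constructed in Section \ref{sec: R star}, with its controlled distortion of SRB density and its nearly-isometric stable holonomy (Lemma \ref{lem: abs cts}), is used as the single meeting place where we align the two measures along the stable fibers $\Gamma^s=\Gamma^s(\hcR)$.

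First I would push both families forward: by the growth lemma (Lemma \ref{lem: growth ch}), $T^n\cG^i$ is proper for all $n$ beyond a fixed $n_0$, and by Lemma \ref{lem: global} a definite fraction of its $\nu^i$-mass lies in the set $N_{\delta_2}$ of points with long stable/unstable manifolds. Combined with the construction of $\hcR$, one argues that the collection $\widetilde\cW^i_n\subset T^n\cG^i$ of unstable curves that fully cross $\hcR$ carries at least a uniform fraction $\epsilon_0>0$ of the residual uncoupled mass at step $n$. On each such curve $W\in\widetilde\cW^i_n$, $W\cap\hcU$ is an unstable manifold of $\Gamma^u(\hcR)$. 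To couple, at step $n$ I would compare the two push-forward conditionals on the stable foliation $\Gamma^s$: for each $W^s\in\Gamma^s$ the holonomy and the bounded distortion of unstable Jacobians give, via Lemma \ref{lem: abs cts}, uniformly comparable transverse densities, so a positive fraction (depending only on the $0.01$-bounds in Lemma \ref{lem: abs cts}) can be coupled. Concretely, one can use a Kantorovich-type matching along each $W^s$ and assemble the resulting measure, transverse to $\Gamma^s$, via the factor measure $\lambda^{u,i}_n$ of $\widetilde\cW^i_n$ to define a coupled increment $\nu^i_{n,\mathrm{coup}}$ with common stable image, i.e.\ $T^n_*\nu^1_{n,\mathrm{coup}}(A)=T^n_*\nu^2_{n,\mathrm{coup}}(A)$ for every measurable $A\subset\Gamma^s$. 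Setting $\nu^i_n:=\nu^i_{n,\mathrm{coup}}$ and $\cW^i_n:=T^{-n}((\widetilde\cW^i_n)|_{\hcR})$ produces the desired generalized standard families of index $n$, establishing items (1) and (2).

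The remaining, uncoupled mass $\bar\nu^i_n=\sum_{k\ge n}\nu^i_k$ then has two features: first, it can again be written as a sum of standard families (the restriction of a standard pair to a measurable subset of the underlying curve yields a standard pair, up to a harmless renormalization of the density constant $C_{\br}$, because the constants in \eqref{lnholder0} are chosen with slack $C_{\br}>C_{\bJ}/(1-1/\Lambda)$); second, the $\cZ$-characteristic of these pieces remains uniformly controlled, because growth is dominated by the same inequality of Lemma \ref{lem: growth ch} applied componentwise. Consequently the same lower bound $\epsilon_0$ on the coupled fraction applies at step $n+1$, so $\bar\nu^i_{n+1}(M)\le(1-\epsilon_0)\bar\nu^i_n(M)$, yielding $\bar\nu^i_n(M)\le C_{\bc}\vartheta_{\bc}^n$ with $\vartheta_{\bc}=1-\epsilon_0$ after absorbing the initial non-proper transient into $C_{\bc}$.

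The main obstacle is the coupling fraction lower bound $\epsilon_0>0$ at each step, and in particular verifying it uniformly in $n$ and in the initial families. This requires (i) quantifying, via Lemmas \ref{lem: growth}--\ref{lem: global} and \eqref{hcR dense}, that a definite proportion of the push-forwards lands on unstable manifolds fully crossing $\hcU$; (ii) controlling the density regularity of the induced conditionals on $\Gamma^s$ so that the $\min$ of the two transverse densities integrates to a fixed fraction of each, which is where the $0.01$ slack in Lemma \ref{lem: abs cts} is essential; and (iii) checking that what is left after subtracting the coupled measure remains, after a harmless regrouping, a countable sum of standard pairs with $\cZ$ still uniformly bounded, so that the inductive step can be repeated. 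The remaining bookkeeping is formal, mirroring the argument of \cite[\S 7.12--7.15]{CM} and \cite{CZ09}.
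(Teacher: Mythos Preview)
The paper does not actually prove this lemma; it cites it as an established result from \cite{CD,CM,CZ09,VZ16} and merely restates it in the language of generalized standard families. Your sketch follows the same overall strategy as those references, but it contains a genuine gap at exactly the place you label ``formal bookkeeping.''

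The issue is your claimed step-by-step contraction $\bar\nu^i_{n+1}(M)\le(1-\epsilon_0)\bar\nu^i_n(M)$. After you subtract the coupled portion supported on $\hcR$, what remains on each unstable curve $W\in\widetilde\cW^i_n$ is $W\setminus\hcR$ together with the unmatched density. Since $\hcR$ is a Cantor set, $W\setminus\hcR$ shatters into countably many arbitrarily short subcurves, and your assertion that ``the restriction of a standard pair to a measurable subset of the underlying curve yields a standard pair'' with controlled $\cZ$ is false in this generality: the $\cZ$-value of the leftover family typically blows up. The slack in the choice of $C_{\br}$ controls the density regularity, not the curve lengths. Consequently the leftover family is not immediately proper, Lemma~\ref{lem: global} does not apply at step $n+1$, and the uniform coupling fraction $\epsilon_0$ is not available at every $n$.

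The actual argument in \cite[\S7.12--7.15]{CM} and \cite{CZ09} inserts a \emph{recovery time}: after each coupling event one waits a fixed number $N_1$ of iterates so that the growth lemma (Lemma~\ref{lem: growth ch}) restores properness of the leftover family, and only then couples again. A fixed fraction is coupled every $N_1$ steps rather than every step, which still yields exponential decay with $\vartheta_{\bc}=(1-\epsilon_0)^{1/N_1}$. One must also verify that the leftover density (the original density minus the matched density) is again regular, which requires a separate distortion argument and is not automatic from the $0.01$ bounds alone. These two points are the substantive content of the proof, not bookkeeping.
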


Note that the above coupling lemma was only stated for proper standard families.
To deal with any standard family in $\fF_0$,
we may need sufficiently many iterates to make it proper.
Moreover, $\cW^i_m$ and $\cW^i_n$ may not be disjoint for $m\ne n$,
unless during the coupling process, one can couple the entire measure that properly return to $\cR$.

\begin{remark}
The coupling lemma stated in \cite{VZ16} begins with Step $n=0$,
with the coupling times being hitting times to the magnet $\cR$.
Our coupling lemma~\ref{coupling1} is concerned about return times, which is obtained by
performing the coupling lemma in \cite{VZ16} for the image families $T\cG^i, i=1,2$.
\end{remark}

\section{Proof of Theorem \ref{main}}\label{sec: main proof}

We shall prove Theorem \ref{main} in the following $4$ steps:
\begin{itemize}
\item[(i)]
Firstly, we obtain a decomposition of the magnet $\hcR$, by using
the coupling lemma on the particular standard family $\cG(\hcR)$ defined in \eqref{def u family}.
\item[(ii)] Secondly, we define an induced map $\hF:\hcR\to \hcR$, which is based on the coupling times,
and show that it admits a coding by countable symbols.
It follows that the periodic points of the original system $(M, T)$ exist and are dense in $\hcR$.
\item[(iii)] Thirdly, we shrink $\hcR$ to our final magnet $\cR^*$ whose solid boundaries are formed
by stable and unstable manifolds of a certain periodic orbit.
\item[(iv)] Lastly,, we apply the coupling lemma again on $\cR^*$
and prove that the first returns  to $\cR^*$ are indeed proper returns.
\end{itemize}

\subsection{Decomposition of the Magnet $\hcR$}\label{sec attraction}

Recall that $\cG(\hcR)$, given by \eqref{def u family},
is the proper standard family built on the $u$-collection $\hGamma^u:=\Gamma^u(\hcR)$
of the magnet $\hcR$ for the SRB measure $\mu$.
Note that $\hGamma^u$ consists of unstable manifolds
that fully cross the solid rectangle $\hcU:=\cU(\hcR)$ and terminate at its $s$-sides.
Also, we denote $\hGamma^s:=\Gamma^s(\hcR)$ the $s$-collection of $\hcR$.

For $i=1, 2$, we take $\cG^i$ as two copies of the family $\cG(\hcR)$,
and apply the coupling lemma~\ref{coupling1} with respect to the magnet $\hcR$.
Note that at each step $n\ge 1$,
we can actually couple the entire measure that properly return to $\hcR$.
Therefore,
there exists a sequence of generalized standard families $\{(\hcW_n,\hmu_n)\}_{n\ge 1}$ with the following properties:\begin{itemize}
\item[(1)] $(\hcW_n,\hmu_n)$ is a generalized standard family of index $n$;\footnote{
Some $\hcW_n$ may be null.
}
\item[(2)] $\{\hcW_n\}_{n\ge 1}$ are mutually disjoint, and
\beq\label{cWn union}
\cG(\hcR)=(\hGamma^u, \mu|_{\hGamma^u})=\sum_{n=1}^{\infty}(\hcW_n,\hmu_n) \pmod \mu;
\eeq
\item[(3)] Furthermore,
there exist $\widehat{C}>0$ and $\widehat{\vartheta}\in (0, 1)$ such that
\beqn\label{ctail1}
  \sum_{k\ge n} \hmu_k (M)<\widehat{C}\widehat{\vartheta}^{n}, \ \ \text{for any}\  n\ge 1.
\eeqn
\end{itemize}
In fact, we have
\beq\label{def Gammanu}
T^{n}(\hcW_n,\hmu_n)=(\hGamma_n^u, \mu|_{\hGamma_n^u})|_{\hcR},
\eeq
where $\hGamma_n^u$ is the sub-collections of $\hGamma^u$ containing
all components of $T^{n}W\cap \hcU$ that fully cross $\hcU$,
for some unstable manifold $W\in \hGamma^u$
which has not been attracted by the magnet $\hcR$ before the $n$-th iterate.
More precisely, $\Gamma_n^u$ is inductively defined as follows.
We set $\hGamma^u_1=\hGamma^u\cap T\hGamma^u$.
Once $\hGamma_k^u$ has been defined for $1\le k\le n$, we then define
\beq\label{def Gamma_n}
\hGamma^u_{n+1}=\hGamma^u\cap T\left[T^n\hGamma^u\cap
\left(\hGamma^u_n|_{\hcR} \cup T(\hGamma^u_{n-1}|_{\hcR})\cup \dots \cup T^n(\hGamma^u_1|_{\hcR}) \right)^c \right].
\eeq
We can further decompose $\hGamma^u_n$ as a disjoint union of
at most countably many sub-collections $\hGamma^u_{n, j}$,
each of which is maximal in the sense that
$T^{-n}$ is smooth on the solid rectangular hull
$\hcU^u_{n,j}$ that shadows $\hGamma^u_{n, j}$.
Such union is at most countable, since any two distinct hulls
must be separated by the singularity set $\cS_{-n}$,
which consists of at most countably many smooth curves (see \cite[\S 7.12]{CM}).
It is clear that $\hcU^u_{n,j}$ are mutually disjoint solid $u$-subrectangles of $\hcU$,
and $\hGamma^u_{n, j}=\hGamma^u|_{\hcU^u_{n,j}}$.

Next we set $\hcU^s_{n,j}:=T^{-n}(\hcU^u_{n,j})$. Then $\hcU^s_{n,j}$
are solid $s$-subrectangles of $\hcU$, due to the facts that
$T^{-n}$ is smooth on $\hcU^u_{n,j}$ and $\hcU\cap \cS_{\pm 1}=\emptyset$.
Accordingly, we decompose $\hcW_n$ as a disjoint union of
at most countably many generalized standard families
$
\hcW_{n,j}:=\hcW_n|_{\hcU^s_{n,j}}.
$
By our construction, we have that
$\hcW_{n, j}=\hGamma^u|_{\hcU^s_{n,j}\cap \ T^{-n}\hcR}$,
and $\hcW_{n,j}$ are mutually disjoint.
Then we define
\beq\label{Rsnj}
\hcR^s_{n,j}:=\hcW_{n,j}\cap\hcR
=\hGamma^u\cap\left((\hGamma^s\cap T^{-n}\hGamma^s)|_{\hcU^s_{n,j}}\right).
\eeq
It is obvious that $\hcR^s_{n,j}$ are mutually disjoint $s$-subsets of $\hcR$.
Moreover, by \eqref{cWn union},
$$
(\hGamma^u, \mu|_{\hGamma^u})
=\sum_{n\ge 1}(\hcW_n, \hmu_n)
=\sum_{n\ge 1}\sum_{j\ge 1} (\hcW_{n,j},  \hmu_{n,j}) \pmod \mu,
$$
where $\hmu_{n,j}=\hmu_n|_{\hcW_{n, j}}$.
Then we have $\hGamma^u\cap\hcR
=\bigcup_{n,j} \hcW_{n, j}\cap\hcR \pmod \mu$, that is,
\beq\label{R star s union}
\hcR=\bigcup_{n, j} \ \hcR_{n,j}^s \pmod \mu.
\eeq
Lastly we define
\beq\label{Runj}
\hcR^u_{n,j}:=T^n \hcR^s_{n,j}.
\eeq
It follows from \eqref{def Gammanu} and \eqref{Rsnj} that
$\hcR^u_{n,j}$ are mutually disjoint $u$-subsets of $\hcR$.
Moreover,
\beq\label{R star u union}
\hcR=\bigcup_{n,j}\ \hcR^u_{n,j} \pmod \mu.
\eeq
Indeed, by \eqref{R star s union},
$$
\sum_{n,j} \mu(\hcR^u_{n,j})=\sum_{n,j} \mu(T^n \hcR^s_{n,j})=
\sum_{n,j} \mu(\hcR^s_{n,j})=\mu(\hcR).
$$
To summarize, in this subsection, we have proved the following results.
\begin{proposition}\label{couplingdecompose}
For any magnet $\hcR$, there exist two unique decompositions $$\hcR=\bigcup_{n\geq 1}\bigcup_{j=1}^{N_n}\hcR^s_{n,j}=\bigcup_{n\geq 1}\bigcup_{j=1}^{N_n}\hcR^u_{n,j},$$
where for any $n\geq 1$, any $j=1,\cdots, N_n$, with $N_n\in \mathbb{N}\cup\{+\infty\}$, the set $\hcR^{s/u}_{n,j}$ is an $s/u$-subset of $\hcR$, and $T^n\hcR^s_{n,j}=\hcR^u_{n,j}$.
\end{proposition}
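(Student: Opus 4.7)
The plan is to assemble Proposition \ref{couplingdecompose} as a clean statement of the construction sketched in Subsection~\ref{sec attraction}. First I would apply the Coupling Lemma~\ref{coupling1} to two copies of the proper standard family $\cG(\hcR)$ defined in \eqref{def u family}, taken with respect to the magnet $\hcR$ itself. Because at each step $n\ge 1$ the image under $T^n$ of the portion of mass that properly returns to $\hcR$ can be entirely coupled to itself along $\hGamma^s$, the coupling lemma delivers a sequence of mutually disjoint generalized standard families $\{(\hcW_n,\hmu_n)\}_{n\ge 1}$ with $(\hcW_n,\hmu_n)$ of index $n$ whose union is $\cG(\hcR)$ modulo $\mu$, together with the exponential tail bound on $\sum_{k\ge n}\hmu_k(M)$.

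Next I would refine each $\hcW_n$ using the smooth structure of the iterates. Inductively define $\hGamma^u_n$ as in \eqref{def Gamma_n}, and decompose it into maximal sub-collections $\hGamma^u_{n,j}$ on which $T^{-n}$ extends smoothly to a common solid rectangular hull $\hcU^u_{n,j}\subset\hcU$. The index $j$ ranges over an at most countable set $\{1,\dots,N_n\}$ with $N_n\in\IN\cup\{+\infty\}$, because distinct maximal hulls must be separated by $\cS_{-n}$, which is a countable union of smooth curves by Assumption (\textbf{H2}). Setting $\hcU^s_{n,j}:=T^{-n}\hcU^u_{n,j}$, which is a solid $s$-subrectangle of $\hcU$ since $\hcU\cap\cS_{\pm1}=\emptyset$ and $T^{-n}$ is smooth on $\hcU^u_{n,j}$, I would then define $\hcR^s_{n,j}$ and $\hcR^u_{n,j}$ exactly as in \eqref{Rsnj} and \eqref{Runj}.

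Then I would verify both decompositions. Disjointness of the $\hcR^s_{n,j}$ follows from the disjointness of $\{\hcW_n\}$ from the coupling lemma and the disjointness of the hulls $\hcU^s_{n,j}$ for each fixed $n$. The identity $\hcR=\bigcup_{n,j}\hcR^s_{n,j}\pmod\mu$ comes from intersecting \eqref{cWn union} with the $s$-direction of $\hcR$. For the $u$-side, \eqref{def Gammanu} says $T^n\hcW_{n,j}$ is a generalized standard family whose underlying unstable curves fully cross $\hcU$ through leaves of $\hGamma^u$, so $\hcR^u_{n,j}=T^n\hcR^s_{n,j}$ is indeed a $u$-subset of $\hcR$. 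The partition property $\hcR=\bigcup_{n,j}\hcR^u_{n,j}\pmod\mu$ follows from $T$-invariance of $\mu$ and the computation $\mu(\hcR)=\sum_{n,j}\mu(\hcR^s_{n,j})=\sum_{n,j}\mu(\hcR^u_{n,j})$. Uniqueness is obtained by observing that any alternative decomposition into $s$-subsets with proper return time exactly $n$ and with smooth pullback $T^{-n}$ on a single hull must coincide with $\{\hcR^s_{n,j}\}$ modulo $\mu$, because both refine the unique coupling-lemma decomposition $\{\hcW_n\}$.

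The main obstacle I expect is not the existence of the families, which is essentially built into the coupling lemma, but rather certifying the two structural properties simultaneously: that each $\hcR^u_{n,j}$ is a genuine $u$-subset of $\hcR$ (which uses the geometric fact that $T^n$ maps the pieces onto full crossings of $\hcU$), and that the refinement indexed by $j$ is at most countable in a way compatible with $\mu$ (which relies on the countable smooth-curve structure of $\cS_{-n}$ from Assumption~(\textbf{H2})). Controlling these two points carefully is what turns the coupling decomposition of measures into a set-theoretic Markov-type decomposition of $\hcR$.
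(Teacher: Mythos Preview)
Your proposal is correct and follows essentially the same route as the paper: Subsection~\ref{sec attraction} carries out exactly this construction---applying Lemma~\ref{coupling1} to two copies of $\cG(\hcR)$, extracting the disjoint families $(\hcW_n,\hmu_n)$, refining each $\hGamma^u_n$ into maximal pieces on which $T^{-n}$ is smooth (countably many by the structure of $\cS_{-n}$), and then verifying \eqref{R star s union} and \eqref{R star u union} via the same measure computation you outline. The paper states Proposition~\ref{couplingdecompose} simply as a summary of that subsection, so your plan matches it step for step.
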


\subsection{Symbolic Coding and Periodic Points}\label{sec symbolic coding}

The magnet $\hcR$ can be coded by a countable full shift as follows.
Denote the alphabet set by
\beqn
\fA:=\{a=(n, j):\  \mu(\hcR^s_{a})>0 \}.
\eeqn
For any $x\in \hcR^s_a$ with $a=(n, j)\in \fA$, we set
\beqn
\tau(x)=\tau(a):=n.
\eeqn
The induced map $\hF: \hcR \to \hcR$ is given by
\beqn
\hF(x)=T^{\tau(x)}(x), \ \ \text{for any}\ x\in \hcR^s_a,
\eeqn
and it is undefined elsewhere. By \eqref{R star s union},
$\hF$ is well defined for $\mu$-a.e. $x\in \hcR$.

Note that $\hF$ is continuous on each $\hcR^s_a$, as it is the restriction of the smooth map
$T^{\tau(a)}|_{\hcU^s_a}$ on $\hcR^s_a$. Similarly, $\hF^{-1}$ is well defined and continuous on $\hcR^u_a$.
Moreover, $\hF$ preserves the SRB measure $\mu$, since for any measurable subset $B\subset \hcR$,
$$
\mu(\hF^{-1}(B))=\sum_{a\in \fA} \mu(\hF^{-1}(B)\cap \hcR^s_a)
=\sum_{a\in \fA}\mu(T^{-\tau(a)}(B\cap \hcR^u_a))
=\sum_{a\in \fA}\mu(B\cap \hcR^u_a)=\mu(B).
$$

Given a bi-infinite sequence $\ab=(a_k)_{k\in \IZ}\in \fA^\IZ$, we define
\beq\label{def pi}
\pi(\ab):=\bigcap_{k=-\infty}^\infty \hF^{-k} \hcR^s_{a_k}
=\lim_{m, \ell \to\infty} \bigcap_{k=-m}^\ell \ \hF^{-k} \hcR^s_{a_k} .
\eeq

\begin{remark}
Although $\hF^{-k}$ may not be defined on every point of $\hcR^s_a$ for $k\ne  -1$,
the intersection $\bigcap_{k=-m}^\ell \ \hF^{-k} \hcR^s_{a_k}$ in \eqref{def pi} should be understood as
$$
\hF^{-1}\left(\hcR^u_{a_0} \cap \hF^{-1}\left( \dots \cap \hF^{-1}\left(
\hcR^u_{a_{\ell-1}} \cap \hF^{-1} \hcR^u_{a_{\ell}}
\right)  \right) \right)
\bigcap \hF\left(\hcR_{a_{-1}}^s\cap \hF\left(\dots \cap
\hF\left(\hcR^s_{a_{-m+1}}\cap \hF\hcR^s_{a_{-m}} \right)   \right) \right).
$$
Then it is not hard to see that $\bigcap_{k=-m}^\ell \ \hF^{-k} \hcR^s_{a_k}$ is a closed subset of $\hcR^s_{a_0}$.
\end{remark}

\begin{lemma}\label{lem intersection}
For any $m, \ell\in \IN$ and any bi-infinite sequence $\ab=(a_k)_{k\in \IZ}\in \fA^\IZ$, we have
\beq\label{eq intersection}
\mu\left( \bigcap_{k=-m}^\ell  \hF^{-k} \hcR^s_{a_k}\right)>0.
\eeq
Consequently, $\pi(\ab)$ is well defined, and in fact, a single point in $\hcR^s_{a_0}$.
\end{lemma}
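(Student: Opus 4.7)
I would decompose the intersection into its forward and backward halves,
\[
A^+_\ell := \bigcap_{k=0}^\ell \hF^{-k}\hcR^s_{a_k},\qquad
C_m := \bigcap_{k=1}^m \hF^{k}\hcR^s_{a_{-k}},
\]
and note that $A^+_\ell\subset \hcR^s_{a_0}$, so $\bigcap_{k=-m}^\ell \hF^{-k}\hcR^s_{a_k}=A^+_\ell\cap C_m$. The strategy is to show that $A^+_\ell$ is a positive-$\mu$-measure $s$-subset of $\hcR$, while $C_m$ is a positive-$\mu$-measure $u$-subset of $\hcR$, so that Lemma~\ref{lem su-prod} yields \eqref{eq intersection}. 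The essential input is the Markov structure of $\hF$: on each $\hcR^s_a$ the map $\hF$ equals the smooth diffeomorphism $T^{\tau(a)}$ (since $\hcU\cap \cS_{\pm 1}=\emptyset$), it sends $\hcR^s_a$ bijectively onto $\hcR^u_a$, and through the product structure on $\hcR$ coming from $\Gamma^s(\hcR)$ and $\Gamma^u(\hcR)$ it acts as a ``product map'' $(W^s,W^u)\mapsto (\hF^{(s)}_a W^s,\hF^{(u)}_a W^u)$, with $\hF^{(s)}_a$ and $\hF^{(u)}_a$ absolutely continuous bijections onto the appropriate leaf spaces.

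For the forward part I would induct on $\ell$. The $s$-subset property of $A^+_\ell$ persists through the recursion $A^+_{\ell+1}=A^+_\ell\cap \hF^{-(\ell+1)}\hcR^s_{a_{\ell+1}}$: given a full stable slice $W^s(x)\cap \hcR$ inside $A^+_\ell$, its image $\hF^{\ell+1}(W^s(x)\cap \hcR)$ lies in the stable manifold of $\hF^{\ell+1}(x)\in \hcR^s_{a_{\ell+1}}$, and the $s$-subset property of $\hcR^s_{a_{\ell+1}}$ then places the entire slice in $\hF^{-(\ell+1)}\hcR^s_{a_{\ell+1}}$. Positivity of $\mu(A^+_\ell)$ I would obtain by tracking the pushforwards $B_k:=\hF^k(A^+_{k-1})$. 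One has $B_1=\hcR^u_{a_0}$; assuming $B_k$ is a positive-measure $u$-subset of $\hcR$, Lemma~\ref{lem su-prod} applied to $B_k$ and $\hcR^s_{a_k}$ gives $\mu(B_k\cap \hcR^s_{a_k})>0$, and the Markov product action of $\hF|_{\hcR^s_{a_k}}$ carries the rectangle $B_k\cap \hcR^s_{a_k}$ onto a positive-measure $u$-subset $B_{k+1}$ of $\hcR$. Measure invariance of $\hF$ under $\mu$ then yields $\mu(A^+_\ell)=\mu(B_\ell\cap \hcR^s_{a_\ell})>0$.

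The backward part I would handle by a time-reversal trick combined with a parallel induction. Applying $\hF^{-m}$ to $A^-_m:=\hcR^s_{a_0}\cap C_m$ gives the forward intersection for the shifted sequence $(a_{-m},\ldots,a_0)$, which is a positive-measure $s$-subset by the previous step; hence $\mu(A^-_m)>0$ by $\hF$-invariance of $\mu$. Meanwhile, a short induction using the same product-map structure shows that each $\hF^k\hcR^s_{a_{-k}}$ ($k\ge 1$) is a $u$-subset of $\hcR$, and the intersection of $u$-subsets of $\hcR$ remains a $u$-subset, so $C_m$ is a $u$-subset of $\hcR$ with $\mu(C_m)\ge \mu(A^-_m)>0$. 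A final application of Lemma~\ref{lem su-prod} to the $s$-subset $A^+_\ell$ and the $u$-subset $C_m$ (both of positive $\mu$-measure) proves \eqref{eq intersection}.

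For the single-point assertion, hyperbolicity (\textbf{H1})(2) gives that $\hF^\ell=T^{\tau(a_0)+\cdots+\tau(a_{\ell-1})}$ expands unstable distances by at least $\Lambda^\ell$; since $\hF^\ell(A^+_\ell)\subset \hcR^s_{a_\ell}\subset \hcU$ has diameter at most $6\hdelta$, the unstable diameter of $A^+_\ell$ is bounded by $6\hdelta\,\Lambda^{-\ell}$, and symmetrically the stable diameter of $A^-_m$ is bounded by $6\hdelta\,\Lambda^{-m}$. The nested family $\{A^+_\ell\cap A^-_m\}_{\ell,m\ge 0}$ consists of closed subsets of the compact set $\hcR^s_{a_0}$ (closedness follows as in the Remark preceding the statement), and they are nonempty by \eqref{eq intersection}; their diameters shrink to zero in both the stable and unstable directions, so the intersection $\pi(\ab)$ consists of exactly one point. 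The main technical hurdle I anticipate is justifying the ``product map'' behavior of $\hF$ on the Cantor-like magnet $\hcR$, i.e., that $\hF|_{\hcR^s_a}$ permutes the leaves of $\Gamma^{s/u}(\hcR)$ as claimed; this ultimately rests on $\hcU\cap\cS_{\pm 1}=\emptyset$, the fact that the manifolds of $\Gamma^{s/u}(\hcR)$ fully cross $\hcU$ by construction in Section~\ref{sec: R star}, and the absolute continuity of the stable holonomy from (\textbf{H4})(3).
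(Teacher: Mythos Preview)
Your proposal is correct and uses the same key ingredients as the paper: $\hF$-invariance of $\mu$, Lemma~\ref{lem su-prod}, and uniform hyperbolicity for the diameter estimate. The organization differs, however, and the paper's route is noticeably shorter. Instead of splitting into a forward $s$-subset $A^+_\ell$ and a backward $u$-subset $C_m$ and then recombining, the paper first observes
\[
\mu\Big(\bigcap_{k=-m}^\ell \hF^{-k}\hcR^s_{a_k}\Big)
=\mu\Big(\hF^m\Big(\bigcap_{k=0}^{m+\ell}\hF^{-k}\hcR^s_{a_{k-m}}\Big)\Big)
=\mu\Big(\bigcap_{k=0}^{m+\ell}\hF^{-k}\hcR^s_{a_{k-m}}\Big),
\]
reducing immediately to the case $m=0$. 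The single forward induction is then run not by tracking your pushforwards $B_k$, but by peeling off the \emph{leftmost} symbol: apply $\hF$ to the whole intersection to get
\(
\hcR^u_{a_0}\cap\bigcap_{k=0}^{\ell}\hF^{-k}\hcR^s_{a_{k+1}},
\)
and invoke Lemma~\ref{lem su-prod} directly, since $\hcR^u_{a_0}$ is a $u$-subset by construction and the remaining intersection is an $s$-subset of positive measure by the inductive hypothesis.

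The practical payoff of the paper's arrangement is that it never needs the ``product-map'' verification you flag as the main technical hurdle: the only $u$-subset that appears is $\hcR^u_{a_0}$ itself, whose status is part of Proposition~\ref{couplingdecompose}, and the $s$-subset side is just an intersection of the $s$-subsets $\hF^{-k}\hcR^s_{a_k}$. Your argument is valid, but the extra work of showing each $B_{k+1}$ is a $u$-subset and that $C_m$ is a $u$-subset can be bypassed entirely.
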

\begin{proof} It suffices to show \eqref{eq intersection} for the case when $m=0$, since
$$
\mu\left(\bigcap_{k=-m}^\ell \hF^{-k} \hcR^s_{a_k}\right)=
\mu\left(\hF^{m} \left(\bigcap^{m+\ell}_{k=0} \hF^{-k} \hcR^s_{a_{k-m}}\right)\right)=
\mu\left(\bigcap^{m+\ell}_{k=0} \hF^{-k} \hcR^s_{a_{k-m}}
\right).
$$
We then set $m=0$, and make induction on $\ell$. For $\ell=0$, it is obvious that $\mu(\hcR^s_{a_0})>0$.
Suppose now \eqref{eq intersection} holds for $\ell$, then by Lemma~\ref{lem su-prod},
$$
\mu\left( \bigcap_{k=0}^{\ell+1} \hF^{-k} \hcR^s_{a_k}\right)=
\mu\left( \left( \bigcap^{\ell}_{k=0} \hF^{-k} \hcR^s_{a_{k+1}}\right)\bigcap \hcR^u_{a_0}
\right)>0,
$$
as $\hcR^u_{a_0}$ is a u-subset of $\hcR$ with positive measure, and
$\bigcap_{k=0}^{\ell} \hF^{-k} \hcR^s_{a_{k+1}}$ is a s-subset of $\hcR$ with positive measure by induction assumption.

Consequently, any finite intersection
$
 \bigcap_{k=-m}^\ell \hF^{-k} \hcR^s_{a_k}
$
is a non-empty closed subset of $\hcR$.
Moreover, by the uniform hyperbolicity of $\hF=T^\tau$, the diameter of
$
 \bigcap_{k=-m}^\ell \hF^{-k} \hcR^s_{a_k}
$
decreases exponentially in the rate of order at least $\Lambda^{-\min\{m, \ell\}}$.
Therefore, by \eqref{def pi} and the compactness of $\hcR^s_{a_0}$,
$\pi(\ab)$ is a single point in $\hcR^s_{a_0}$.
\end{proof}

Lemma~\ref{lem intersection} directly implies that
the two-sided full shift $\sigma: \fA^\IZ\to \fA^\IZ$
is topologically conjugate to the induced map $\hF: \hcR_{\hF}\to \hcR_{\hF}$
via the map $\pi$,
where
$
\hcR_{\hF}:=\bigcap_{k=-\infty}^\infty \hF^{-k}\left( \bigcup_{a\in \fA} \hcR^s_a\right)
$
is the maximal $\hF$-invariant subset of $\hcR$ with full measure.
As an immediate result, we have

\begin{lemma}\label{lem period}
For any periodic sequence
$$
\ab=\overline{(a_0, a_1, \dots, a_{m-1})}:=\{(\ab_k):\ \ab_{k+jm}=a_k, \
\text{for any}\ 0\le k<m, \ j\in \IZ\}\in \fA^\IZ,
$$
the point $\pi(\ab)$ is an $m$-periodic point of $(\hcR_{\hF}, \hF)$,
and thus a periodic point  of $(M, T)$ with period $(\tau(a_0)+\dots+\tau(a_{m-1}))$.
\end{lemma}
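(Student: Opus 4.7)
The plan is to leverage the conjugacy between the shift $\sigma$ on $\fA^\IZ$ and the induced map $\hF$ on $\hcR_{\hF}$ that is implicit in the paragraph immediately preceding the lemma, and then to unfold $\hF$ into powers of $T$ along the coded orbit.

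First I would explicitly verify the intertwining identity $\hF\circ \pi = \pi\circ\sigma$ on $\fA^\IZ$. Starting from the definition
\[
\pi(\ab)=\bigcap_{k\in\IZ}\hF^{-k}\hcR^s_{a_k},
\]
applying $\hF$ and re-indexing $k\mapsto k+1$ yields $\bigcap_{k\in\IZ}\hF^{-k}\hcR^s_{a_{k+1}}=\pi(\sigma\ab)$. The re-indexing is legitimate in view of the interpretation of nested intersections given in the Remark before Lemma~\ref{lem intersection} and the fact that both $\hF$ and $\hF^{-1}$ are defined on $\hcR_{\hF}$, which is $\hF$-invariant. Now if $\ab$ is the $m$-periodic sequence $\overline{(a_0,\dots,a_{m-1})}$, so that $\sigma^m\ab=\ab$, iterating the intertwining $m$ times gives
\[
\hF^m\bigl(\pi(\ab)\bigr)=\pi(\sigma^m\ab)=\pi(\ab),
\]
which is exactly the $m$-periodicity of $x:=\pi(\ab)$ under $\hF$ in $\hcR_{\hF}$.

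To pass from $\hF$-periodicity to $T$-periodicity I would track the symbols along the orbit. By Lemma~\ref{lem intersection}, $x\in\hcR^s_{a_0}$, so $\tau(x)=\tau(a_0)$ and $\hF(x)=T^{\tau(a_0)}(x)$. But $\hF(x)=\pi(\sigma\ab)\in\hcR^s_{a_1}$, so $\tau(\hF(x))=\tau(a_1)$ and therefore $\hF^2(x)=T^{\tau(a_0)+\tau(a_1)}(x)$. By a straightforward induction one obtains, for every $0\le k\le m$,
\[
\hF^k(x)=T^{\tau(a_0)+\tau(a_1)+\cdots+\tau(a_{k-1})}(x)\in\hcR^s_{a_k}.
\]
Setting $k=m$ and using $\hF^m(x)=x$ yields $T^{\tau(a_0)+\cdots+\tau(a_{m-1})}(x)=x$, which is the claimed periodicity under $T$.

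There is essentially no hard step: everything is bookkeeping once the conjugacy with the full shift is in hand. The only point requiring a touch of care is to justify the re-indexing in the first paragraph, i.e.\ to make sure that $\hF^{\pm 1}$ are well-defined and continuous on each set appearing in the nested intersection so that $\hF(\bigcap_k\hF^{-k}\hcR^s_{a_k})=\bigcap_k\hF^{-k}\hcR^s_{a_{k+1}}$; this is exactly what the construction of $\hcR_{\hF}$ and the smoothness of $\hF|_{\hcR^s_a}$, $\hF^{-1}|_{\hcR^u_a}$ provide. Note also that the lemma does not claim minimality of the period: it asserts only that $\tau(a_0)+\cdots+\tau(a_{m-1})$ is \emph{a} period of $x$ under $T$, which is exactly what the computation above produces.
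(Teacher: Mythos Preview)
Your proposal is correct and matches the paper's approach: the paper states this lemma as an immediate consequence of the topological conjugacy $\hF\circ\pi=\pi\circ\sigma$ established just before it, and provides no further proof. Your write-up simply fills in the bookkeeping (the intertwining identity and the unfolding $\hF^k(x)=T^{\tau(a_0)+\cdots+\tau(a_{k-1})}(x)$) that the paper leaves implicit, including the correct observation that the period need not be minimal.
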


It follows that periodic points of the original system $(M, T)$ is dense in $\hcR$. Indeed,
for any $\eps>0$ and $x\in \hcR$, there is $y=\pi(\ab)\in \hcR_{\hF}$, for some $\ab=(a_k)_{k\in \IZ}\in \fA^\IZ$,
that is $\eps$-close to $x$. Then we can choose a large $m\in \IN$ such that the periodic point
$y'=\pi(\ab')$ is $\eps$-close to $y$ and hence $2\eps$-close to $x$,
where $\ab'=\sigma^{m}\overline{(a_{-m}, \dots, a_{-1}, a_0, a_1, \dots, a_{m})}$.

\subsection{Construction of the Final Magnet $\cR^*$}

In this subsection, we construct the final magnet $\cR^*$ in Theorem \ref{main},
which is bounded by stable and unstable manifolds of a particular periodic orbit. Moreover, $R^*$ is special in the following sense.

\begin{definition}\label{def regular property}
We say a solid rectangle $\cU$ is {\it{perfect}} if no forward iterations of its stable boundary ever enter the interior of $\cU$, and no backward iterations of unstable boundary ever enter the interior of $\cU$. We say the hyperbolic product set $\cR$ is {\it{perfect}} if $\cU(\cR)$ is perfect.
\end{definition}

\begin{lemma}\label{perfectcU} There is a perfect rectangle $\cU^*$ is the rectangle $\hat\cU$.
\end{lemma}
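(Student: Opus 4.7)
The plan is to construct $\cU^*$ as a solid sub-rectangle of $\hcU$ whose four boundary arcs lie on the stable and unstable manifolds of two periodic points of $T$ in $\hcR$. The perfection property is then encoded in the $T^N$-invariance of these manifolds, with only finitely many intermediate iterates to control.

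First I would invoke the symbolic coding of Subsection~\ref{sec symbolic coding} and Lemma~\ref{lem period}: for any $a,b\in\fA$ with $\mu(\hcR^s_a),\mu(\hcR^s_b)>0$, the periodic points $p=\pi(\overline{ab})$ and $q=\pi(\overline{ba})$ both lie in $\hcR$ and have a common $T$-period $N=\tau(a)+\tau(b)$. Since $\hcR$ is a hyperbolic product set, the brackets $[p,q]=W^u(p)\cap W^s(q)$ and $[q,p]=W^u(q)\cap W^s(p)$ exist, and I would take $\cU^*$ to be the closed solid rectangle with corners $\{p,[p,q],q,[q,p]\}$, whose stable sides lie on $W^s(p)$ and $W^s(q)$ and whose unstable sides lie on $W^u(p)$ and $W^u(q)$.

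The perfection then follows from two observations. Because $p$ and $q$ are $N$-periodic, $T^N$ leaves $W^s(p),W^s(q)$ invariant and strictly contracts them toward $p,q$ respectively, and symmetrically $T^{-N}$ strictly contracts $W^u(p),W^u(q)$. Hence at every iterate of the form $k=jN$ ($j\ge 1$) the forward images of the stable boundary remain on the same two stable manifolds but on shorter sub-arcs containing the corners $p,q$, so they stay on $\partial\cU^*$ and never reach the interior; the symmetric statement handles the unstable boundary under backward iteration.

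The main obstacle is to rule out that the intermediate forward iterates $T^k(W^s(p)\cap\cU^*)\subset W^s(T^kp)$ and $T^k(W^s(q)\cap\cU^*)\subset W^s(T^kq)$, with $1\le k<N$, cut across the interior of $\cU^*$; and similarly for the $T^{-k}$ images of the unstable boundary. The set of such manifolds is a finite collection of stable and unstable curves, transverse to each other by Assumption (\textbf{H1})(3), and by \eqref{no early intersection} none of them can re-enter $\hcU$ until $k>\lfloor\log_\Lambda 100\rfloor+1$. To dispose of the remaining finitely many curves, I would use the density of periodic points in $\hcR$ to perturb the choice of $a,b$ within their symbolic cylinders, shrinking the candidate rectangle if necessary along any bad intermediate curve so that the $2(N-1)$ intermediate orbit points and their local stable/unstable arcs all lie outside the refined rectangle. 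Since each such chop removes a solid $s$- or $u$-subrectangle along a smooth curve and leaves a product rectangle still containing $p$ and $q$ as corners, the refinement terminates after at most $4(N-1)$ operations and produces the desired perfect $\cU^*\subset\hcU$ satisfying Definition~\ref{def regular property}.
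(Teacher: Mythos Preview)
Your overall strategy---take corners on a periodic orbit from the symbolic coding of $\hcR$, then handle the finitely many intermediate iterates---matches the paper's, and indeed your $p=\pi(\overline{ab})$, $q=\pi(\overline{ba})=\hF(p)$ are exactly the paper's $x_0$ and $\hF(x_0)$. The gap is in the last paragraph, where the ``chop and refine'' argument does not do what you claim.

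First, a stable curve through the interior of your rectangle separates the corner $p$ (which lies on $W^s(p)$) from the corner $q$ (which lies on $W^s(q)$), so after a single stable chop you cannot keep both $p$ and $q$ as corners; your stated invariant is false. Second, once you replace a stable side by a piece of $W^s(T^k p)$, you must now check all forward iterates of \emph{this} new side, i.e.\ the curves $W^s(T^{k+j}p)$ for $j\ge 1$; nothing you have proved prevents these from re-entering the refined rectangle, so the bound ``at most $4(N-1)$ operations'' is unjustified. Third, you have not shown that the intermediate curves $W^{s/u}(T^k p)\cap\cU^*$ actually \emph{fully cross} $\cU^*$; if one only grazes, chopping along it does not even produce a solid sub-rectangle. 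Finally, ``perturb $a,b$ within their symbolic cylinders'' has no meaning for a discrete alphabet, and replacing $\overline{ab}$ by a longer periodic word only increases $N$ and hence the number of intermediate curves to control.

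The paper closes exactly these holes. It chooses the periodic word $\overline{a_0^{N_0}a_1a_0^{N_0}}$ with $a_0,a_1$ selected (Lemma~\ref{lem u diameter}) to be \emph{comparable} and \emph{almost centric} in $\hcU$, which is what forces every orbit point landing in $\hcU_{a_0}$ to have long enough stable and unstable manifolds to fully cross (Lemma~\ref{lem fully cross}). It then collects \emph{all} such intermediate curves into finite families $\cD^s,\cD^u$ and proves the key structural fact (Lemma~\ref{lem partition}): these families are $T$- (resp.\ $T^{-1}$-) invariant, in the sense that any forward iterate of a curve in $\cD^s$ that meets $\cU_0^*$ is a sub-arc of another curve in $\cD^s$. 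Given this invariance, one simply takes $\cU^*$ to be any cell of the finite partition of $\cU_0^*$ by $\cD^s\cup\cD^u$; perfection is then automatic, with no iterative chopping needed. Your sketch is missing precisely the analogue of Lemma~\ref{lem partition}, and without it the argument does not terminate.
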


To prove this lemma, we first prove a few lemmas. We choose two special symbols in $\fA$ as follows.
For any $a\in \fA$, the solid rectangle $\hcU^s_a=\cU(\hcR^s_a)$ is a solid $s$-subrectangle of $\hcU=\cU(\hcR)$.
By Lemma \ref{lem: abs cts},
there is $\delta_{a}>0$ such that
\beq\label{def delta_a}
0.99\delta_{a}\le |W\cap \hcU^s_{a}|\le 1.01\delta_{a}
\eeq
for any unstable manifold $W$ that fully cross $\hcU^s_{a}$.
We fix the number $\delta_a$ for each $a\in \fA$, and call it the (rough) $u$-diameter of $\hcU^s_a$.

Given $\eta\in (0,1)$, we denote by $\hcU^s(\eta)$ the solid $s$-subrectangle of $\hcU$
such that each $s$-side of $\hcU^s(\eta)$ is away from the nearest $s$-side of $\hcU$ by
$\eta$ multiple of the $u$-diameter of $\hcU$.

\begin{lemma}\label{lem u diameter}
There exist two symbols $a_0, a_1\in \fA$ such that
the $u$-diameters of $\hcU^s_{a_0}$ and $\hcU^s_{a_1}$ are comparable in the sense that
$\frac12\delta_{a_0}\le \delta_{a_1}\le 2\delta_{a_0}$.
Furthermore, $\hcU^s_{a_0}$ and $\hcU^s_{a_1}$ are almost centric in the sense that
they are both contained in $\hcU^s(0.3)$.
\end{lemma}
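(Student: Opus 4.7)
The goal is to find two symbols $a_0,a_1\in\fA$ such that $\hcU^s_{a_0},\hcU^s_{a_1}\subset\hcU^s(0.3)$ and $\tfrac12\delta_{a_0}\le\delta_{a_1}\le 2\delta_{a_0}$. My plan combines a coverage argument for the middle strip $\hcU^s(0.3)$ by the partition $\{\hcU^s_a\}_{a\in\fA}$ with a dyadic pigeonhole on the sequence $\{\delta_a\}$.

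First, by Proposition~\ref{couplingdecompose} and the density estimate~\eqref{hcR dense}, the family $\{\hcU^s_a\}_{a\in\fA}$ consists of pairwise disjoint $s$-subrectangles that tile $\hcU$ modulo $\mu$. Along any unstable manifold $W$ fully crossing $\hcU$ they cut $W\cap\hcU$ into disjoint intervals of length approximately $\delta_a$ summing to $|W\cap\hcU|$. Set $\fA':=\{a\in\fA:\hcU^s_a\subset\hcU^s(0.3)\}$; since at most two cells can straddle each $s$-side of $\hcU^s(0.3)$, one has $\sum_{a\in\fA'}\delta_a\ge 0.4\,|W\cap\hcU|-(\text{two straddler contributions})$, which is bounded uniformly away from zero. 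Moreover, at coupling step $n$ each $\hcU^u_{n,j}$ is a smoothness component of $T^{-n}$ fully crossing $\hcU$, so uniform hyperbolicity (\textbf{H1}) together with bounded distortion (\textbf{H4}) give $\delta_a=O(\Lambda^{-n})$ for $a=(n,j)$. Combined with the exponential tail bound in Lemma~\ref{coupling1}, this produces cells of arbitrarily small diameter, and infinitely many of them lie entirely inside $\hcU^s(0.3)$, so $\fA'$ is infinite.

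Then I would apply a dyadic pigeonhole: list $\fA'=\{a_k\}_{k\ge 1}$ in decreasing order of $\delta_{a_k}$. If some consecutive pair satisfies $\delta_{a_k}/\delta_{a_{k+1}}\le 2$, take $(a_0,a_1):=(a_k,a_{k+1})$ and we are done. The main obstacle is the lacunary alternative in which $\delta_{a_k}>2\delta_{a_{k+1}}$ for every $k$: a summable positive sequence can decay super-geometrically, so the abstract pigeonhole alone does not suffice. To rule it out I would exploit the rigidity at a single coupling step. By bounded distortion of $T^{-n}$ on each component $\hcU^u_{n,j}$, the diameters $\{\delta_{(n,j)}\}_{j=1}^{N_n}$ at a fixed step $n$ are pairwise comparable up to a uniform distortion constant; for $n$ large enough, $\Lambda^{-n}$ is so small that any such cell centered inside $\hcU^s(0.3)$ lies entirely in $\hcU^s(0.3)$. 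Choosing an $n$ with $N_n\ge 2$ and with two of these cells positioned inside $\hcU^s(0.3)$, which is guaranteed by mixing of $\mu$ together with the coverage established above, yields a candidate pair whose ratio is already controlled by the distortion constant, and a final subdivision via iterating the coupling on those cells sharpens this ratio down to the required factor~$2$.
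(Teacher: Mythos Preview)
Your argument has a genuine gap in the treatment of the lacunary alternative. The claim that ``the diameters $\{\delta_{(n,j)}\}_{j=1}^{N_n}$ at a fixed step $n$ are pairwise comparable up to a uniform distortion constant'' is false for these systems. Bounded distortion in (\textbf{H4}) controls the Jacobian \emph{within} a single smoothness component, not across different components separated by singularities. Since $\|D_xT\|$ can blow up like $d(x,\cS_1)^{-\bq_0}$ near the singular set (see \eqref{def q0}), two components $\hcU^u_{n,j}$ and $\hcU^u_{n,j'}$ at the same step $n$ can have expansion factors differing by an arbitrarily large ratio, and hence their preimage widths $\delta_{(n,j)}$ and $\delta_{(n,j')}$ need not be comparable at all. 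The final sentence about ``iterating the coupling on those cells'' to sharpen the ratio to $2$ is not an argument; it does not explain what mechanism would prevent the ratios from staying large.

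What you are missing is the uniform smallness of each individual $\delta_a$, which the paper extracts from \eqref{no early intersection}: since $T^k\hcU\cap\hcU=\emptyset$ for $k\le\lfloor\log_\Lambda 100\rfloor+1$, every $\hcU^s_a$ occupies at most $1/100$ of the width of $\hcU$. This single bound kills the lacunary alternative immediately: if the $\delta_a$'s of cells meeting $\hcU^s(0.4)$ were all pairwise incomparable by a factor $>2$, then ordered decreasingly they would sum to at most $\delta_{a_1}(1+\tfrac12+\tfrac14+\cdots)\le 2\cdot\tfrac{1}{100}$ of the total width, contradicting the fact (from \eqref{hcR dense}) that the middle strip carries at least a $0.1$ fraction of the measure. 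The same smallness also gives the ``almost centric'' conclusion for free, since any cell of width $\le 1/100$ that intersects $\hcU^s(0.4)$ is automatically contained in $\hcU^s(0.3)$. Your coverage argument for the middle strip is on the right track, but you should replace the incorrect comparability-at-fixed-$n$ step with this uniform size bound.
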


\begin{proof} By \eqref{no early intersection}, every solid $s$-subrectangle $\hcU^s_a$
occupies at most $\frac{1}{100}$ portion of $\hcU$ with respect to the SRB measure $\mu$.
If there were no comparable symbols whose solid $s$-rectangles intersecting $\hcU^s(0.4)$, then
$\hcR\cap \hcU^s(0.4)\subset \bigcup_{a\in \fA} \hcU^s_a\cap \hcU^s(0.4)$
occupies at most
$$
1.01\times \frac{1}{100}\left(1+\frac12+ \frac{1}{2^2}+\dots\right)<0.03
$$
portion of $\hcU$. However, by \eqref{hcR dense}, $\hcR\cap \hcU_{0.4}^s$ should at least take up
$(0.2-0.01)\times 0.99>0.1$ portion of $\hcU$, which is a contradiction.
Thus, there exist two comparable symbols $a_0$ and $a_1$ such that both
$\hcU^s_{a_0}$ and $\hcU^s_{a_1}$ intersect $\hcU^s(0.4)$,
and hence are both contained in $\hcU^s(0.3)$.
\end{proof}

\begin{remark} By Lemma \ref{lem: abs cts} and the fact that $\hF$ preserves $\mu$,
one can also use $\delta_a$ as the (rough) $s$-diameter of $\hcU^u_a=\cU(\hcR^u_a)$,
by weakening the factors in \eqref{def delta_a} to $0.9\sim 1.1$.
\end{remark}

Next, we fix two symbols $a_0, a_1\in \fA$ given by Lemma \ref{lem u diameter},
and define a periodic sequence
\beq\label{def ab}
\ab=
\overline{(\underbrace{a_0, a_0, \dots, a_0}_{N_0}, a_1, \underbrace{a_0, a_0, \dots, a_0}_{N_0})}\in \fA^\IZ,
\eeq
for a sufficiently large $N_0\in \IN$.
By Lemma \ref{lem period},
$x_0=\pi(\ab)$ is a periodic point of $(\hcR, \hF)$ with period $(2N_0+1)$,
and thus a periodic point of $(M, T)$ with period $(2N_0\tau_0+\tau_1).$\footnote{
$(2N_0\tau_0+\tau_1)$ need not be the least period.
}

Note that
the points $x_0$ and $\hF(x_0)=\pi(\sigma(\ab))\ne x_0$ both belong to
$\hcR^s_{a_0}\cap \hcR^u_{a_0}\subset \hcR$.
We denote the solid rectangles
$\hcU_{a_0}^s:=\cU(\hcR^s_{a_0})$,
$\hcU_{a_0}^u:=\cU(\hcR^u_{a_0})$, and
$\hcU_{a_0}:=\hcU_{a_0}^s\cap \hcU_{a_0}^u$,
then the points $x_0$ and $\hF(x_0)$ both belong to $\hcU_{a_0}$.

We denote by $\cO(x_0)$ the $T$-orbit of the periodic point $x_0$ in $M$.

\begin{lemma}\label{lem fully cross}
If $x\in \cO(x_0)\cap \hcU_{a_0}$, then $W^u(x)$ fully crosses $\hcU^s_{a_0}$ and
$W^s(x)$ fully crosses $\hcU^u_{a_0}$.
\end{lemma}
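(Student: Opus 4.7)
The plan is to split into two cases depending on whether $x$ is a symbolic return point of $\hF$ or an intermediate iterate of $T$ between returns, and in each case to reduce to the simpler claim that $W^u(x)$ fully crosses $\hcU$. The assertion for $W^s(x)$ and $\hcU^u_{a_0}$ then follows by applying the same reasoning to $T^{-1}$, using time-reversibility from Assumption (\textbf{H2}); I therefore focus on $W^u(x)$ and $\hcU^s_{a_0}$.

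First, a geometric reduction. Since $\hcU^s_{a_0}$ is a solid $s$-subrectangle of $\hcU$, its $u$-sides coincide with those of $\hcU$, while its $s$-sides are two stable manifolds lying strictly between the two $s$-sides of $\hcU$ in the $u$-direction. Consequently, any unstable manifold passing through a point of $\hcU^s_{a_0}$ that fully crosses $\hcU$ automatically crosses both $s$-sides of $\hcU^s_{a_0}$. Thus it suffices to show that $W^u(x)$ fully crosses $\hcU$. Write $x=T^i(x_0)$ and let $k$ be the unique integer with $t_k\le i<t_{k+1}$; set $y=\hF^k(x_0)\in\hcR$ and $n=i-t_k\ge 0$. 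If $n=0$ then $x=y\in\hcR$, so $W^u(x)\in\hGamma^u$ fully crosses $\hcU$ by the definition of the $u$-collection, and the claim follows at once.

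For $n\ge 1$, I would exploit uniform hyperbolicity. Since $y\in\hcR$, the maximal unstable manifold $W^u(y)$ fully crosses $\hcU$, hence in particular has length at least the $u$-diameter of $\hcU$. Choose a short sub-segment $U_0\subset W^u(y)$ centered at $y$ with $|U_0|\approx C\Lambda^{-n}\delta_{a_0}$, where $\delta_{a_0}$ is the $u$-diameter of $\hcU^s_{a_0}$ and $C$ is a large absolute constant. Because $\cO(x_0)\cap\cS_{\pm 1}=\emptyset$ (a periodic orbit cannot meet the singularity set), the finite set $\{T^j(y):0\le j\le n\}$ has a uniform positive distance $d_0$ from $\cS_{\pm 1}$; for $|U_0|$ small enough, each iterate $T^j(U_0)$ remains within $d_0/2$ of $T^j(y)$ and is therefore a smooth connected curve. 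Uniform expansion together with the bounded distortion estimate \eqref{distor0} then gives that $T^n(U_0)\subset W^u(x)$ is a smooth unstable curve through $x$ of length $\ge C\delta_{a_0}$, spread roughly evenly on both sides of $x$. Taking $C$ large enough (say $C=4$) forces $T^n(U_0)$ to extend past both $s$-sides of $\hcU^s_{a_0}$ in the $u$-direction, completing the proof.

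The hardest step is the quantitative coordination in the case $n\ge 1$: the initial length $|U_0|\approx C\Lambda^{-n}\delta_{a_0}$ must be small enough that $T^j(U_0)$ stays near $T^j(y)$ for every $j\le n$, yet large enough that $T^n(U_0)$ exceeds $C\delta_{a_0}$. Since $n$ is bounded above by the maximal return time among the finitely many symbols appearing in the periodic sequence $\ab$, the trade-off is feasible, but its clean verification requires combining the expansion and distortion bounds from Assumptions (\textbf{H1}) and (\textbf{H4}) with the uniform separation between $\cO(x_0)$ and $\cS_{\pm 1}$.
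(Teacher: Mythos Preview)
Your overall plan---split into $\hF$-return points and intermediate $T$-iterates, then handle the stable case by time-reversal---matches the paper's. The $\hF$-return case is fine. The gap is in the intermediate case $n\ge1$: you choose $|U_0|\approx C\Lambda^{-n}\delta_{a_0}$ and assert this is small enough for every $T^jU_0$ to stay clear of $\cS_{\pm1}$, but you never verify it. By \eqref{def q0} the norm $\|DT\|$ blows up near $\cS_1$, so the actual expansion along the orbit segment can vastly exceed $\Lambda^n$, and the neighbourhood on which $T^n$ is smooth around $y$ may be far smaller than $C\Lambda^{-n}\delta_{a_0}$. Saying that $n$ is bounded and $d_0>0$ does not close this, because nothing in your argument relates $d_0$ to $\delta_{a_0}$; the ``trade-off'' you flag as the hardest step is precisely the step that is missing.

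The paper sidesteps this by using three structural facts of the construction rather than an abstract distance-to-singularity argument. First, $T^{\tau(a)}$ is smooth on the \emph{entire solid rectangle} $\hcU^s_a$ by the very definition of the decomposition in \S\ref{sec attraction}; hence the unstable segment through $y=\hF^k(x_0)$ inside $\hcU^s_a$ is carried forward without being cut, and there is nothing to check about singularities. Second, the ``no early intersection'' property \eqref{no early intersection} forces any intermediate return index to satisfy $n\ge\lfloor\log_\Lambda100\rfloor+1$, so the guaranteed expansion factor is at least $100$. Third, the ``almost centric'' choice of $a_0,a_1$ in Lemma~\ref{lem u diameter} (together with the comparability $\delta_{a_1}\asymp\delta_{a_0}$) gives a definite lower bound $r^u(\hF^k(x_0))\ge0.3\,\delta_{a_i}$, which after the factor-$100$ expansion yields $r^u(x)\ge15\,\delta_{a_0}>1.01\,\delta_{a_0}$. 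Your argument can be repaired by substituting these three ingredients for the $d_0$-neighbourhood reasoning.
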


\begin{proof}
We only prove this lemma for the unstable manifolds. The case for the stable manifolds would be similar.

Let $\delta_{a_0}$ be the $u$-diameter of $\hcU^s_{a_0}$,
and recall that  $r^u(x)$ the distance from $x$ to the nearest endpoint of $W^u(x)$.
By Lemma \ref{lem: abs cts}, an unstable manifold $W^u(x)$ must fully cross $\hcU^s_{a_0}$
if $x\in \hcU_{a_0}^s$ and $r^u(x)>1.01\delta_{a_0}$.

Note that the $\hF$-orbit of $x_0$ are given by
$$
\hF^k(x_0)=
\begin{cases}
T^{k\tau_0}(x_0), \ & \ 0\le k\le N_0, \\
T^{(k-1)\tau_0+\tau_1}(x_0), \ & \ N_0<k\le 2N_0,
\end{cases}
$$
which belong to $\hcR^s_{a_0}$
except that $\hF^{N_0}(x_0)\in \hcR^s_{a_1}$.
Either way, $\hF^k(x_0)$ all belong to $\hcR$,
and thus their unstable manifolds fully cross $\hcU^s_{a_0}$.
Moreover, by the almost centric property in Lemma~\ref{lem u diameter},
we have that $r^u(\hF^k(x_0))\ge 0.3 \delta_{a_0}$ for $k=0, \dots, N_0-1, N_0+1, \dots, 2N_0$,
and $r^u(\hF^{N_0}(x_0))\ge 0.3 \delta_{a_1}$.
There are three cases for other points $x=T^m(x_0)\in \cO(x_0)\cap \hcU_{a_0}$:
\begin{itemize}
\item[(i)] If $k\tau_0< m< (k+1)\tau_0$ for $k=0, 1, \dots, N_0-1$: in this case, we actually have
$m\ge k\tau_0+\lfloor \log_{\Lambda} 100\rfloor+1$ by \eqref{no early intersection}.
Since $T^{k\tau_0}(x_0)=\hF^k(x_0)\in \hcR^s_{a_0}$
and $T^{\tau_0}$ is smooth on $\hcU^s_{a_0}=\cU(\hcR^s_{a_0})$,
$r^u(x)$ is no less than $100\ r^u(\hF^k(x_0))\ge 30\delta_{a_0}$.
\item[(ii)] If $N_0\tau_0<m<N_0\tau_0+\tau_1$:
we have $m\ge N_0\tau_0+\lfloor \log_{\Lambda} 100\rfloor+1$ by \eqref{no early intersection}.
Since $T^{N_0\tau_0}(x_0)=\hF^{N_0}(x_0)\in \hcR^s_{a_1}$
and $T^{\tau_1}$ is smooth on $\hcU^s_{a_1}=\cU(\hcR^s_{a_1})$,
$r^u(x)$ is no less than $100\ r^u(\hF^{N_0}(x_0))\ge 30\delta_{a_1}\ge 15\delta_{a_0}$ by
the comparable property in Lemma~\ref{lem u diameter}.
\item[(iii)] $(N_0+k)\tau_0+\tau_1<m<(N_0+k+1)\tau_0+\tau_1$ for $k=0, 1, \dots, N_0-1$: this case
is similar to Case (i).
\end{itemize}
In all cases, $W^u(x)$ fully crosses $\hcU^s_{a_0}$.
\end{proof}

At last, we let
$$
x'_0:=[x_0, \hF(x_0)]=W^u(x_0)\cap W^s(\hF(x_0)), \ \ x''_0:=[\hF(x_0), x_0]=W^u(\hF(x_0))\cap W^s(x_0),
$$
and we denote by $\cU_0^*$ the solid rectangle with four corner points $\{x_0, x'_0, \hF(x_0), x''_0\}$
in the counter-clockwise direction.
Note that $\cU_0^*\subset \hcU_{a_0}$.
Let $\delta_0^*$ be the diameter of $\cU_0^*$, and $d^*_0$ be the minimum of
the distances of $\partial \cU_0^*$ and $\partial \hcU_{a_0}$ in $s$- and $u$-direction.
By taking $N_0$ in \eqref{def ab} sufficiently large, we may assume that $\delta_0^*<(\Lambda-1) d^*_0$.

We then set
\beqn
\cD^s:=\{W^s(x)\cap \cU_0^*: \ x\in \cO(x_0)\cap\hcU_{a_0}\}, \ \text{and}\
\cD^u:=\{W^u(x)\cap \cU_0^*: \ x\in \cO(x_0)\cap\hcU_{a_0}\},
\eeqn
that is, $\cD^{s/u}$ are the families of intersection curves of
the stable/unstable manifolds of periodic points in $\cO(x_0)\cap\hcU_{a_0}$ within $\cU_0^*$.
Note that both families $\cD^s$ and $\cD^u$ are finite (could be empty).
By Lemma~\ref{lem fully cross},
each $W\in \cD^{s/u}$ would fully cross $\cU_0^*$.

\begin{lemma}\label{lem partition}
The family $\cD^s$ is $T$-invariant in the following sense:
for any $k\in \IN$ and any $W\in \cD^s$,
$T^k W\cap \cU_0^*$ is either empty, or it is
a sub-curve of some  $W'\in \cD^s$.
Similarly, the family $\cD^u$ is $T^{-1}$-invariant.
\end{lemma}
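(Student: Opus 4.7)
The plan is to combine the $T$-invariance of the stable foliation with the finiteness of $\cO(x_0) \cap \hcU_{a_0}$. Since $x_0$ is a periodic point whose orbit stays away from the singular set (we ensured $\hcU \cap \cS_{\pm 1} = \emptyset$ and the periodic point was extracted by the coding in Section~\ref{sec symbolic coding}), $T$ is smooth on a neighborhood of every point of $\cO(x_0)$. Hence for any $x \in \cO(x_0)$ and any $k \geq 1$, $T^k$ carries the stable manifold of $x$ into the stable manifold of $T^k x$, i.e. $T^k(W^s(x)) \subset W^s(T^k x)$, and $T^k x \in \cO(x_0)$.

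Given $W = W^s(x) \cap \cU_0^*$ in $\cD^s$ with $x \in \cO(x_0) \cap \hcU_{a_0}$, suppose $T^k W \cap \cU_0^* \neq \emptyset$. Then $T^k W \subset W^s(T^k x)$, so that $T^k W \cap \cU_0^* \subset W^s(T^k x) \cap \cU_0^*$. The key reduction is that $W^s(T^k x) \cap \cU_0^*$ is itself an element of $\cD^s$; since $T^k x \in \cO(x_0)$ is automatic, the entire content lies in proving the containment $T^k x \in \hcU_{a_0}$. To establish this, pick $y = T^k z \in T^k W \cap \cU_0^*$ with $z \in W \subset \cU_0^*$. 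Both $T^k x$ and $y$ lie on the stable manifold $W^s(T^k x)$, and by the uniform contraction of Assumption (\textbf{H1})(2),
\[
d_{W^s(T^k x)}(T^k x, y) \leq \Lambda^{-k}\, d_{W^s(x)}(x, z).
\]
Using $z \in \cU_0^*$, $x \in \hcU_{a_0}$, and the size condition $\delta_0^* < (\Lambda - 1) d_0^*$ built into the choice of $\cU_0^*$, one verifies that for every $k \geq 1$ this distance is strictly less than $d_0^*$, which forces $T^k x$ to lie inside $\hcU_{a_0}$. Hence $W^s(T^k x) \cap \cU_0^*$ is the desired element $W' \in \cD^s$ containing $T^k W \cap \cU_0^*$ as a sub-curve.

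The argument for $\cD^u$ is completely symmetric: one replaces $T$ with $T^{-1}$ and swaps the roles of stable and unstable manifolds, using that $T^{-1}$ preserves $\cO(x_0)$, is smooth in a neighborhood of it, and contracts unstable manifolds by the factor $\Lambda^{-1}$. The main obstacle is the geometric confinement in the middle paragraph: when $x$ lies in $\hcU_{a_0} \setminus \cU_0^*$, the stable distance $d_{W^s(x)}(x, z)$ can a priori be as large as the stable diameter of $\hcU_{a_0}$, so the argument depends crucially on the built-in separation constant $d_0^*$ and on the hyperbolic rate $\Lambda$. This is precisely why the relation $\delta_0^* < (\Lambda - 1)\, d_0^*$ was imposed in the construction of $\cU_0^*$; it guarantees that the $T^k$-image of any $W \in \cD^s$ that re-enters $\cU_0^*$ must land on a stable leaf already represented in $\cD^s$.
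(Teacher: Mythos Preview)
Your proof is correct and follows essentially the same approach as the paper's. The paper phrases the argument by extending $W$ on one side to a curve $W_1$ that terminates at the periodic point $x_1\in\cO(x_0)\cap\hcU_{a_0}$, bounding $|W_1|\le \delta_0^*+d_0^*$, and then using $|T^kW_1|\le \Lambda^{-1}(\delta_0^*+d_0^*)<d_0^*$ to force $T^kx_1\in\hcU_{a_0}$; your version is the same computation written in terms of the stable distance $d_{W^s(x)}(x,z)$ rather than the length of the extended curve.
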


\begin{proof}
We only prove for the $T$-invariance of the family $\cD^s$, and
then $T^{-1}$-invariance of $\cD^u$ can be proved in a similar fashion.

For any curve $W\in \cD^s$,
then we can extend $W$ in one side until it hits a periodic point $x_1\in \cO(x_0)\cap\hcU_{a_0}$.
We denote the extended curve by $W_1$,
then the length $|W_1|\le \delta_0^*+d_0^*$.
For any $k\in \IN$, we have that $|T^kW_1|\le \Lambda^{-1}(\delta_0^*+d_0^*)<d_0^*$.
Therefore, if $\emptyset\ne T^k W\cap \cU_0^*\subset T^k W_1\cap \cU_0^*$,
then the periodic point $x_2=T^k(x_1)\in \cO(x_0)$, as one endpoint of $T^kW_1$,
has distance to $\cU_0^*$ less than $d_0^*$. Thus $x_2\in \hcU_{a_0}$,
and $T^k W\cap \cU_0^*$ is a sub-curve of $W^s(x_2)\cap \cU_0^*$.
\end{proof}

The families $\cD^{s/u}$ divide the solid rectangle $\cU_0^*$ into finitely many solid sub-rectangles.
We choose a such solid sub-rectangle, and denote by $\cU^*$. By Lemma~\ref{lem partition},
$\cU^*$ is perfect, as for any $k\in \IN$,
the $T^k$-images of $s$-sides of $\cU^*$, as well as $T^{-k}$-images of $u$-sides of $\cU^*$,
do not intersect $\mathrm{int}(\cU^*)$.

In our construction, it is not hard to see that
there are a positive measure of stable manifolds surrounding the $s$-sides of $\cU^*$,
as well as a positive measure of unstable manifolds surrounding the $u$-sides of $\cU^*$,
both of which fully cross $\cU^*$.
This finished the proof of Lemma \ref{perfectcU}.\qed

Now we will add the $2$ stable manifolds of the boundary components of $\cU^*$ into the singular set $\cS_1$, as well as the $C^{\infty}$ extension of these $2$ curves into $\cS_1$, such that each one of them ends at another singular curve in $\cS_0$. Similarly, we will add the $2$ unstable manifolds of the boundary components of $\cU^*$ into the singular set $\cS_{-1}$, as well as the $C^{\infty}$ extension of these $2$ curves into $\cS_{-1}$, such that each one of them ends at another singular curve in $\cS_0$. Now we define the new singular sets as $\cS_{\pm 1}^*$. One can check that $T: M\setminus\cS_1^*\to M\setminus \cS_{-1}^*$ satisfies $(\textbf{H2})$. It follows from \cite{KS86} that condition (\ref{def q0}) implies that there exist stable manifold  and unstable manifold
for $m$-a.e. $x\in M$.
For any $n\geq 1$, let $\ds\cS^*_{\pm n}=\cup_{m=0}^{n-1} T^{\mp m}\cS^*_{\pm 1}$
be the modified singularity set of $T^{\pm n}$,
and $\ds\cS^*_{\pm \infty}=\cup_{m\geq 0} \cS^*_{\pm m}$.
Note that any new maximal stable/unstable manifold $W^{s/u}$ is an open connected curve in $M\backslash \cS^*_{\pm \infty}$,
usually with two endpoints in $\cS^*_{\pm \infty}$.
\begin{proposition}\label{exist regular magnet}
There is a perfect magnet $\cR^*$ in the magnet $\hcR$.
\end{proposition}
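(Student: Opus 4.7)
The plan is to take $\cR^*$ to be the hyperbolic product set associated with the perfect rectangle $\cU^*$ once the new stable/unstable manifolds are constructed with respect to the enlarged singularity sets $\cS^*_{\pm 1}$. Since Lemma \ref{perfectcU} already supplies a perfect solid rectangle $\cU^*\subset \hcU_{a_0}\subset \hcU$, and the four solid sides of $\cU^*$ together with their smooth extensions have been incorporated into $\cS^*_{\pm 1}$, no maximal stable manifold of $T$ relative to $\cS^*_{+\infty}$ and no maximal unstable manifold relative to $\cS^*_{-\infty}$ can cross the boundary of $\cU^*$. Consequently, a maximal $(W^{u})$-manifold meeting $\cU^*$ either lies entirely inside it or is terminated on $\partial \cU^*$, and similarly for $W^s$.

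First, I would define
\[
\Gamma^{u}(\cR^*):=\{W^{u}(x)\cap \cU^*:\ W^{u}(x)\ \text{fully crosses}\ \cU^*\},\qquad
\Gamma^{s}(\cR^*):=\{W^{s}(x)\cap \cU^*:\ W^{s}(x)\ \text{fully crosses}\ \cU^*\},
\]
where the stable/unstable manifolds are taken in the modified system $T:M\setminus \cS^*_{1}\to M\setminus\cS^*_{-1}$, and then set
\[
\cR^*\ :=\ \bigcup_{W^{s}\in \Gamma^{s}(\cR^*),\ W^{u}\in \Gamma^{u}(\cR^*)} W^{s}\cap W^{u}.
\]
By construction $\cR^*$ is a hyperbolic product set and its solid hull coincides with $\cU^*$.

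Next I would verify that $\cR^*$ is a magnet. The diameter bound is immediate, since $\cU^*\subset \hcU$ already has diameter less than $\delta_0$. To obtain $\mu(\cR^*)>0$, I would invoke the remark at the end of the construction of $\cU^*$: there is a positive $\mu$-measure set of stable manifolds adjacent to the $s$-sides of $\cU^*$ and of unstable manifolds adjacent to the $u$-sides, and each of these manifolds fully crosses $\cU^*$ in the original system. Since the $C^{\infty}$ extensions that were added to $\cS^*_{\pm 1}$ terminate on curves of $\cS_0$, these fully-crossing manifolds remain fully-crossing after the modification; only those manifolds that previously ran through $\partial \cU^*$ are truncated. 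Combining this with Lemma \ref{lem su-prod} then yields $\mu(\cR^*)>0$. One should also check that the modified families $\cW^{s/u}$ obtained from $\cS^*_{\pm\infty}$ still satisfy \textbf{(H4)}, which is routine because we only added finitely many stable/unstable boundary curves together with their extensions.

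Finally, perfectness of $\cR^*$ reduces to perfectness of $\cU^*$, which is exactly what Lemma \ref{perfectcU} and Lemma \ref{lem partition} guarantee: the $s$-sides of $\cU^*$ are sub-arcs of stable manifolds of the periodic orbit $\cO(x_0)$, so their forward $T$-iterates never re-enter $\mathrm{int}(\cU^*)$, and the analogous statement holds for the $u$-sides under backward iteration. The main obstacle in this argument is the positivity $\mu(\cR^*)>0$ after cutting by the newly added singular curves; the key is that the added curves lie \emph{outside} $\mathrm{int}(\cU^*)$ (they form $\partial \cU^*$ and extensions into $\cS_0$), so they do not destroy the fully-crossing family that already existed within $\cU^*$.
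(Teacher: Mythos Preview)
Your proposal is correct and follows essentially the same approach as the paper: both take the perfect rectangle $\cU^*$ from Lemma~\ref{perfectcU}, define $\cR^*$ as the product of the families of stable and unstable manifolds that fully cross $\cU^*$, and observe that perfectness of $\cR^*$ reduces to perfectness of its solid hull $\cU^*$. Your write-up is in fact more careful than the paper's, which omits the verification that $\mu(\cR^*)>0$ and that the diameter bound for a magnet is met; your use of the positive-measure families of fully-crossing manifolds near $\partial\cU^*$ together with Lemma~\ref{lem su-prod} is exactly the right way to fill that gap.
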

 \begin{proof}
 Using results in Lemma \ref{perfectcU}, we define our final magnet $\cR^*$ by  thickening $\cU^*\cap \hcR$ in the following way: we consider  all  stable  manifolds that fully cross the unstable boundary of $\cU^*$, and denote the collection of their intersections with $\cU^*$  as  $\Gamma^s$; similarly, we consider all unstable manifolds that fully cross the stable boundary of $\cU^*$, and denote the collection of their intersection with $\cU^*$
 as $\Gamma^u$. Then we define
$$\cR^*=\Gamma^s\cap \Gamma^s$$
It is clear that $\cR^*$ is perfect as its solid rectangular hull is exactly $\cU^*$.

 Thus we have complete the proof of the proposition \ref{exist regular magnet}.

\end{proof}

\subsection{The First Return to $\cR^*$ is Proper}
In this subsection, we will prove the perfect magnet $\cR^*$ is our final objective magnet in Theorem \ref{main}.

By Proposition \ref{couplingdecompose}, we know that the hyperbolic set $\cR^*$ also has two unique decompositions \beq\label{firstdecom}\cR^*=\bigcup_{n\geq 1}\bigcup_{j=1}^{N_n}\cR^s_{n,j}=\bigcup_{n\geq 1}\bigcup_{j=1}^{N_n}\cR^u_{n,j},\eeq
where for any $n\geq 1$, any $j=1,\cdots, N_n$, with $N_n\in \mathbb{N}\cup\{+\infty\}$, the set $\cR^{s/u}_{n,j}$ is a $s/u$-subrectangle of $\cR^*$, and $T^n\cR^s_{n,j}=\cR^u_{n,j}$.

We recall that $\cU^{s/u}_{n,j}$ is defined to be the minimal solid ($s/u-$)rectangle containing $\cR^{s/u}_{n,j}$. Clearly, we also have $T^n\cU^s_{n,j}=\cU^u_{n,j}$. Now we define two subsets $\cU^{s/u}\subset \cU^*$ in the following:
$$\cU^s:=\bigcup_{n\geq 1}\bigcup_{j=1}^{N_n}\cU^s_{n,j},\,\,\,\,\,\,\,\cU^u:=\bigcup_{n\geq 1}\bigcup_{j=1}^{N_n}\cU^u_{n,j}$$

Note that $\cU^*\setminus \cU^s$ contains at most countably many connected, solid $s$-subsets, denoted the set of them by $\mbox{Gap}_s^{(1)}=\{G_m^1, m\geq 1\}$, which we call {\it{s-gaps of type I}}.
In addition, $\cU^s\setminus \Gamma^s$ also contains countably many connected, solid $s$-subsets. We call them {\it{ s-gaps of type II}}, and denote the set of them by
$\mbox{Gap}_s^{(2)}=\{G_m^2, m\geq 1\}$.
We call each element $G\in\mbox{Gap}_s:=\mbox{Gap}_s^{(1)}\cup\mbox{Gap}_s^{(2)}$ a s-gap of $\cU^*$. Let $\cG^1=\cup_{m\geq 1} G^1_m$ and  $\cG^2=\cup_{m\geq 1} G^2_m$, the by the definitions we have $$\cG^2\subset\cU^s,\ \ \ \  \cU^*=\cG^1\cup\cU^s.$$

Similarly, by time reversibility of the map, we can define the u-Gap $G\in \mbox{Gap}_u:=\mbox{Gap}_u^{(1)}\cup\mbox{Gap}_u^{(2)}$.
We call $U$ is a solid rectangle in $\cU^*$, if $U$ is bounded by a pair of stable manifolds and a pair of unstable manifold. The following lemma describes the dynamical property of the gaps.

\begin{lemma}\label{gap2} For any s-gap $G$ of $\cU^*$,  if there exist a $s$-rectangle $\hG\subset G$, $k\geq 1$, such that $T^k \hG$ is a solid rectangle in the interior of $\cU^*$, then there exists a s-gap $G'$ of $\cU^*$,  such that  $T^k \hG \subset G'$. i.e., any solid s-rectangle in a gap  will only return to another s-gap of $\cU^*$ if it returns to the interior of $\cU^*$ as a solid rectangle.
\end{lemma}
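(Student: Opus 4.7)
The plan is to argue by contradiction, exploiting the perfectness of $\cU^*$ together with the fact that two distinct stable manifolds cannot cross transversely.

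First, I would assume that $T^k\hG$ is not contained in any single $s$-gap of $\cU^*$. Each $s$-gap is a connected component of $\cU^*\setminus\Gamma^s$, so by connectedness of $T^k\hG$ this forces $T^k\hG\cap\Gamma^s\neq\emptyset$. I would then choose $y\in T^k\hG$ lying on a full-crossing stable manifold $W^s\in\Gamma^s$. Because $T^k\hG$ is a solid rectangle strictly inside $\cU^*$ while $W^s$ terminates on $\partial^u\cU^*$, the piece $W^s\cap T^k\hG$ must cross $T^k\hG$ from one unstable side to the other.

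Next I would pull this segment back. The hypothesis that $T^k\hG$ is a solid rectangle implies that $T^k$ is a diffeomorphism on $\hG$, so $T^{-k}(W^s\cap T^k\hG)$ is a stable segment inside $\hG$ crossing it from one $u$-side to the other. Since $\hG$ is a solid $s$-subrectangle of $\cU^*$, its $u$-sides lie on $\partial^u\cU^*$; equivalently, by perfectness the set $T^{-k}(\partial W^s)\subset T^{-k}(\partial^u\cU^*)$ avoids $\mathrm{int}(\cU^*)$. Either viewpoint shows that the maximal stable manifold $\widehat W^s$ containing the pulled-back segment meets $\cU^*$ in a curve with both endpoints on $\partial^u\cU^*$. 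Since distinct stable manifolds cannot cross transversely, $\widehat W^s\cap\cU^*$ cannot exit through $\partial^s\cU^*$, so it fully crosses $\cU^*$ and hence belongs to $\Gamma^s$.

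The contradiction would then follow from the definition of the gap $G$. If $G$ is of type II then $G\subset\cU^s\setminus\Gamma^s$ is disjoint from $\Gamma^s$ by definition. If $G$ is of type I then $G\subset\cU^*\setminus\cU^s$; but every $W\in\Gamma^s$ meets $\cR^*\subset\cU^s$, and being a stable manifold, $W$ cannot cross the stable sides of whichever $\cU^s_{n,j}$ it enters, so $W\subset\cU^s$, and again $G\cap\Gamma^s=\emptyset$. In either case $\widehat W^s\cap\hG\subset\widehat W^s\cap G=\emptyset$, contradicting the fact that $\widehat W^s$ contains a nontrivial segment inside $\hG$. Therefore $T^k\hG$ must lie in a single $s$-gap $G'$. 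The main obstacle I expect is precisely the pullback step: without the perfectness of $\cU^*$, the curve $T^{-k}(W^s)$ could terminate inside $\cU^*$ and fail to be a full crossing, which would collapse the whole argument; perfectness is exactly what forbids this.
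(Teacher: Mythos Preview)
Your argument is correct and follows a genuinely different route from the paper. The paper argues \emph{forward} and treats the two gap types separately: for a type~I gap it observes that almost every $x\in G$ has $W^s(x)$ too short to fully cross $\cU^*$, and then uses smoothness of $T^k$ on $\hG$ to argue that this shortness persists in $T^k\hG$; for a type~II gap it invokes the specific structure of the containing rectangle $\cU^s_{n,j}$ and the return map $T^n$. You instead pull a hypothetical full-crossing leaf $W^s\in\Gamma^s$ \emph{backward} through the diffeomorphism $T^k|_{\hG}$, obtaining a stable segment $V\subset\hG$ whose endpoints lie on $\partial^u\hG\subset\partial^u\cU^*$, so that $V$ itself fully crosses $\cU^*$ and hence belongs to $\Gamma^s$ --- contradicting $G\cap\Gamma^s=\emptyset$. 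This yields a single uniform argument for both gap types, with the type~I/type~II distinction relegated to the easy final check that gaps are disjoint from $\Gamma^s$.

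Two small remarks. First, your first viewpoint (that $\hG$ is an $s$-subrectangle of $\cU^*$, so $\partial^u\hG\subset\partial^u\cU^*$) already gives the full crossing of $V$ and does not need perfectness; your perfectness-based alternative would require $T^{-k}W^s$ to remain connected all the way out to $T^{-k}(\partial^u\cU^*)$, which is not guaranteed if $W^s$ meets $\cS^*_{-k}$. Second, the one step that does deserve a word --- and where perfectness actually enters --- is the implicit claim that $V$ lies in a \emph{single} maximal stable manifold $\widehat W^s$: one must check that $\mathrm{int}(V)$ avoids $\cS^*_{+\infty}$, which follows from smoothness of $T^k$ on $\hG$ (ruling out the original $\cS_k$) together with perfectness (ruling out $T^{-m}\partial^s\cU^*$ for $0\le m<k$, since otherwise $T^ky$ would be forced outside $\mathrm{int}(\cU^*)$). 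So perfectness is indeed relevant, just at a slightly different point than you anticipated.
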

\begin{proof}
We first consider s-gap of type I. For any $G\in \mbox{Gap}_s^{(1)}$, assume there exist a s-subrectangle $\hG\subset G$,  $k\geq 1$, such that $T^k \hG$ intersects $\cU^*$ nontrivially at a solid rectangle $U$. Assume $U$ does not belong to any gap, then $U$ can be extended horizontally to a $u$-subrectangle of $\cU^*$. By the definition of the gap, we know that for almost all $x\in G$, the stable manifold $W^s(x)$ could not stretch to both unstable boundary of the solid s-rectangle $G$. Note that $T^k$ is continuous on the solid rectangle $\hG$, thus $T^k \hG$ is a solid rectangle containing points with short stable manifolds that could not stretch to both sides of $T^k \hG$. This implies that $\Gamma^s\cap T^k \hG$ is an empty set. Thus $T^k \hG$ is contained in a s-gap of $\cU^*$.

Next we consider s-gaps of type II.
By the definition of $\mbox{Gap}_s^{(2)}$, we know that for each $G\in \mbox{Gap}_s^{(2)}$, there is a solid  $s$-rectangle $\cU_{n,j}^s$, such that $G\subset \cU_{n,j}^s$. Note that $T^n$ is continuous on the solid $s$-rectangle $\cU_{n,j}^s$, and $\Gamma^s\cap \cU_{n,j}^s$ is mapped to the set of all the stable manifolds in $T^n \cU_{n,j}^s$ which are fully stretched in  $T^n \cU_{n,j}^s$ in the stable direction. Here we call the countable regions in $T^n\cU_{n,j}^s\setminus T^n\Gamma^s$ {\it{the s-gaps of }} $T^n \cU_{n,j}^s$. One can check that any s-gap of $T^n \cU_{n,j}^s$ only contains point with  short stable manifold that could not fully stretch in $T^n\cU_{n,j}^s$. Thus it can not contain any stable manifolds in $\Gamma^s$. This implies that it must belong to some s-gap $G'$ of  $\cU^*$. Note that $T^n G'$ is a gap of $T^n \cU_{n,j}^s$, thus it must belongs to a s-gap of $\cU^*$.
\end{proof}

\begin{lemma}\label{Unn}
 $T^n\cR^s_n$ returns to $\cR^*$ for the first time: $T^k\cR^s_n\cap \cR^*$ is either empty or trivial set (with respect to measure $\mu$), for all $n\geq 1$ and $k=1,\cdots, n-1$.
\end{lemma}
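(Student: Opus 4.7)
My plan is to argue by contradiction, using the perfect property of $\cU^*$ together with the uniqueness of the coupling decomposition given in Proposition~\ref{couplingdecompose}. Suppose $\mu(T^k\cR^s_{n,j}\cap\cR^*)>0$ for some $1\le k\le n-1$; then there is a positive-$\mu$ set of points $y\in\cR^s_{n,j}$ with $T^k y\in\cR^*$, and I will show that this forces the coupling lemma to couple a positive piece of the measure at step $k$ rather than $n$.

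For each such $y$, the unstable manifold $W^u(y)$ fully crosses $\cU^*$, with its two endpoints lying on $\partial^s\cU^*$. By the perfect property of $\cU^*$, one has $T^k(\partial^s\cU^*)\cap\mathrm{int}(\cU^*)=\emptyset$, so the $T^k$-images of the two endpoints of $W^u(y)$ lie outside $\mathrm{int}(\cU^*)$. Let $C$ be the connected component of $T^kW^u(y)\cap\cU^*$ containing $T^k y$. Since $C$ is an unstable arc containing an interior point of $\cU^*$ but with its extension terminating outside $\cU^*$, and since an unstable curve cannot cross the parallel unstable boundary $\partial^u\cU^*$, $C$ must exit $\cU^*$ on both sides by crossing $\partial^s\cU^*$; hence $C$ fully crosses $\cU^*$.

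A full crossing of $\cU^*$ by $C=T^k W_1$, where $W_1\subset W^u(y)$, means that under the coupling algorithm applied to $\cG(\cR^*)$, the unstable curve $W^u(y)$ carries a positive-measure piece that properly couples with $\cR^*$ at step $k$. However, by construction $y\in\cR^s_{n,j}$ belongs to the sub-family $\cW_n$ whose first proper coupling occurs at step $n$. Because the decomposition $\cG(\cR^*)=\sum_{m\ge1}(\cW_m,\mu_m)$ in Proposition~\ref{couplingdecompose} is disjoint modulo $\mu$, this directly contradicts the assumed positivity, yielding the lemma.

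The main obstacle is the potential non-smoothness of $T^k$ on $W^u(y)$: if $W^u(y)$ meets the singular set $\cS_k$, then $T^kW^u(y)$ splits into several smooth pieces and the endpoint analysis has to be applied to the piece containing $T^k y$. I plan to handle this using Lemma~\ref{gap2}, whose time-reversed analog implies that any piece of $T^k\cU^s_{n,j}$ that lands in $\mathrm{int}(\cU^*)$ as a solid rectangle cut off by singularity curves must lie entirely in an $s$-gap (of type II) of $\cU^*$, hence in the complement of $\Gamma^s\supset\cR^*$ modulo $\mu$. Together with the perfect-property argument for the surviving smooth pieces, this rules out any positive-measure return of $T^k\cR^s_{n,j}$ to $\cR^*$ for $1\le k\le n-1$.
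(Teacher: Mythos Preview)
Your argument assembles the right ingredients (the perfect property, the coupling decomposition, Lemma~\ref{gap2}) but misapplies them in two places.

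First, a minor point: your claim that the component $C$ must exit through $\partial^s\cU^*$ because ``an unstable curve cannot cross the parallel unstable boundary'' is unjustified---two unstable curves may intersect. The correct reason $C$ cannot terminate on $\partial^u\cU^*$ is again the perfect property: a crossing point $p\in T^kW^u(y)\cap\partial^u\cU^*$ would give $T^{-k}p\in W^u(y)\subset\mathrm{int}(\cU^*)$, contradicting that backward iterates of $\partial^u\cU^*$ avoid the interior. This is exactly the paper's Figure~1 case.

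The substantive gap is your handling of the ``obstacle''. Your endpoint argument needs the relevant component at step $k$ of the coupling algorithm to have its endpoints on $\partial^s\cU^*$. But if $\cU^s_{n,j}\subset\cU^s_{m,i}$ for some $m<k<n$, then the component through $T^my$ already fully crossed at step $m$; since $y\in\cR^s_{n,j}$ was not coupled then, $T^my$ lies in a gap of $\Gamma^s$, and from step $m$ onward the algorithm iterates that \emph{gap}, not the full leaf $W^u(y)\cap\cU^*$. The endpoints of a gap are not on $\partial^s\cU^*$, so the perfect property no longer helps. This is precisely where Lemma~\ref{gap2} is needed---but in the form the paper uses it: $s$-gaps return only to $s$-gaps, hence $T^ky\notin\Gamma^s$ and so $T^ky\notin\cR^*$. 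Your stated use of Lemma~\ref{gap2} (``pieces of $T^k\cU^s_{n,j}$ cut off by singularity curves lie in an $s$-gap'') is incoherent: $T^n$ is smooth on $\cU^s_{n,j}$, hence so is $T^k$ for $k\le n$, and no such cutting occurs there. The singularities you worry about live on $W^u(y)\setminus\cU^s_{n,j}$, and Lemma~\ref{gap2} says nothing about those pieces.

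The paper sidesteps all of this by working throughout with the solid rectangle $\cU^s_{n,j}$ (on which $T^k$ is smooth), doing a trichotomy on whether $T^k\cU^s_{n,j}$ meets $\partial^u\cU^*$, meets $\partial^s\cU^*$, or lies entirely in the interior, and then splitting the interior case according to whether or not $\cU^s_{n,j}$ is nested in some $\cU^s_{m,i}$ with $m<n$---the nested case being handled by Lemma~\ref{gap2} and the non-nested case by an induction on $n$.
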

\begin{proof}
 The decomposition of $\cR^{*}$ was given by (\ref{firstdecom}). It is enough to show that the first return of $\cR^s_{n,j}$ is indeed already a proper return, for any $n\geq 1$, $j=1,\cdots, N_n$. We prove this fact by induction on $n$.

 For $n=1$, the statement is clearly true.

 Assume the statement holds for $\cR_{m,j}^s$, $m=1, \dots, n-1$, $j\geq 1$,
 then we will consider $\cR_{n,j}^s$.
 We prove this fact by way of contradiction. Suppose there exists $1<k<n$, such that $T^k \cR^s_{n,j}$ intersects $\cR^*$ nontrivially. This also implies that  $T^k \cU^s_{n,j}$ intersects $\cR^*$ nontrivially.  Suppose
  $T^k\cU^s_{n,j}$ intersects the unstable boundary of $\cR^*$, say $W^u_1$, see Figure \ref{fig1}. Note that now the backward image of $W^u_1$ returns to the interior of $\cU^*$ under $T^{-k}$, which contradict to the fact that $\cU^*$ is perfect.

 \begin{figure}[h]
\centering \psfrag{T1}{\scriptsize$T^k$} \psfrag{T2}{\scriptsize$T^{n-k}$}
 \psfrag{TkW}{\scriptsize$T^{-k}W^u_1$}
 \psfrag{u}{\scriptsize$\cU^s_{n,j}$}
\includegraphics[width=5in]{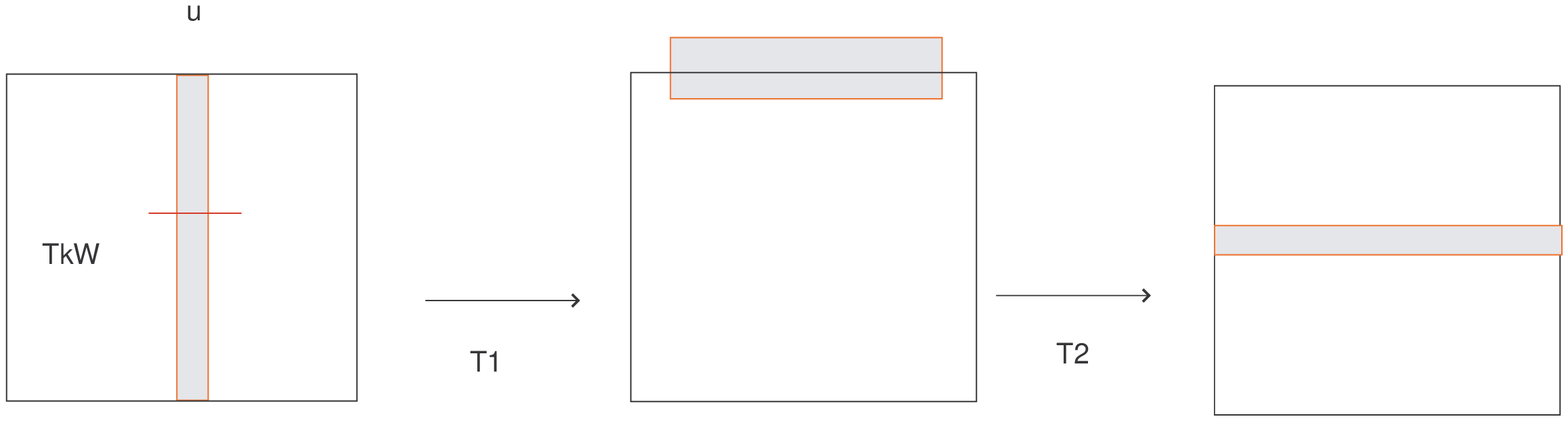}
\renewcommand{\figurename}{Fig.}
\caption{\small\small\small\small\small{ $T^k \cU^s_{n,j}$ intersects $W^u_1$.\label{fig1}}}
\end{figure}

Similarly, suppose
$T^k\cU^s_{n,j}$ intersects the stable boundary of $\cR^*$, say $W^s_1$, see Figure \ref{fig2}. Note that now the forward image of $W^s_1$ returns to the interior of $\cU^*$ under $T^{n-k}$, which contradict to the fact that $\cU^*$ is perfect.

 \begin{figure}[h]
\centering \psfrag{T1}{\scriptsize$T^k$} \psfrag{T2}{\scriptsize$T^{n-k}$}
 \psfrag{TkW}{\scriptsize$T^{n-k}W^s_1$}
 \psfrag{u}{\scriptsize$\cU^s_{n,j}$}
\includegraphics[width=5in]{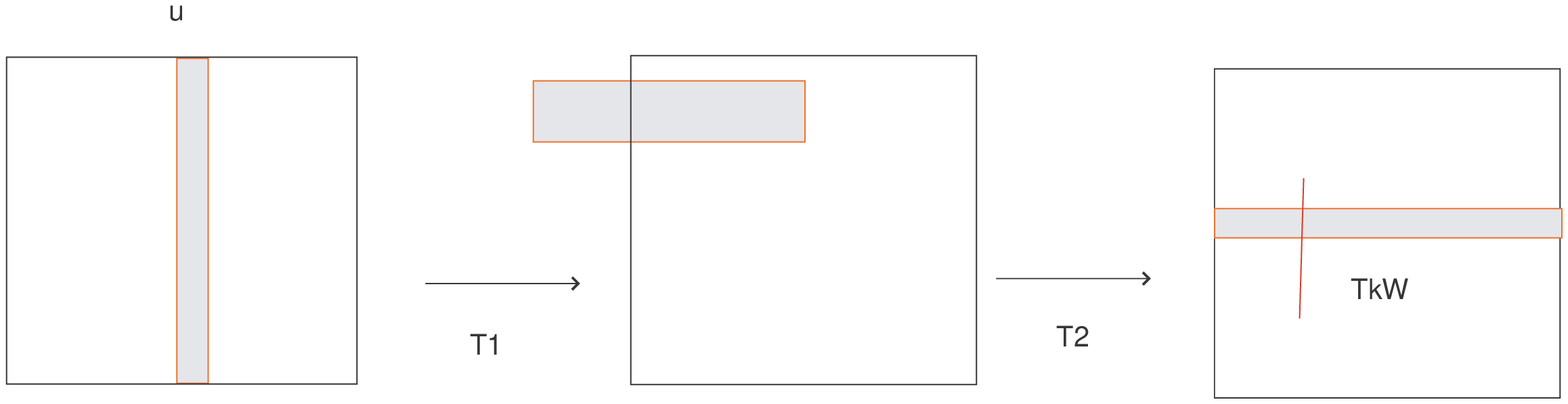}
\renewcommand{\figurename}{Fig.}
\caption{\small\small\small\small\small{ $T^k \cU^s_{n,j}$ intersects $W^s_1$.\label{fig2}}}
\end{figure}

Finally, we suppose
$T^k\cU^s_{n,j}$ lies in the interior of $\cU^*$, see Figure \ref{fig3}.
 Then there are  two cases to consider.\\

\noindent \textbf{Case I.} We assume ${\cU}_{n,j}$ does not belong to any other $s$-rectangle $\cU_{m,i}$, with $m<n$, $i\geq 1$.

 Using the aligned property of singular curves,  we know that $T^k\cU^s_{n,j}$ can be extended to a $u$-rectangle in $\cU^*$ that properly cross $\cU^*$. We denote the preimage of this extended $u$-solid rectangle under $T^{-k}$ by $\hat{\cU}_{n,j}$. We  claim that $\hat{\cU}_{n,j}$ is a $s$-subrectangle that does not intersect with any $\cR_{m,i}^s$, for $m=1,\cdots, n-1$.
Indeed if $\hat{\cU}_{n,j}$ does not intersect any other s-subset of index $(m,i)$, such that $m< n$, then we know that $T^k \hat{\cU}_{n,j}$ properly returns to $\cU^*$ for the first time, thus  $k=n$, which is a contradiction.

 \begin{figure}[h]
\centering \psfrag{T1}{\scriptsize$T^k$} \psfrag{T2}{\scriptsize$T^{n-k}$}
 \psfrag{TkW}{\scriptsize$T^{n-k}W^s_1$}
 \psfrag{u}{\scriptsize$\cU^s_{n,j}$}
\includegraphics[width=5in]{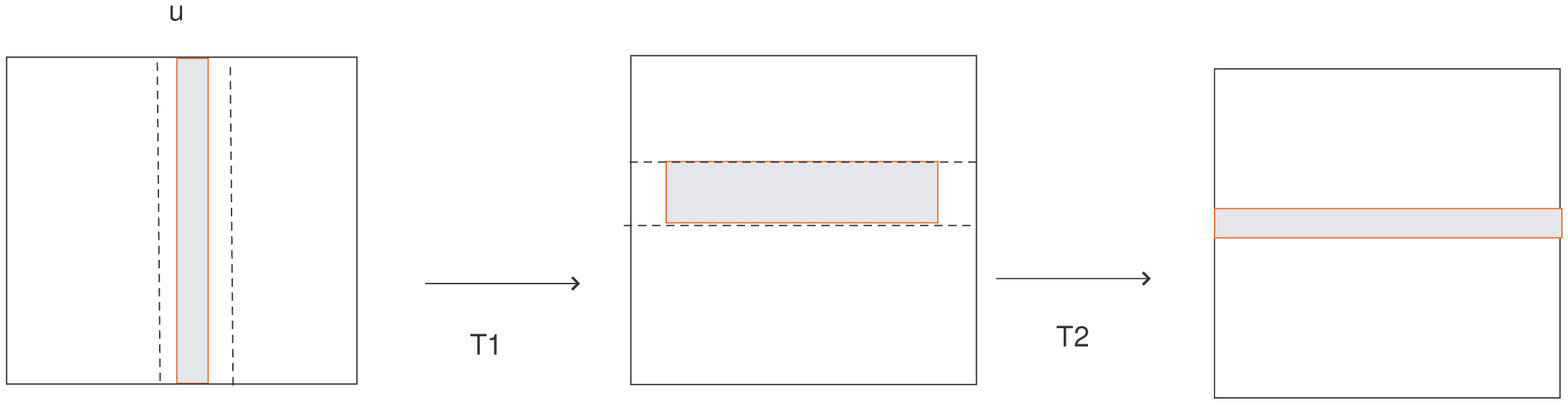}
\renewcommand{\figurename}{Fig.}
\caption{\small\small\small\small\small{ $T^k \cU^s_{n,j}$ lies in the interior of $\cU^*$.\label{fig3}}}
\end{figure}

Suppose that $\hat{\cU}_{n,j}$ does  intersect a $s$-rectangle of index $(m,i)$, with smaller index $m<n$, then one $s$-boundary of $\cU^*$ that intersects with $T^m \cU_{m,i}^s$ will return to the interior of $\cU^*$ after another $n-m$ iterations, which contradicts the definition of the perfect set $\cU^*$. This verifies the claim.\\

\noindent \textbf{Case II.} We assume ${\cU}^s_{n,j}$ entirely belongs to a $s$-rectangle $\cU_{m,i}^s$, with $m<n$, for some $i\geq 1$.  This implies that $T^m \cU_{n,j}^s$ lies in a s-gap $G$ of $\cU^*$, according to the construction of $\cU^s_{n,j}$ in Proposition \ref{couplingdecompose}.
We further decompose  $T^k\cU^s_{n,j}=A\cup B$ into two sets, such that $A$ belongs to the $Gap_s$, the other set $B$ does not. Then $B$ is a disjoint union of rectangles in the complement of the s-gaps. This implies that each rectangle can be extended in the stable direction to a s-rectangle in $\cU^*$. More precisely, $B$ can be extended to a s-rectangle  $\cU^*_{n-k,l}$, for some $l\geq 1$. This implies that $T^{n-k} B \subset T^{n-k}\cU^*_{n-k,l}$. By time reversibility, this implies that $B$ lies inside the u-gaps of $\cU^*$. Thus we have shown that $A$ lies in the s-gaps, and B lies in the u-gaps. Thus
$T^k\cU^s_{n,j}\cap \cR^*$ is empty, which is a contradiction.

 \begin{figure}[h]
\centering \psfrag{T1}{\scriptsize$T^m$} \psfrag{T2}{\scriptsize$T^{n-k}$}
\psfrag{T3}{\scriptsize$T^{k-m}$}
 \psfrag{G}{\scriptsize$G$}
  \psfrag{V}{\scriptsize$\cU^s_{n-k,l}$}
   \psfrag{TW}{\scriptsize$T^{k-m}W$}
 \psfrag{u}{\scriptsize$\cU^s_{n,j}$}
 \psfrag{U}{\scriptsize$\tilde{U}$}
\includegraphics[width=6in]{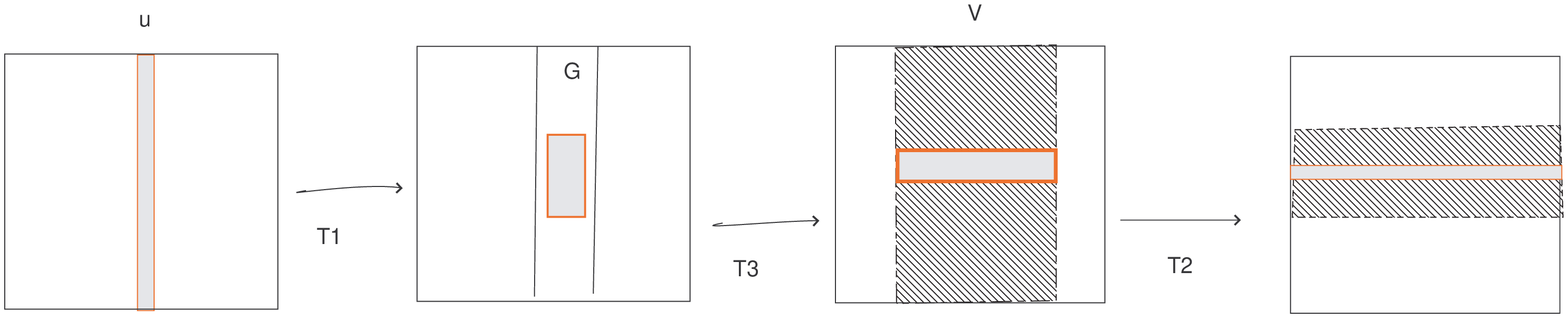}
\renewcommand{\figurename}{Fig.}
\caption{\small\small\small\small\small{ Case II. $T^k\cU_{n,j}$ enters a u-gap  of $\cU^*$.}\label{fig4}}
\end{figure}

Based on the discussion in the above, we know that $\mu(T^{k}\cU^s_{n,j}\cap\cR^*)=0$ for all $k=1,\cdots,n-1$. i.e. $T^{n}\cU^s_{n,j}$ returns to $\cR^*$ for the first time.

We have thus proved that
$\cR^*=\cup_{n=1}^{\infty}\cR^s_n$, such that $\{\cR^s_n\}$ are disjoint $s$-subsets, with the property that $T^n\cR^s_n$ properly return to $\cR^*$ for the first time.
Thus we define $\cR_n^*=\cR^s_n$ almost surely for every $n\in\mathbb{N}$. This also implies that $\{\cR_n^*\}$ is a Markov partition of $\cR^*$.
\end{proof}

We have done with the construction of $\cR^*$ and the proof of Theorem \ref{main}.

{
\section{Advanced Statistical Properties due to Theorem~\ref{main}}\label{sec:stat}

Note that the partition \eqref{def Markov Partition} in Theorem~\ref{main}
could be regarded as a Young tower, whose base is the hyperbolic product set $\cR^*$.
As the tower heights are exactly the first return time to the base,
such Young tower is in fact isomorphic to the dynamical system $(M, T, \mu)$.
Furthermore, the first return times has exponential tail bounds (see \eqref{cRmbd}).

Under this structure, several advanced statistical properties have been established.

\subsection{Large/Moderate Deviation Principles}

We investigate the deviation of the Birkhoff average taking values away from its mean,
by investigating the decay rate of
\beqn
\mu\left\{x\in M: \ \left|\frac{1}{n} S_n f(x)- \mu(f) \right|>\varepsilon\right\}.
\eeqn
for a dynamically H\"older function $f\in H(\gamma)$.
Here $S_n f=\sum_{k=0}^{n-1} f\circ T^k$ and $\mu(f)=\int f d\mu$.
Also, we recall the Green-Kubo formula for variance,
that is, $\sigma^2=\sum\limits_{k=-\infty}^\infty \int f\cdot\, f\circ T^k\, d\mu$.

Under the exponential tail condition given in \eqref{cRmbd},
Rey-Bellet and Young showed in \cite{RBY08}  that the
logarithmic moment generating function
\beqn
\Psi(z)=\lim_{n\to \infty} \frac{1}{n} \log \int e^{zS_n f} d\mu
\eeqn
is analytic in the strip
$\{z\in \IC: \ |\Re(z)|\le \xi_{max}, \ |\Im(z)|\le \eta_{max}\}$
for some $\xi_{max}>0$ and $\eta_{max}>0$,
and then deduce the exponential large deviation principle.

\begin{theorem}[Exponential large deviations \cite{RBY08}]\label{thm: exp LD}
Let $f\in H(\gamma)$ and
let the rate function $I(t)$ be the Legendre transform of $\Psi(z)$. Then for any interval
$[t_1, t_2]\subset [\Psi'(-\xi_{max}), \Psi'(\xi_{max})]$,
\beqn
\lim_{n\to \infty} \frac{1}{n} \log \mu\left\{x\in \cM: \ \frac{1}{n} S_n f(x)\in [t_1, t_2]\right\}
=-\inf_{t\in [t_1, t_2]} I(t).
\eeqn
\end{theorem}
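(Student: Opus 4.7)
The plan is to follow the Gärtner-Ellis strategy as in Rey-Bellet and Young~\cite{RBY08}, transplanting the analysis onto the Markov tower constructed in Theorem~\ref{main}. The starting point is that Theorem~\ref{main} realizes $(M,T,\mu)$ as a Young tower whose base $\cR^*$ has the exponential return-time tail \eqref{cRmbd}. After lifting, proving the LDP reduces to a spectral analysis of a suitable twisted transfer operator and an application of the abstract Gärtner-Ellis theorem.

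First I would introduce the twisted transfer operator $\cL_z g := \cL(e^{z f} g)$, acting on an appropriate Banach space of (signed) densities on the tower -- for instance, the space of dynamically Hölder densities used in \cite{Y98, CZ09}, for which the unperturbed operator $\cL_0$ has a spectral gap thanks to the exponential decay of correlations (Theorem~\ref{thm: exp M}). The key analytic step is to establish a Lasota--Yorke type inequality for $\cL_z$ that is uniform in $z$ in a complex strip $|\Re(z)|\le \xi_{max}$, $|\Im(z)|\le \eta_{max}$. The regularity of $f\in H(\gamma)$ (both stable and unstable variations) is exactly what is needed to keep the perturbed operator acting on the same Banach space, and the exponential tail bound \eqref{cRmbd} bounds the contribution of high levels of the tower in the twisted norm.

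Second I would apply Kato's analytic perturbation theory: since $z\mapsto \cL_z$ is analytic in $z$ and $\cL_0$ has a simple leading eigenvalue $1$ isolated from the rest of the spectrum, there exists an analytic family of leading eigenvalues $z\mapsto \lambda(z)$ on the strip, with a corresponding analytic family of eigenprojections. Standard arguments then give
\beq
\int_M e^{z S_n f}\, d\mu \;=\; c(z)\,\lambda(z)^n \;+\; O(\theta^n),
\eeq
for some $\theta<|\lambda(z)|$ and some analytic, non-vanishing function $c(z)$. This yields the limit defining $\Psi(z)=\log\lambda(z)$ and its analyticity on the same strip, along with $\Psi(0)=0$, $\Psi'(0)=\mu(f)$, and $\Psi''(0)=\sigma^2$.

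Finally I would invoke the abstract Gärtner-Ellis theorem: analyticity (hence strict convexity away from degenerate directions) of $\Psi$ on $[-\xi_{max},\xi_{max}]$ implies the LDP with rate function $I(t)$ equal to the Legendre transform of $\Psi$, valid for Birkhoff averages taking values in the image interval $[\Psi'(-\xi_{max}),\Psi'(\xi_{max})]$. The degenerate case $\sigma^2=0$ corresponds to a coboundary (as in Theorem~\ref{thm: CLT M}), and the LDP becomes trivial there. The main obstacle I expect is the uniform Lasota-Yorke inequality for $\cL_z$ on the tower: because the tower heights (first return times to $\cR^*$) are unbounded and $f$ is only piecewise Hölder across the singular set $\cS_{\pm\infty}$, one has to control the $L^\infty$-growth of $e^{z S_{\tau} f}$ level-by-level using the exponential tail \eqref{cRmbd}, and then combine this with the distortion estimates from (\textbf{H4})(2) to get a genuine quasi-compactness statement in a complex neighborhood rather than just on the real axis.
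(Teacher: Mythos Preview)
Your proposal is correct and follows essentially the same approach as the paper. The paper does not give an independent proof of this theorem: it simply observes that the Markov tower built in Theorem~\ref{main} has exponential return-time tail \eqref{cRmbd}, and then invokes Rey-Bellet and Young~\cite{RBY08} directly, whose argument is precisely the twisted-transfer-operator/analytic-perturbation/G\"artner--Ellis scheme you sketched.
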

Note that this theorem is a ``local" large deviation result, as it only holds on
an interval containing the the mean $\mu(f)$.
Rey-Bellet and Young \cite{RBY08} further characterized the fluctuations of
$S_nf$ which are of an order intermediate
between $\sqrt n$ and $n$.

\begin{theorem}[Moderate deviations \cite{RBY08}]
Let $f\in H(\gamma)$ and
let $B_n$ be an increasing sequence of positive real numbers such that
$\lim_{n\to\infty} a_n/\sqrt n=\infty$ and $\lim_{n\to\infty} a_n/ n=0$. Then for any
interval $[t_1, t_2]\subset \IR$ we have
\beqn
\lim_{n\to \infty} \frac{1}{B_n^2/n} \log \mu\left\{x\in \cM: \ \frac{S_n f(x)-n\mu(f)}{B_n} \in [t_1, t_2]\right\}
=-\inf_{t\in [t_1, t_2]} \frac{t^2}{2\sigma^2}.
\eeqn
\end{theorem}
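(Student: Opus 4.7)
The plan is to deduce the moderate deviation principle from the analyticity of the logarithmic moment generating function $\Psi(z)$ already established in the preceding theorem (coming from the exponential tail bound \eqref{cRmbd} via the Young tower structure), by applying a Gärtner--Ellis-type argument on the rescaled cumulant generating functions.

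First, I would recall from the proof of Theorem~\ref{thm: exp LD} that $\Psi$ extends analytically to a complex neighborhood of $0$, with $\Psi(0)=0$, $\Psi'(0)=\mu(f)$, and $\Psi''(0)=\sigma^2$; this Taylor expansion is the crucial input, since the Green--Kubo variance $\sigma^2$ reappears as the curvature of $\Psi$ at the origin. Next, for each sequence $B_n$ satisfying $\sqrt{n}\ll B_n\ll n$, I would consider the rescaled cumulant generating functions
\beq\label{eq:rescaledcgf}
\Lambda_n(z) := \frac{n}{B_n^2}\,\log \int_M \exp\!\left(z\,\frac{B_n}{n}\,\big(S_n f-n\mu(f)\big)\right)\,d\mu .
\eeq
Using the uniform estimates that go into the proof of the analyticity of $\Psi$ (in particular, uniform control on the transfer operator associated with the induced Gibbs--Markov map on $\cR^*$, together with a perturbation argument in the parameter $z$), one shows
\beqn
\Lambda_n(z) \;\xrightarrow[n\to\infty]{}\; \tfrac{1}{2}\sigma^2 z^2
\eeqn
for each $z$ in a fixed complex neighborhood of $0$. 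The heuristic is that $z\,B_n/n\to 0$, so the relevant values of the parameter in $\Psi$ stay in the analyticity region, and the $B_n^2/n$ normalization picks out exactly the quadratic term of the Taylor expansion while the higher order terms become negligible because $(B_n/n)^k \cdot n / B_n^2 = B_n^{k-2}/n^{k-1}\to 0$ for $k\ge 3$.

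Once the pointwise limit of $\Lambda_n$ is identified as the quadratic function $\tfrac{1}{2}\sigma^2 z^2$ on an open neighborhood of $0$, I would invoke the Gärtner--Ellis theorem in its local form: the Legendre transform of $z\mapsto \tfrac{1}{2}\sigma^2 z^2$ is $t\mapsto t^2/(2\sigma^2)$, and since this rate function is convex and finite everywhere, both the upper and lower bounds of the large deviation principle hold at the scale $B_n^2/n$, yielding
\beqn
\lim_{n\to\infty} \frac{1}{B_n^2/n}\log\mu\!\left\{\tfrac{S_nf-n\mu(f)}{B_n}\in [t_1,t_2]\right\} = -\inf_{t\in[t_1,t_2]} \frac{t^2}{2\sigma^2}.
\eeqn

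The main obstacle is step two, namely obtaining the limit of the rescaled cumulant generating functions \eqref{eq:rescaledcgf} with uniform control in $z$ over a complex neighborhood of $0$. This requires more than the mere existence of $\Psi$: one needs quantitative estimates, such as a spectral gap for the perturbed transfer operator of the induced map on $\cR^*$ that is locally uniform in $z$, so that one may differentiate $\Psi$ twice at $0$ and control the remainder. Fortunately, the exponential tail bound \eqref{cRmbd} together with the Markov structure of $\cP$ from Theorem~\ref{main} gives precisely the quasi-compactness of the transfer operator required to run this perturbation argument, so the work of Rey-Bellet--Young~\cite{RBY08} applies directly in our setting.
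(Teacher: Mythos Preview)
Your proposal is correct and matches the approach of \cite{RBY08}, which is exactly what the paper invokes: the paper does not give its own proof of this theorem but simply cites Rey-Bellet--Young, noting that the exponential tail bound \eqref{cRmbd} from Theorem~\ref{main} places the system within the scope of their results. Your sketch of the G\"artner--Ellis argument via the Taylor expansion of $\Psi$ at $0$ is precisely the mechanism used in \cite{RBY08}.
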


\subsection{Beyond the Classical Limit Theorems}

Reinforcements of the classical limit theorem, stated in Theorem~\ref{thm: CLT M},
are developed in several different directions.

\subsubsection{Almost Sure Limit Theorems}

Three methods - spectral methods, martingale methods, and induction arguments,
were applied by Chazottes and Gou\"ezel \cite{ChG07} to prove that
"whenever a limit theorem in the classical sense holds for a dynamical system,
a suitable almost-sure version also holds", under the assumption that
the tail bound of the first return times is of order $\cO(n^{-a})$ for some $a>2$.
It is apparent that our system $(M, T, \mu)$ (regarded as a Young tower) satisfies this assumption,
as it has exponential tail bound.
Combining with Theorem~\ref{thm: CLT M}, we obtain

\begin{proposition}[Almost sure central limit theorem]\label{ASCLT}
Let $f\in H(\gamma)$ such that $\mu(f)=0$.
Then for $\mu$-almost surely $x\in M$,
\beqn
\frac{1}{\log n} \sum_{k=1}^{n} \frac{1}{k} \delta_{S_k f(x)/\sqrt k}
\Longrightarrow \sigma N(0, 1), \ \ \text{as}\ n\to\infty.
\eeqn
Here ``$\Longrightarrow$" standards for the convergence in distribution.
\end{proposition}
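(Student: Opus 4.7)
The plan is to deduce Proposition~\ref{ASCLT} from the general transfer principle of Chazottes--Gou\"ezel \cite{ChG07} in essentially the same way Theorem~\ref{thm: CLT M} was combined with the tower structure. Concretely, the Markov partition $\cP$ from Theorem~\ref{main}, together with the exponential tail bound \eqref{cRmbd}, realizes $(M, T, \mu)$ as a Young tower with base $\cR^*$ whose first-return times $\tau$ satisfy $\mu(\tau>n)\le C_*\theta_*^n$. This is vastly stronger than the hypothesis of polynomial tail $\cO(n^{-a})$ with $a>2$ required in \cite{ChG07}; thus their abstract transfer theorem, which upgrades a classical limit theorem to its almost-sure counterpart, is applicable provided the observable $f$ fits their functional framework.

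The verification of this framework is the content of the main step. First, I would lift $f\in H(\gamma)$ to the quotient tower $\bar\Delta=\Delta/\sim$, where $\sim$ identifies points on the same stable manifold in the base, so that the lifted observable $\bar f$ is a bounded H\"older function on the quotient (up to an exponentially small coboundary error controlled by the stable contraction and the dynamical H\"older seminorm $\|f\|^-_\gamma$ from \eqref{sDHC-}). The coboundary error is summable along orbits and thus does not affect the almost sure distributional limit of $S_k f(x)/\sqrt k$. Second, I would invoke Theorem~\ref{thm: CLT M}, which gives the classical CLT $S_n f/\sqrt n\Rightarrow N(0,\sigma^2)$ with $\sigma^2=\sum_{k\in\IZ}\int f\cdot f\circ T^k\,d\mu$. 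Third, by directly applying the conclusion of Chazottes--Gou\"ezel for Young towers with exponential tails, the empirical logarithmic distribution
\[
\frac{1}{\log n}\sum_{k=1}^n \frac{1}{k}\,\delta_{S_k f(x)/\sqrt k}
\]
converges weakly for $\mu$-a.e.~$x$ to the same Gaussian limit $\sigma N(0,1)$ that appears in the classical CLT.

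The main technical obstacle is the verification that our observable class $H(\gamma)=H^+(\gamma)\cap H^-(\gamma)$ fits the regularity hypothesis of \cite{ChG07}: their framework is phrased for functions whose correlations decay sufficiently fast and which, after reduction to the tower quotient, have controlled oscillations along unstable fibers. The dynamical H\"older condition \eqref{DHC-}--\eqref{DHC+} combined with uniform hyperbolicity ensures that both $\Osc_{k}(f)$ along stable/unstable holonomies and the quotient tower version $\bar f$ are H\"older with exponent comparable to $\gamma\log\Lambda/|\log\theta_*|$, which is exactly the regularity required. The degenerate case $\sigma=0$ is treated separately: by Theorem~\ref{thm: CLT M} it corresponds to a coboundary $f=g-g\circ T$ with $g\in L^2_\mu$, and then $S_k f(x)/\sqrt k\to 0$ in probability, so the empirical distribution converges to $\delta_0$, consistent with the stated limit.
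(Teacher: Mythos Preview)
Your proposal is correct and follows essentially the same approach as the paper: both arguments invoke the Chazottes--Gou\"ezel transfer principle \cite{ChG07}, noting that the exponential tail bound \eqref{cRmbd} from Theorem~\ref{main} is far stronger than the required $\cO(n^{-a})$, $a>2$, and then combine this with the classical CLT from Theorem~\ref{thm: CLT M}. You supply more technical detail on the quotient-tower lifting and the regularity verification for $H(\gamma)$ than the paper does (which simply states the result as an immediate consequence), but the underlying strategy is identical.
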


\subsubsection{Almost Sure Invariance Principle}

The almost sure invariance principle is a functional version of
the central limit theorem, which ensures that the trajectories of a random process can be matched with the
trajectories of a Brownian motion with almost surely negligible error.
More precisely, let $\{X_n\}_{n\ge 0}$ be a real-valued process, which have
zero means and finite $(2+\epsilon)$-moments for some $\epsilon>0$.
Set $S_t=\sum_{n\le t} X_n$ for $t>0$.
We say that the process $\{X_n\}_{n\ge 0}$ satisfies an almost sure invariance principle (ASIP)
if there is $\lambda>0$ such that
\beq\label{ASIP}
\left|S_t - W_t \right| =\cO (t^{\frac12-\lambda}), \ \ \ \text{almost surely},
\eeq
where $W$ is a Brownian motion with variance $\lim_{n\to\infty} \text{Var}(S_n/\sqrt n)$.

Using a general theorem established by Philipp and Stout \cite{PhSt75}, we obtain

\begin{theorem}[Almost sure invariance principle]
Let $f\in H(\gamma)$ such that $\mu(f)=0$.
Then the process $\{f\circ T^n\}_{n\ge 0}$ satisfies the almost sure invariance principle.
\end{theorem}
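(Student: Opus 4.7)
The plan is to verify the hypotheses of the Philipp--Stout general ASIP \cite{PhSt75}, using the Markov partition from Theorem \ref{main} as a vehicle. First, I would pass to the first-return map $\hF\colon \cR^*\to \cR^*$ and consider the induced observable $F(x) := \sum_{k=0}^{\tau(x)-1} f(T^k x)$, where $\tau$ is the first-return time to $\cR^*$. Since $f\in H(\gamma)\subset L^\infty$, we have $|F(x)|\le \|f\|_\infty\,\tau(x)$, so the exponential tail bound \eqref{cRmbd} immediately gives $F$ finite exponential moments of every order. By the symbolic coding developed in Section \ref{sec symbolic coding}, the induced system is conjugate to a countable two-sided Bernoulli shift, so the standard H\"older estimates on symbolic cylinders become available.

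Second, I would establish a strong-mixing rate for the stationary sequence $\{F\circ\hF^n\}$. Combining the dynamically H\"older regularity of $f$ along stable manifolds (from $f\in\cH^-(\gamma)$) with the uniform contraction in \textbf{(H1)} and the distortion bound in \textbf{(H4)}, the oscillation of $F$ across any symbolic $n$-cylinder contracts at a geometric rate $\vartheta^n$ for some $\vartheta\in(0,1)$. Together with the spectral gap inherited from the exponential decay of correlations (Theorem \ref{thm: exp M}), this yields exponential $\phi$-mixing for $\{F\circ\hF^n\}$. The finite exponential moments of $F$, combined with this $\phi$-mixing rate, match the Philipp--Stout hypotheses and produce an ASIP for the induced partial sums $\widetilde S_n := \sum_{k=0}^{n-1} F\circ\hF^k$.

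Third, I would transfer the ASIP from the induced scale back to the original time scale. Since $S_{\tau_n(x)} f(x) = \widetilde S_n(x)$ with $\tau_n := \sum_{k=0}^{n-1} \tau\circ\hF^k$, and since $\{\tau\circ\hF^k\}$ itself obeys an ASIP by the same Philipp--Stout input (exponential moments plus $\phi$-mixing), a Melbourne--T\"or\"ok style time-change argument converts the ASIP for $\{\widetilde S_n\}$ into one for $\{S_n f\}$: the limiting Brownian motion is $W_{t/\bar\tau}$, where $\bar\tau = 1/\mu(\cR^*)$ is the Kac mean return time and $W$ is the Brownian motion associated with $F$. The error produced by replacing $\tau_n$ by its mean $n\bar\tau$ is $O(n^{1/2-\lambda'})$ almost surely, which is absorbed into the error rate in \eqref{ASIP}.

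The hard part is controlling the regularity of $F$: the summands $f\circ T^k$ for $0\le k<\tau(x)$ may approach the singular set $\cS_{\pm 1}$, so a naive H\"older estimate for $F$ deteriorates as $\tau(x)\to\infty$. The key observation is that within each atom $T^k\cR^*_n$ of the Markov partition in \eqref{def Markov Partition}, the map $T^k$ is smooth with uniformly bounded distortion, so the oscillation of $F$ on a symbolic $n$-cylinder is dominated by the hyperbolic contraction of $\hF$ rather than by the size of $\tau$. This is exactly where the genuine Markov property (as opposed to a bare Young tower) pays off, and it is what upgrades Theorem \ref{thm: CLT M} to the functional strengthening claimed here.
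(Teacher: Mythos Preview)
Your strategy is sound but more laborious than the paper's, which gives no detailed argument at all: it simply observes that Theorem~\ref{main} realizes $(M,T,\mu)$ as a Young tower with exponential return-time tails and then invokes the black-box ASIP results already proved for such towers in \cite{C06, MN09, G10} (all of which ultimately rest on \cite{PhSt75} or martingale/spectral refinements thereof). In other words, the paper outsources precisely the three steps you outline, so your proposal is effectively a sketch of what those references do.

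Two remarks on your sketch. First, in step~2 you derive $\phi$-mixing for the induced process from the spectral gap ``inherited'' from Theorem~\ref{thm: exp M}; this reverses the usual logical order, since decay of correlations for $(M,T,\mu)$ is itself deduced from the tower/induced structure. The mixing you need for $\{F\circ\hF^n\}$ comes directly from the full-shift coding and the Gibbs property of the induced SRB measure (cf.\ the local H\"older estimate in Lemma~\ref{lem: check P1-2}, which goes through verbatim for any $f\in H(\gamma)$), not from Theorem~\ref{thm: exp M}. Second, your ``hard part'' is already handled in the tower literature: the oscillation of $F$ on an $n$-cylinder is bounded by $C\sum_{k\ge n}\Lambda^{-\gamma k}$ via the same telescoping used in Lemma~\ref{lem: check P1-2}, so no special pleading about the Markov property versus a bare Young tower is needed here---the distinction matters for the first-return property in Theorem~\ref{main}, but the ASIP only uses the tower structure and the exponential tail.
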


The ASIP was first shown by Chernov \cite{C06} in the exponential case, and the vector-valued case
was later proved by Melbourne and Nicol \cite{MN09} using the martingale methods
and then by Gou\"ezel \cite{G10} with the purely spectral methods.
Under our assumptions (\textbf{H1})-(\textbf{H5}), a non-stationary version of ASIP was established by
the first and third authors in \cite{CYZ18}.

Note that the ASIP is the strongest form - it implies WIP, which in turn implies the CLT and the almost sure CLT.
The ASIP also implies many other limit laws (see \cite{PhSt75}).

\subsubsection{The Local Central Limit Theorem}

We say that a function $f: M\to \IR$ is periodic if there exist $c\in \IR$, $\lambda>0$,
$g: M\to \IZ$ and $h: M\to \IR$ measurable, such that $f=c+\lambda g+h-h\circ T$ almost surely.
Otherwise $f$ is said to be aperiodic, which implies in particular that $\sigma^2>0$.

The local central limit theorem (LCLT) concerns the convergence of the densities of
distributions $S_n f/\sqrt n$ to the Gaussian density.
The LCLT follows from the results by Gou\"ezel in \cite{G05}.

\begin{theorem}[Local Central Limit Theorem \cite{G05}]
Let $f\in H(\gamma)$ such that $\mu(f)=0$ and $f$ is aperiodic.
Then for any bounded interval $J\subset \IR$, for any real sequence $\kappa_n$
with $\kappa_n/\sqrt n\to \kappa\in \IR$, for any $u\in H(\gamma)$ and
for any $v: M\to \IR$ measurable, we have
\beqn
\sqrt n \mu\{x\in M:\ S_n f(x)\in J+\kappa_n+u(x)+v(T^nx)\}\to
|J|\varphi_\sigma(\kappa),
\eeqn
as $n\to \infty$, where $\varphi_\sigma(\kappa):=\dfrac{e^{-\kappa^2/(2\sigma^2)}}{\sigma\sqrt{2\pi}}$
is the density of the Gaussian distribution $\sigma N(0, 1)$.
\end{theorem}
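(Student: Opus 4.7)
Theorem~\ref{main} realizes $(M,T,\mu)$ as a Young tower with base $\cR^*$ and first-return time whose tail decays exponentially by \eqref{cRmbd}. Consequently the LCLT stated above is a direct application of Gou\"ezel's abstract LCLT for Gibbs--Markov--Young towers with exponentially integrable return times (\cite{G05}, Theorem~5.1). My task is therefore to verify the two spectral hypotheses of that theorem for our tower and then to encode the nuisance terms $\kappa_n$, $u(x)$, $v(T^n x)$ as multipliers in the Fourier-inversion formula.

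\textbf{Spectral gap at the origin.} I would pass to the quotient tower $\bar\Delta$ obtained by collapsing stable manifolds, which, via Lemma~\ref{lem intersection} and the symbolic coding of Section~\ref{sec symbolic coding}, is isomorphic to a one-sided countable Markov shift on the alphabet $\fA$. On a Banach space $\cB$ of symbolically Lipschitz functions the transfer operator $\bar L$ is quasi-compact with simple leading eigenvalue $1$; the two ingredients that make this work are the exponential tail bound \eqref{cRmbd} and the absolute continuity/distortion estimates of Assumption~\textbf{(H4)}. Analytic Kato perturbation then produces, for small $t$, a simple leading eigenvalue $\lambda(t)$ of the Fourier-perturbed operator $\bar L_t\varphi:=\bar L(e^{it\bar f}\varphi)$ with
\beq
\lambda(t)=1-\tfrac12\sigma^2 t^2+o(t^2),
\eeq
where $\sigma^2$ is the Green--Kubo variance appearing in Theorem~\ref{thm: CLT M} and $\bar f$ is the lift of $f$ to $\bar\Delta$.

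\textbf{Control for large $|t|$ and aperiodicity.} The main obstacle is the uniform spectral-radius bound
\beq
\sup_{t\in K}\|\bar L_t^n\|_{\cB}\leq C\rho^n,\qquad \rho=\rho(K)<1,
\eeq
on every compact $K\subset\IR\setminus\{0\}$. By a standard Livschitz argument on the countable Markov shift $\fA^{\IZ}$, the absence of peripheral spectrum for $\bar L_t$ with $t\neq 0$ is equivalent to the aperiodicity hypothesis on $\tilde f$, so this rules out eigenvalues on the unit circle. The quantitative bound for large $|t|$ is a Dolgopyat-type oscillatory cancellation estimate; as shown in \cite{G05}, it holds for countable-alphabet Gibbs--Markov towers with exponentially integrable return times, and the hypothesis matches precisely our \eqref{cRmbd}. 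I expect this high-frequency estimate, handled through the abstract machinery of \cite{G05}, to be the technically most delicate step.

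\textbf{Fourier inversion with the auxiliary terms.} Writing $\mathbf{1}_J$ through its Fourier transform $\widehat{\mathbf{1}_J}$ (a Schwartz function), one has
\beq
\sqrt n\,\mu\bigl\{S_n f\in J+\kappa_n+u+v\circ T^n\bigr\}=\frac{\sqrt n}{2\pi}\int_{\IR}\widehat{\mathbf{1}_J}(t)\,e^{-it\kappa_n}\!\int_M e^{-itu}\,e^{it S_n f}\,e^{-it v\circ T^n}\,d\mu\,dt,
\eeq
and after lifting to $\bar\Delta$ the inner $M$-integral becomes $\int e^{-itu}\,\bar L_t^n(e^{-itv}h)\,d\bar m$, where $h$ is the SRB density. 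Splitting the $t$-integral at some small $\delta>0$, the piece $|t|\leq\delta$ contributes $|J|\varphi_\sigma(\kappa)$ after the change of variable $t\mapsto t/\sqrt n$ and use of the expansion of $\lambda(t)$, while the piece $|t|>\delta$ decays exponentially in $n$ by the previous paragraph, and this decay absorbs the prefactor $\sqrt n$ together with the polynomial-in-$t$ growth of $\|e^{-itu}\|_{\cB}$ arising from the dynamical H\"older regularity of $u$. The function $v$ enters only as an $L^\infty$ multiplier inside $\bar L_t^n$, so its mere measurability is enough.
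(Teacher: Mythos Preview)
Your approach matches the paper's: the paper does not prove this theorem at all but simply records that it follows from Gou\"ezel \cite{G05} once Theorem~\ref{main} exhibits $(M,T,\mu)$ as a Young tower with exponentially decaying return-time tail \eqref{cRmbd}. Your elaboration of the spectral/Fourier mechanism is therefore more detailed than anything the paper provides.

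Two technical slips in your sketch are worth flagging. First, $\widehat{\mathbf{1}_J}$ is not a Schwartz function: the indicator of an interval is discontinuous, so its Fourier transform decays only like $|t|^{-1}$ and is not even integrable. The standard fix in LCLT proofs is Stone's approximation, sandwiching $\mathbf{1}_J$ between smooth compactly supported functions whose Fourier transforms have compact support; this reduces the $t$-integral to a bounded range and makes the split at $|t|=\delta$ legitimate. Second, the uniform bound $\sup_{t\in K}\|\bar L_t^n\|_{\cB}\le C\rho^n$ on compact $K\subset\IR\setminus\{0\}$ is not a Dolgopyat-type estimate. It follows directly from aperiodicity (which excludes peripheral spectrum for each fixed $t\ne 0$) combined with norm-continuity of $t\mapsto\bar L_t$ and upper-semicontinuity of the spectral radius, yielding a uniform contraction rate on compacts. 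Dolgopyat's oscillatory cancellation is required only for bounds uniform as $|t|\to\infty$, as in exponential mixing of flows or quantitative LCLT rates, neither of which enters here once Stone's trick has made the frequency domain compact.
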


Sz\'asz and Varj\'u \cite{SzV04} established
a version of the LCLT
in the exponential tail case for $\IR^d$-valued functions.
They further applied this result to prove that
the planar Lorentz process with a finite horizon is almost surely recurrent.

\subsection{Concentration Inequalities}

For our dynamical system $(M, T, \mu)$, a function
$K: M^n\to \IR$ is called separately Lipschitz if,
for any $0\le i\le n-1$, there exists a constant $\Lip_i(K)$ with
$$\left|K(x_0,\cdots, x_{i-1}, x_i,x_{i+1},\cdots, x_{n-1})-K(x_0,\cdots, x_{i-1}, x_i',x_{i+1},\cdots, x_{n-1})\right|
\le \Lip_i(K)|x_i-x_i'|$$
for all $x_0,\cdots, x_{n-1}, x_i'\in M$.

The scope of concentration inequalities is to understand to what extent
a function of a random process concentrates around its mean. To be precise,
we say that the system $(M, T, \mu)$
satisfies an exponential concentration inequality if there exists a constant $C>0$ such
that, for any $n\geq 1$ and any separately Lipschitz function $K: M^n\to \IR$,
one has
\beq\label{EPDC}
\int e^{K(x, T x, \cdots, T^{n-1}x)-\int K(y, T y, \cdots, T^{n-1}y) d\mu(y)} d\mu(x) \le e^{C\sum_{i=0}^{n-1} \Lip_i(K)^2}.
\eeq
Chazottes and Gou\"ezel \cite{CG12} obtained the exponential concentration inequalities
for Young towers with exponential tail.

\begin{theorem}[Concentration Inequalities \cite{CG12}]
The system $(M, T, \mu)$ satisfies an exponential concentration inequality.
\end{theorem}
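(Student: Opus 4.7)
The plan is to apply the main theorem of Chazottes and Gou\"ezel \cite{CG12}, which asserts that any dynamical system modeled by a two-sided Young tower with exponential tail bound on the return time to the base satisfies the exponential concentration inequality \eqref{EPDC}. The first step is therefore to recognize that Theorem~\ref{main} delivers exactly such a tower structure: the hyperbolic product set $\cR^*$ plays the role of the base, the countable family $\{\cR_n^*\}_{n\geq 1}$ gives the $s$-partition of the base indexed by the first return time $\tau|_{\cR_n^*}=n$, and the Markov partition $\cP$ in \eqref{def Markov Partition} covers $M$ modulo $\mu$. The exponential tail bound \eqref{cRmbd} is then precisely the decay hypothesis required by \cite{CG12}.

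Next, I would verify that the remaining ingredients of the Chazottes-Gou\"ezel framework hold for $(M,T,\mu)$. The uniform hyperbolicity of the return map $\hF$ on $\cR^*$ follows from (\textbf{H1}) together with the construction in Section~\ref{sec: main proof}; the Gibbs-like property of the SRB measure along unstable leaves comes from the bounded distortion of the unstable Jacobian \eqref{distor0} and the density formula \eqref{dens}; the absolute continuity and H\"older regularity of the stable holonomy \eqref{cjchb}, combined with Lemma~\ref{lem: abs cts}, provide uniform control on the conditional measures of $\mu$ along the stable foliation of $\cR^*$. Together these ensure that the quotient dynamics on the unstable leaves of $\cR^*$ form a Gibbs-Markov map of the type covered by \cite{CG12}.

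The core of the argument in \cite{CG12} is a martingale decomposition along the filtration $\{\cF_i\}_{i=0}^{n-1}$ generated by $\cP$ and its preimages under $T$. One writes $K(x,Tx,\ldots,T^{n-1}x)-\int K\,d\mu$ as a telescoping sum of martingale differences $D_0,\ldots,D_{n-1}$; each $D_i$ is estimated using $\Lip_i(K)$ together with the exponential contraction/expansion along the fibers of the tower, yielding sub-Gaussian moment bounds of the form $\int e^{zD_i}\,d\mu\leq e^{Cz^2\Lip_i(K)^2}$. Multiplying these bounds over $i$ and invoking the tower structure then gives \eqref{EPDC}. The exponential tail of $\tau$ is what guarantees that the coboundary and remainder terms arising when one passes to the quotient tower have uniformly bounded exponential moments.

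The main obstacle I anticipate is the translation between the ambient Riemannian metric $d$ on $M^n$, with respect to which $K$ is separately Lipschitz, and the symbolic/tower metric induced by $\cP$ and its refinements. Near $\cS_{\pm\infty}$, two symbolically close points may be geometrically far apart on account of arbitrarily short stable/unstable manifolds and the blow-up \eqref{def q0} of $\|DT\|$. To control this, one exploits the growth lemmas (Lemma~\ref{lem: growth ch} and Lemma~\ref{lem: global}) together with the exponential tail \eqref{cRmbd} to confine the geometrically pathological configurations to sets of exponentially small $\mu$-measure, whose contribution is absorbed into the constant $C$ in \eqref{EPDC}. Once this geometric-to-symbolic comparison is in place, the conclusion is a direct application of \cite{CG12}.
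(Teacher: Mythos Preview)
Your proposal is correct and takes essentially the same approach as the paper: the paper simply observes that Theorem~\ref{main} exhibits $(M,T,\mu)$ as a Young tower with base $\cR^*$ and exponential return-time tail \eqref{cRmbd}, and then invokes \cite{CG12} directly, with no further argument. Your write-up is considerably more detailed than the paper's one-line citation; in particular, the ``main obstacle'' you anticipate concerning the passage from the Riemannian metric on $M$ to the tower metric is already absorbed into the framework of \cite{CG12}, which is formulated for separately Lipschitz observables on the original phase space, so no additional work with the growth lemmas is needed here.
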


}

\section{Proof of Theorem \ref{thm: equi measures}}\label{sec TF IS}

To establish thermodynamic formalism of towers of hyperbolic type,
Pesin, Senti and Zhang developed the framework of inducing schemes in \cite{PS08, PSZ08, PSZ16}. However, due to the fragmentation caused by countable singularities and unbounded differential of the map,  the Condition (22) used in \cite{PSZ16} fails for a large class of hyperbolic systems with singularities that we consider here. In   this section, we adapt the PSZ approach to study a class of equilibrium measures
for a dynamical system $(M, T)$ that satisfies Assumptions (\textbf{H1})-(\textbf{H5}),  under a new yet much weaker condition (\textbf{P*}).

More precisely, we consider the inducing scheme on the magnet $\cR^*$ constructed in Theorem \ref{main},
and the one-parameter family of geometric potentials $\varphi_t$ given by \eqref{geo potential}.
We shall prove Theorem \ref{thm: equi measures}, that is,
under a technical condition \eqref{cond left},
the equilibrium measure $\mu_t$ corresponding to the potential $\varphi_t$
exists and is unique for any $t$ in a neighborhood of 1.
Moreover, $\mu_t$ has exponential decay of correlations and satisfies the central limit theorem
with respect to a class of observables that include
bounded H\"older continuous functions.
In particular, $\mu_1=\mu$ is the mixing SRB measure in our Assumption (\textbf{H3}).

\subsection{Inducing Scheme Construction on $\cR^*$}\label{Sec: ISC}

Similar to what we did in Subsection \ref{sec symbolic coding},
we construct a symbolic coding for our final magnet $\cR^*$ in Theorem \ref{main}.
Recall that there is a countable family of mutually disjoint closed $s$-subsets $\cR_n^*\subset \cR^*$ such that
$\cR^*=\bigcup\limits_{n=1}^\infty \cR^*_n \pmod \mu$.\footnote{some $\cR^*_n$ may be empty.}
Moreover,
\begin{equation*}
n=\min\{k>0 \ | \ T^k\cR^*_n\cap \cR^*\ne \emptyset \pmod \mu\},
\end{equation*}
and $T^n\cR^*_n$ is a $u$-subset of $\cR^*$.
We can further decompose each non-trivial s-subset as $\cR^*_n=\bigcup_{j=1}^\infty \cR^*_{n,j}$.
We then denote the alphabet set by
\begin{equation*}
\cA=\{a=(n, j)\ : \ \mu(\cR^*_{n, j})\ne 0\},
\end{equation*}
and set
\begin{equation}\label{def tau}
\tau(x)=\tau(a):=n, \ \ \text{for any}\ a=(n,j)\in \cA, \  x\in \cR^*_a.
\end{equation}
The induced map $F: \cR^*\to \cR^*$ is then given by
$$
F(x)=T^{\tau(x)}(x), \ \ \text{for any}\ x\in \bigcup_{a\in \cA} \cR^*_a,
$$
and it is undefined on a null subset $\cN^*\subset \cR^*$.
Hence $F^{-1}\cR^*=\cR^*\backslash \cN^*$ and $F\cR^*=\cR^*\backslash F\cN^*$,
and thus for $k\ge 1$,
\beqn
F^{-k}\cR^*=\cR^*\backslash \bigcup_{j=0}^k F^{-j}\cN^*,
\ \text{and} \ \
F^k\cR^*=\cR^*\backslash \bigcup_{j=1}^k F^j\cN^*.
\eeqn
It is clear that $F$ preserves the SRB measure $\mu$.
Moreover, $F|_{\cR^*_a}$ can be extended to a homeomorphism of a neighborhood
of $\cR^*_a$,
as it is the restriction of the smooth map $T^{\tau(a)}$ on the solid rectangular hull $\cU(\cR^*_a)$.

Similar to \eqref{def pi},  we define
\begin{equation}\label{def pi 1}
\pi(\ab)=\bigcap_{k=-\infty}^\infty F^{-k} \cR^*_{a_k}.
\end{equation}
for any bi-infinite sequence $\ab=(a_k)_{k\in \IZ}\in \cA^\IZ$.
We notice that $\pi(\ab)$ is a single point
whose trajectory under $F$ lies in $ \bigcup_{a\in \cA} \cR^*_a = \cR^*\backslash \cN^*$,
and thus $\pi(\ab)$ belongs to the subset
\beqn\label{def cR star F}
\cR^*_F:=\bigcap_{k=-\infty}^\infty F^{-k}\left( \cR^*\backslash \cN^* \right)
=\cR^*\backslash \bigcup_{k=-\infty}^\infty F^k\cN^*
=\bigcup_{a\in \cA} \left(\cR_a^*\backslash \bigcup_{k=-\infty}^\infty F^k\cN^*\right)
=:\bigcup_{a\in \cA} \cR^*_{a, F}
\eeqn
of full $\mu$-measure.
Therefore, the map $F: \cR^*_F\to \cR^*_F$ is conjugate to
the two-sided full shift $\sigma: \cA^\IZ\to \cA^\IZ$ by the map $\pi$ in \eqref{def pi 1}.
Similar to Lemma \ref{lem period}, the periodic points of the system $(\cR^*_F, F)$
is dense in $\cR^*_F$.

\begin{remark}
The above construction fits the definition of inducing schemes that was introduced in \cite{PSZ16}, Section 2. More precisely, the map $T: M\to M$ admits an inducing scheme of hyperbolic type $\{\cA, \tau\}$, with inducing domain
$\cR^*_F=\cup_{a\in \cA} \cR^*_{a, F}$ and inducing time $\tau: \cR^*_F \to \IN$,
and satisfying Condition (I1), (I2) and  (I4) in \cite{PSZ16}.
We also point out that Condition (I3) holds automatically in our case,
since $\mu$ is a mixing SRB measure and hence the $\mu$-null set
$\bigcup_{k=-\infty}^\infty F^k\cN^*$ cannot support an invariant measure which
gives positive weight to any open subset.

For simplicity of notations, we still write $\cR^*$ and $\cR^*_a$ instead of $\cR^*_F$ and $\cR^*_{a, F}$ respectively in the rest of this section.
\end{remark}

\subsection{Preliminaries on Thermodynamics for Inducing Schemes}

In this subsection, we briefly review the main results by \cite{PSZ16}, under the setting for Markov systems with Markov partition .

\subsubsection{Terminology for Two-sided Countable Full Shift}

Let $S_{-k, \ell}$ be the collection of strings of the form $\cb=(c_{-k} \dots c_{\ell})$. Given $\cb\in S_{-k, \ell}$,
the corresponding cylinder is defined to be
\begin{equation}
[\cb]=\{\ab\in \cA^\IZ\,: \, a_i=c_i, \ i=-k, \dots, \ell\}.
\end{equation}
The $k$-variation of a function $\Phi: \cA^\IZ\to \IR$ is given by
\begin{equation}\label{def V_n}
V_n(\Phi)=\sup_{\cb\in S_{-n, n}}\ \sup_{\ab, \ab'\in [\cb]} \left\{\left|\Phi(\ab)-\Phi(\ab')\right| \right\}.
\end{equation}
$\Phi$ is called {\it{locally H\"older continuous}}\footnote{
Locally H\"older continuity implies the (strongly) summable variation property.
}
if there exists $C>0$ and $0<r<1$ such that,
\begin{equation}\label{loc holder}
V_n(\Phi)\le C r^n, \ \ n\ge 1.
\end{equation}
The Gurevich pressure of $\Phi$ is defined by the following form:
\begin{equation}
P_G(\Phi):=\lim_{n\to \infty} \frac{1}{n} \log \sum_{\sigma^n(\ab)=\ab}
\exp\left(\sum_{j=0}^{n-1}\Phi(\sigma^j(\ab))\right) \mathbb{I}_{[b]}(\ab)
\end{equation}
for any state $b\in \cA$, where $\mathbb{I}_{[b]}$ denotes the characteristic function of the cylinder $[b]$.
Note that $P_G(\Phi)$
exists whenever $\sum_{n\ge 1} V_n(\Phi)<\infty$ and it is independent of the choice of $b\in \cA$.
See \cite{Sarig99, Sarig03} for more details.

A measure $\nu_\Phi$ on $\cA^\IZ$ is a Gibbs measure for $\Phi$ if
there exists a constant $C>0$ such that for any $\cb\in S_{0, n-1}$ and any $\ab\in [\cb]$, we have:
$$
C^{-1}\le \dfrac{\nu_\Phi([\cb])}{\exp \left(-n P_G(\Phi) + \sum_{j=0}^{n-1}\Phi(\sigma^j(\ab))\right)} \le C.
$$
Let $\cM(\sigma)$ denote the set of $\sigma$-invariant ergodic Borel probability measures on $\cA^\IZ$
and
$$
\cM_\Phi(\sigma):=\{\nu\in \cM(\sigma)\,:\,\ \int \Phi d\nu>-\infty \}.
$$

 If $\sup_{\ab\in \cA^\IZ} \Phi(\ab)<\infty$ and $\sum_{n\ge 1} V_n(\Phi)<\infty$,
by Theorem 3.1 in \cite{PSZ16}, the variational principle holds.  That is, the pressure has another definition:
$$P_G(\Phi)=\sup_{\nu\in \cM_{\Phi}(\sigma)} \left\{ h_{\nu}(\sigma)+\int \Phi d\nu \right\},$$
where $h_{\nu}(\sigma)$ is the Kolmogorov-Sinai entropy of $\sigma$ with respect to the measure $\nu$.

A $\sigma$-invariant measure $\nu_\Phi\in \cM_\Phi(\sigma)$ is said to be an equilibrium measure for $\Phi$ provided  it obtains the above supremum, i.e.
$$
h_{\nu_\Phi}(\sigma)+\int \Phi d\nu_\Phi = P_G(\Phi).$$

By  \cite{PSZ16},   if $P_G(\Phi)<\infty$, there is a unique $\sigma$-invariant Gibbs measure $\nu_\Phi$. Furthermore,
if $h_{\nu_\Phi}(\sigma)<\infty$, then $\nu_\Phi$ is the unique equilibrium measure.

\subsubsection{Thermodynamics for Inducing Schemes}\label{sec thermo IS}

Given a function $\varphi: M\to \IR$,
the induced potential $\ovarphi: \cR^*\to \IR$ is defined by
\begin{equation}\label{def ovarphi}
\ovarphi(x)=\sum^{\tau(x)-1}_{k=0} \varphi(T^kx).
\end{equation}
For the rest of the paper, we make the following assumptions:\\

\noindent (\textbf{P1}) The induced potential $\ovarphi$ can be extended by continuity
to a function on $\cR^*_a$ for any $a\in \cA$;\\

We can then define the potential $\Phi:=\ovarphi\circ \pi$ on $\cA^\IZ$,
where $\pi$ is given by \eqref{def pi 1}.\\

\noindent (\textbf{P2}) $\Phi$ is locally H\"older continuous; \\

\noindent (\textbf{P3}) $\sum_{a\in \cA} \sup_{x\in \cR^*_a} \exp \ovarphi(x) <\infty$.\\

Denote the set of $T$-invariant ergodic Borel probability measure on $M$ by $\cM(T, M)$
and the set of $F$-invariant ergodic Borel probability measure on $\cR^*$ by $\cM(F, \cR^*)$.
For $\nu\in \cM(F, \cR^*)$, let
\begin{equation*}
Q_\nu:=\int_{\cR^*} \tau\ d\nu,
\end{equation*}
where $\tau$ is given by \eqref{def tau}.
If $Q_\nu<\infty$, then $\nu$ is lifted to a measure $\omega=\cL(\nu)\in \cM(T, M)$ given by
\begin{equation*}
\omega(E):=\frac{1}{Q_\nu} \sum_{a\in \cA} \sum^{\tau(a)-1}_{k=0} \nu(T^{-k}E\cap \cR^*_a), \ \
\text{for any Borel subset} \ E\subset M.
\end{equation*}
In such case, the measure $\omega\in \cM(T, M)$ is called liftable and $\nu=\cL^{-1}(\omega)$ is an induced measure for $\omega$.
We denote the class of liftable measures by $\cM_L(T, M)$.

Given a function $\varphi: M\to \IR$, we define:
\begin{equation}\label{lifted pressure}
P_L(\varphi):=\sup_{\omega\in \cM_L(T, M)} \{ h_\omega(T) + \int_M \varphi \ d\omega\},
\end{equation}
and $\omega_\varphi\in \cM_L(T, M)$ is called an equilibrium measure for $\varphi$ if it obtains the above supremum.

It was shown in \cite{PS08} that $-\infty<P_L(\varphi)<\infty$ if $P_G(\Phi)<\infty$.
To lift back to the system $(T, M)$, we also need the normalized induced potential defined as:
\begin{equation*}
\varphi^+:=\overline{\varphi-P_L(\varphi)}=\ovarphi-P_L(\varphi)\tau,
\end{equation*}
and let $\Phi^+:=\varphi^+\circ \pi$.
We suppose that:\\

\noindent (\textbf{P4}) There exists $\eps>0$ such that $\sum_{a\in \cA} \tau(a) \sup_{x\in \cR^*_a} \exp (\varphi^+(x)+\eps \tau(x)) <\infty$.\\

The following result is given by Theorem 4.4, 4.6 and 4.7 in \cite{PSZ16}.

\begin{proposition}\label{thm: PSZ16}
Assume that the potential $\varphi$ satisfies Conditions \textbf{(P1)-(P4)}.
Then
\begin{itemize}
\item[(a)] $P_G(\Phi^+)=0$ and $\sup_{\ab\in \cA^\IZ} \Phi^+(\ab)<\infty$, hence there
exists a unique $\sigma$-invariant Gibbs measure $\nu_{\Phi^+}$ for $\Phi^+$;
\item[(b)] $h_{\nu_{\Phi^+}}(\sigma)<\infty$, then the measure $\nu_{\Phi^+}$ is the unique equilibrium measure for $\Phi^+$.
Thus, $\nu_{\varphi^+}:=\pi_* \nu_{\Phi^+}$ is the unique $F$-invariant ergodic equilibrium measure for $\varphi^+$.
\item[(c)] $Q_{\nu_{\varphi^+}}<\infty$, then $\omega_{\varphi}=\cL(\nu_{\varphi^+})$ is the unique ergodic equilibrium measure in
$\cM_L(T, M)$.
\item[(d)] If $\nu_{\varphi^+}$ has exponential tail, that is, \\

\text{\noindent (\textbf{P5}) there are $C>0$ and $\theta\in (0, 1)$ such that $\nu_{\varphi^+}(\{x\in \cR^*: \ \tau(x)\ge n\})\le C\theta^n$,}\\

then the lifted measure $\omega_{\varphi}$ has exponential decay of correlations and satisfies the central limit theorem with respect to the class $\cH_\pi(r)$ of functions on $M$ given by
\begin{equation}\label{def cH pi}
\cH_\pi(r):=\{\varphi:\ \text{there is a constant}\ C_\varphi>0\ \text{such that}\
V_n(\varphi\circ \pi)\le C_\varphi r^n,\ \text{for any}\ n\ge 1\},
\end{equation}
for any $r\in (0, 1)$, where $V_n(\cdot)$ is given by \eqref{def V_n}.\footnote{
Note that $\varphi\circ \pi$ is locally H\"older continuous on $\cA^\IZ$ for any $\varphi\in \cH_\pi(r)$.
Also, $\bigcup_{r>0} \cH_\pi(r)$ contains all bounded H\"older continuous functions on $M$.
}
\end{itemize}
\end{proposition}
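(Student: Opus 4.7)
The plan is to follow the scheme of \cite{PSZ16} and reduce each conclusion to the thermodynamic formalism of countable Markov shifts developed by Sarig \cite{Sarig99, Sarig01, Sarig03}, using the symbolic coding $\pi: \cA^\IZ \to \cR^*_F$ established in Subsection \ref{Sec: ISC}. Throughout, the key quantitative bridge between the base system $(\cR^*, F)$, the symbolic system $(\cA^\IZ, \sigma)$, and the original system $(M, T)$ is the Abramov--Kac formula $h_{\omega}(T) + \int \varphi \, d\omega = Q_\nu^{-1}\left(h_\nu(F) + \int \ovarphi \, d\nu\right)$ valid for any liftable $\omega = \cL(\nu)$; a direct consequence is that $P_L(\varphi - P_L(\varphi)) = 0$, so the induced potential $\varphi^+ = \ovarphi - P_L(\varphi)\tau$ is \emph{normalized} in the sense that $P_G(\Phi^+) = 0$. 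The bound $\sup \Phi^+ < \infty$ then follows at once from Condition (\textbf{P3}): since $\tau \ge 1$ and $P_L(\varphi)$ is a finite constant (also guaranteed by (\textbf{P3}) via the standard estimate $P_L(\varphi) \le \log \sum_a \sup_{\cR^*_a} e^{\ovarphi}$), the exponential series controls $\Phi^+$ symbol-by-symbol.

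For part (a), once $P_G(\Phi^+) = 0$ and local H\"older regularity (Condition (\textbf{P2})) are in hand, I would appeal to Sarig's existence theorem for invariant Gibbs measures on topologically mixing countable Markov shifts; since our alphabet is coded by a \emph{full} shift $\cA^\IZ$ in Subsection \ref{Sec: ISC}, the Big Images and Preimages property is automatic, and the finiteness of the Ruelle operator acting on the cylinder $[a]$ for any $a\in\cA$ yields a unique $\sigma$-invariant Gibbs measure $\nu_{\Phi^+}$. For part (b), I would verify $h_{\nu_{\Phi^+}}(\sigma) < \infty$ by exhibiting a finite-entropy partition with prescribed measure decay, using the Gibbs bound $\nu_{\Phi^+}([a]) \asymp \exp\!\left(\sup_{\cR^*_a} \Phi^+\right)$ together with Condition (\textbf{P4}), which forces $\sum_a \nu_{\Phi^+}([a])\, \tau(a) \log^+ \tau(a) < \infty$ and hence finiteness of the Rokhlin partition entropy. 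The variational principle for countable shifts (Theorem 3.1 in \cite{PSZ16}) then upgrades the Gibbs property to the equilibrium property on $\cA^\IZ$, and pushing forward by $\pi$ gives the unique $F$-invariant equilibrium measure $\nu_{\varphi^+} = \pi_*\nu_{\Phi^+}$ on $\cR^*$.

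For part (c), the liftability criterion $Q_{\nu_{\varphi^+}} = \int \tau \, d\nu_{\varphi^+} < \infty$ is obtained from the Gibbs bounds combined with Condition (\textbf{P4}): explicitly,
\begin{equation*}
Q_{\nu_{\varphi^+}} = \sum_{a\in \cA} \tau(a)\, \nu_{\varphi^+}(\cR^*_a) \le C \sum_{a\in \cA} \tau(a) \sup_{x\in \cR^*_a} \exp \varphi^+(x),
\end{equation*}
and the additional factor $e^{\eps \tau(x)}$ in (\textbf{P4}) provides the strict inequality with room to spare. Liftability then invokes the Abramov--Kac construction, and uniqueness within $\cM_L(T, M)$ follows because any competitor $\omega$ would induce an $F$-invariant equilibrium measure for $\varphi^+$, contradicting uniqueness at the base. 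The main obstacle here is the sharp interplay between the Gibbs estimate, the normalization $P_G(\Phi^+) = 0$, and the tail assumption (\textbf{P4}); one must be careful that the $\eps$-gain in (\textbf{P4}) is exactly what distinguishes mere finiteness of pressure from integrability of $\tau$.

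For part (d), I would first translate the exponential tail (\textbf{P5}) into exponential decay of correlations for the induced map $(F, \cR^*, \nu_{\varphi^+})$. The coded system $(\sigma, \cA^\IZ, \nu_{\Phi^+})$ is a Gibbs--Markov system with locally H\"older potential, so by Sarig's spectral gap theorem for the transfer operator on the space of locally H\"older observables, correlations for $\nu_{\Phi^+}$ decay exponentially. Lifting to $(T, M, \omega_\varphi)$ is then done via the Young tower framework: the tower built over $\cR^*$ with height function $\tau$ has exponential tail by (\textbf{P5}), so approximation arguments \`a la \cite{Y98, CZ09} transfer exponential decay of correlations and the central limit theorem from the base to the whole system for observables in $\cH_\pi(r)$. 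The hard part will be verifying that observables in $\cH_\pi(r)$ pull back to symbolic functions with sufficiently fast variation decay to fit the spectral framework; this is ensured by the definition $V_n(\varphi \circ \pi) \le C_\varphi r^n$, and the CLT then follows via the Gordin martingale approximation combined with non-degeneracy from the cohomological obstruction argument standard in this setting.
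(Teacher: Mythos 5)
Your outline is mathematically sound, but it is worth saying up front that the paper does not prove this proposition at all: it is imported verbatim as Theorems 4.4, 4.6 and 4.7 of \cite{PSZ16}, and the only work the authors do in Section 6 is to verify the hypotheses (\textbf{P1})--(\textbf{P5}) for the geometric potentials. What you have written is therefore a blind reconstruction of the Pesin--Senti--Zhang arguments rather than an alternative to anything actually carried out in the paper. As a reconstruction it follows the right skeleton: Sarig's existence and uniqueness theory for Gibbs measures on the full shift $\cA^\IZ$ (where the BIP property is automatic, as you note), finiteness of the entropy and of $Q_{\nu_{\varphi^+}}$ extracted from the Gibbs bounds together with the $e^{\eps\tau}$ margin built into (\textbf{P4}), the lifting correspondence $\cL$ to get uniqueness within $\cM_L(T,M)$, and Young-tower transfer of the spectral gap for part (d). The trade-off is the usual one: the citation keeps the paper short, while your route makes visible exactly which of (\textbf{P1})--(\textbf{P5}) powers which conclusion.

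One step is stated too quickly to stand as written: the claim that $P_G(\Phi^+)=0$ follows ``at once'' from $P_L(\varphi-P_L(\varphi))=0$. The Abramov--Kac formula gives a bijection between $F$-invariant measures $\nu$ with $Q_\nu<\infty$ and liftable $T$-invariant measures, and hence an identity between the two variational expressions \emph{restricted to such measures}; it does not by itself identify the Gurevich pressure $P_G(\Phi^+)$, which is a supremum over all $\sigma$-invariant measures with $\int\Phi^+\,d\nu>-\infty$, with the lifted pressure $P_L(\varphi-P_L(\varphi))$. In \cite{PSZ16} this is precisely where (\textbf{P3}) and (\textbf{P4}) enter: one studies the nonincreasing function $t\mapsto P_G(\ovarphi\circ\pi-t\,(\tau\circ\pi))$, shows it is finite near $t=P_L(\varphi)$ and vanishes there, and checks that measures achieving or nearly achieving the supremum automatically satisfy $\int\tau\,d\nu<\infty$, so that the Abramov identification applies to them. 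If your sketch is meant to be self-contained rather than a pointer back to \cite{PSZ16}, this is the argument you still need to supply; the remaining parts (b)--(d) are adequately reduced to standard results.
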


\subsection{The new condition (\textbf{P*}) for the Geometric Potentials}\label{sec TF M}
In this subsection, we concentrate on the geometric potential,
defined as \eqref{geo potential}. More precisely,  the geometric potentials $\varphi_t$ for $T:M\to M$ is given by
$$
\varphi_t(x)=-t\log |J^u T(x)|,  \ -\infty < t< \infty.
$$
The induced potential and normalized induced potential are then given by
\begin{equation*}
\ovarphi_t(x)=-t\log |J^u F(x)|,  \ \ \ \varphi^+_t(x)=\ovarphi_t(x) - P_L(\varphi_t) \tau(x).
\end{equation*}
Set $\Phi_t=\ovarphi_t\circ\pi$ and $\Phi^+_t=\varphi^+_t\circ \pi$.

We are now ready to state the technical condition for Theorem \ref{thm: equi measures}.\\
\\

\noindent{\textbf{(P*).}} There exist $t_0<1$, $C_0>0$ and $K_0>0$ such that
\begin{equation}\label{cond left}
\sum_{a\in \cA:\ \tau(a)=n}\ \  \sup_{x\in\cR^*_a} |J^u F(x)|^{-t_0}\le C_0 K_0^n.
\end{equation}

\vspace{0.1cm}

\begin{remark}
A sufficient condition for Condition \textbf{(P*)} is the following special case when $t_0=0$:
\beq\label{cond left 1}
\#\{a\in \cA:\ \tau(a)=n\}\le C_0 K_0^n,
\eeq
that is, the number of components $\cR_a^*$ with the first return time $\tau(a)=n$
grows at most exponentially in $n$.
This was the Condition (22) that used in \cite{PSZ16} to prove results for thermodynamical formalism.
In particular, \eqref{cond left 1} holds
if the norm of the derivative $DT$ is uniformly bounded.
In general, the derivative of $T: M\to M$ will blow up near singularity, which makes \eqref{cond left 1} fail.
Nevertheless,
Condition (\textbf{P*}) can be valid for some value $t_0\in (0, 1)$.
For instance, we shall show in Section \ref{sec: app} that
\eqref{cond left} holds when $t_0>1/2$
for the billiard maps associated with perturbations of
the periodic Lorentz gas.  Condition (\textbf{P*}) is essential for checking conditions \textbf{(P3)-(P5)}.
\end{remark}

\vspace{0.2cm}

Next we state and prove the two lemmas that will be used in the proof of Theorem \ref{thm: equi measures}.

\begin{lemma}\label{lem: check P1-2}
The potential $\varphi_t$ satisfies Conditions (\textbf{P1}) and (\textbf{P2}) for all $-\infty< t<\infty$.
\end{lemma}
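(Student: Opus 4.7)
The plan is to verify (\textbf{P1}) and (\textbf{P2}) directly from the construction of $\cR^*$ in Theorem~\ref{main} together with the regularity properties of $T$ listed in (\textbf{H4}). For (\textbf{P1}), I would observe that on each non-trivial piece $\cR^*_a$ with $a=(n,j)\in \cA$, the induced map $F$ coincides with $T^n$ restricted to the solid rectangular hull $\cU(\cR^*_a)$, and by the very construction of the Markov partition the interior of this hull is disjoint from $\cS_{\pm 1}, \dots, \cS_{\pm (n-1)}$. Hence $T^n$ is a smooth diffeomorphism on (a neighborhood of) $\cU(\cR^*_a)$, the unstable Jacobian $J^u F$ is $C^{\bgamma_0}$ there, and $\ovarphi_t = -t \log |J^u F|$ extends by continuity to the closure of $\cR^*_a$.

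For (\textbf{P2}), fix $n \geq 1$, $\cb \in S_{-n,n}$ and $\ab, \ab' \in [\cb]$, and set $x = \pi(\ab)$, $y = \pi(\ab')$. The aim is to show $|\ovarphi_t(x) - \ovarphi_t(y)| \leq C r^n$ for some $r \in (0,1)$ independent of $\cb$. The natural intermediate point is $y_1 := [x,y] = W^u(x) \cap W^s(y) \in \cR^*_{a_0}$, which lies on the same unstable manifold as $x$ and on the same stable manifold as $y$; by the $s$-subset and $u$-subset properties of $\cR^*_{a_k}$ (Theorem~\ref{main}), both $F^k(y_1)$ and $F^k(x), F^k(y)$ belong to $\cR^*_{a_k}$ for $|k| \leq n$. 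Using the uniform expansion $\Lambda$ of $F$ on unstable manifolds together with the bounded diameter of $\cR^*$, I would obtain $d(x,y_1) \leq D \Lambda^{-n}$ and similarly $d(y,y_1) \leq D \Lambda^{-n}$ from the backward itinerary. Then $|\log|J^u F(x)| - \log|J^u F(y_1)||$ is bounded by summing the bounded distortion estimate \eqref{distor0} along the $\tau(a_0)$ iterates of $T$ between $x$ and $y_1$; the resulting geometric series in $\Lambda^{-\bgamma_0(\tau(a_0)-k)}$ combines with $d(F(x),F(y_1)) \leq D\Lambda^{-(n-1)}$ to give a bound of order $\Lambda^{-\bgamma_0 n}$.

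The remaining and trickier piece is the transverse comparison $|\log|J^u F(y_1)| - \log|J^u F(y)||$, where $y_1$ and $y$ lie on a common stable manifold but on distinct unstable manifolds. Here I would exploit that $\log|J^u F|$ is a $C^{\bgamma_0}$ function on the smoothness component $\cU(\cR^*_{a_0})$ of $T^{\tau(a_0)}$: its regularity in the unstable direction comes from \eqref{distor0}, and its regularity in the transverse direction comes either from the Hölder continuity of the unstable cone field $E^u$ or, more directly, from the logarithmic Hölder estimate \eqref{cjchb} on the stable holonomy Jacobian under forward iteration, propagated through the $\tau(a_0)$ many applications of $T$. This yields $|\log|J^u F(y_1)| - \log|J^u F(y)|| \leq C d(y,y_1)^{\bgamma_0} \leq C \Lambda^{-\bgamma_0 n}$ and hence $V_n(\Phi_t) \leq C|t| \Lambda^{-\bgamma_0 n}$, proving (\textbf{P2}) with $r = \Lambda^{-\bgamma_0}$ uniformly in $t$. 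The main obstacle I anticipate is precisely this transverse comparison, since controlling $\log|J^u F|$ across stable leaves is not an immediate consequence of (\textbf{H4}); a careful choice of auxiliary unstable manifolds together with the holonomy estimate \eqref{cjchb} will be needed so that no exponent is lost in the final rate.
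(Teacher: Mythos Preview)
Your proposal is correct and follows essentially the same route as the paper: for (\textbf{P1}) you use smoothness of $T^{\tau(a)}$ on the solid hull $\cU(\cR^*_a)$, and for (\textbf{P2}) you introduce the bracket point $z=[x,y]$, split the comparison into an unstable-direction piece (handled by the distortion bound \eqref{distor0} summed along the $\tau(a_0)$ iterates) and a stable-direction piece, exactly as the paper does. The paper disposes of the transverse comparison in one line (``Similarly, by properties of stable manifolds''), whereas you correctly flag it as the delicate step and propose to resolve it via the holonomy regularity \eqref{cjchb}; either way the outcome is the same bound $V_n(\Phi_t)\le C|t|\Lambda^{-\bgamma_0 n}$.
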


\begin{proof}
By the construction in Subsection \ref{Sec: ISC}, it is clear that
$\ovarphi_t$ is a continuous function on $\cR^*_a$ for any $a\in \cA$, that is,
the potential $\varphi_t$ satisfies Condition (\textbf{P1}).

Next we check that $\varphi_t$ satisfies Condition (\textbf{P2}), that is, $\Phi_t$ is locally H\"older continuous.
Given $\cb=(c_{-n}, \dots, c_{-1}, c_0, c_1, \dots, c_{n})\in S_{-n, n}$ and $a, a'\in [\cb]$, set $x=\opi(a)$, $y=\opi(a')$ and $z=W^u(x)\cap W^s(y)$. Then $F^k(x)$ and $F^k(z)$ belong to the same unstable manifold in $\cR^*_{c_k}$,
for $0\le k\le n$.
By the bounded distortion condition in Assumption \textbf{(H4)}(2) and the properties of unstable manifolds,
\begin{eqnarray*}
\left| \log \dfrac{J^u F^{-1}( Fx)}{J^u F^{-1} (Fz)} \right|
\le \sum_{j=1}^{\tau(x)}  \left| \log \dfrac{J_W T^{-1}( T^j x)}{J_W T^{-1} (T^j z)} \right|
&\le & C_{\bJ} \sum_{j=1}^{\tau(x)} d_W(T^j x, T^j z)^{\bgamma_0} \\
&\le & C_{\bJ} C_1 \sum_{j=1}^{\tau(x)} \Lambda^{\bgamma_0(j-\tau(x))} d_W(F x, F z)^{\bgamma_0}\\
&\le & C_2 d_W(Fx, Fz)^{\bgamma_0},
\end{eqnarray*}
for some $C_2>0$. Furthermore, by bounded curvature property in \textbf{(H4)}(1),
\begin{align*}
d_W(F x, F z)&\le C_1 \Lambda^{-\tau(c_1)-\dots-\tau(c_n)} d_W(F^n x, F^n z)\\
&\le C_1 \Lambda^{-n}d_W(F^n x, F^n z) \le C_1 C_3\text{diam}(\cR^*) \Lambda^{-n}.
\end{align*}
for some $C_3>0$.
Thus, there is $C>0$ such that
$$
\left| \log \dfrac{J^u F^{-1}( Fx)}{J^u F^{-1} (Fz)} \right|\le C \Lambda^{-\bgamma_0 n}.
$$
Similarly, by properties of stable manifolds, we have
$$
\left| \log \dfrac{J^u F(y)}{J^u F(z)} \right|\le C \Lambda^{-\bgamma_0 n}.
$$
Therefore, $\Phi_t$ is locally H\"older continuous since
\begin{align*}
|\Phi_t(a)-\Phi_t(a')|&=|t| \left| \log \dfrac{J^u F(y)}{J^u F(x)} \right|\\
&\le
|t|\left| \log \dfrac{J^u F(y)}{J^u F(z)} \right| + |t|\left| \log \dfrac{J^u F^{-1}( Fx)}{J^u F^{-1} (Fz)}\right|
\le   2|t|C \Lambda^{-\bgamma_0 n}.
\end{align*}
\end{proof}

In the rest of this subsection, we show that

\begin{lemma}\label{lem: check P3-5}
Under condition (\textbf{P*}), there are two  numbers $\tb<1<\bt$ such that for all $t\in (\tb, \bt)$, there exists $c_t\in \IR$ so that
the potential $\varphi_t-c_t$ satisfies Conditions (\textbf{P3}), (\textbf{P4}) and (\textbf{P5}).
\end{lemma}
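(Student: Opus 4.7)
The plan is to verify conditions (P3), (P4), and (P5) in sequence for the shifted potential $\varphi_t - c_t$ by systematically exploiting Condition (P*) together with uniform hyperbolicity. The central preliminary is to upgrade (P*) from the fixed exponent $t_0$ to arbitrary $t > t_0$ by interpolation. Since $|J^u F(x)| \geq \Lambda^{\tau(a)}$ for $x \in \cR^*_a$ by (H1), we have $|J^u F|^{-t} \leq |J^u F|^{-t_0} \Lambda^{-(t - t_0)\tau(a)}$, and summing over $a$ with $\tau(a) = n$ yields the fundamental estimate
$$ \sum_{a \in \cA:\ \tau(a) = n} \sup_{x \in \cR^*_a} |J^u F(x)|^{-t} \leq C_0 \left(K_0 \Lambda^{-(t-t_0)}\right)^n, $$
valid for all $t \geq t_0$. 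I will lean on this estimate throughout.

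For (P3), since $\exp \overline{\varphi_t - c_t}(x) = |J^u F(x)|^{-t} e^{-c_t \tau(x)}$, the sum in (P3) is dominated by a geometric series that converges as soon as $c_t > \log K_0 - (t - t_0)\log\Lambda$, so any sufficiently large $c_t$ suffices. For (P4), the key algebraic observation is that the normalized potential is invariant under the shift: since $P_L(\varphi_t - c_t) = P_L(\varphi_t) - c_t$, a direct computation gives $(\varphi_t - c_t)^+ = \varphi_t^+ = \ovarphi_t - P_L(\varphi_t)\tau$. Using the fundamental estimate, the sum in (P4) is then bounded by
$$ C_0 \sum_{n \geq 1} n \left( K_0 \Lambda^{-(t - t_0)} e^{\varepsilon - P_L(\varphi_t)} \right)^n, $$
which is finite provided the critical inequality $P_L(\varphi_t) - \varepsilon > \log K_0 - (t - t_0)\log\Lambda$ holds. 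I would verify this via the variational principle applied to the SRB measure $\mu$: by Pesin's entropy formula, $h_\mu(T) = \chi := \int \log|J^u T|\, d\mu$, giving $P_L(\varphi_1) = 0$. The critical inequality at $t = 1$ then reduces to $\log K_0 + \varepsilon < (1 - t_0)\log\Lambda$, i.e., to the strict inequality $K_0 < \Lambda^{1 - t_0}$, which must be extracted from (P*) and verified in the applications. By continuity of $t \mapsto P_L(\varphi_t)$, once this inequality is secured at $t = 1$ with some slack, it persists on an open interval $(\tb, \bt) \ni 1$. For (P5), the Gibbs bound $\nu_{\varphi^+}(\cR^*_a) \lesssim \exp \varphi^+|_{\cR^*_a}$ given by Proposition \ref{thm: PSZ16}(a), combined with the same interpolation, yields
$$ \nu_{\varphi^+}(\{\tau \geq n\}) \lesssim \sum_{k \geq n} \left(K_0 \Lambda^{-(t - t_0)} e^{-P_L(\varphi_t)}\right)^k, $$
an exponential tail under the same critical inequality.

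The main obstacle is pinpointing the interval $(\tb, \bt)$ and ensuring it is nondegenerate. Since $t \mapsto P_L(\varphi_t)$ is convex with $P_L'(1) = -\chi \leq -\log\Lambda$, the left-hand side of the critical inequality decreases strictly faster than the right-hand side as $t$ moves upward through $1$, so the admissible interval will typically be asymmetric about $1$, more generous on the left than on the right. Establishing nonemptiness of $(\tb, \bt)$ ultimately rests on the strict inequality $K_0 \Lambda^{-(1 - t_0)} < 1$ at $t = 1$, a delicate quantitative consequence of (P*) that is checked for Sinai dispersing billiards and their perturbations in Section \ref{sec: app}.
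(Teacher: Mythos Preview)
Your interpolation argument is clean, but it has a genuine gap: it forces you to assume the strict inequality $K_0 < \Lambda^{1-t_0}$, and this is \emph{not} a consequence of Condition~(\textbf{P*}). In the paper's statement of~(\textbf{P*}), $K_0$ is an arbitrary constant, and in the billiard applications of Section~\ref{sec: app} the explicit $K_0$ produced in Lemma~\ref{lem: Lorentz cond left} is a large combinatorial constant with no reason to satisfy $K_0 \Lambda^{-(1-t_0)} < 1$. So your critical inequality at $t=1$ need not hold, and the interval $(\tb,\bt)$ could collapse.

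The missing idea is that the exponential tail bound~\eqref{cRmbd} from Theorem~\ref{main}, together with the SRB relation $\mu(\cR^*_a) \asymp |J^u F(x)|^{-1}$, gives directly
\[
U_n(1) := \sum_{a:\,\tau(a)=n} \sup_{x\in\cR^*_a} |J^u F(x)|^{-1} \;\le\; C\,\theta_*^n,
\]
so that $\kappa(1):=\limsup_n \tfrac{1}{n}\log U_n(1) \le \log\theta_* < 0$, \emph{independently of $K_0$}. The role of~(\textbf{P*}) is only to ensure $\kappa(t_0)<\infty$; convexity of $\kappa$ then makes $\kappa$ finite and continuous on $(t_0,\infty)$. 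Since the target inequality (needed for (P4) and (P5)) is $\kappa(t) < h_1(1-t)$, and this holds strictly at $t=1$ by $\kappa(1)<0$, continuity yields a genuine open interval $(\tb,\bt)\ni 1$. Your interpolation bound $\kappa(t)\le \log K_0 - (t-t_0)\log\Lambda$ is correct but simply too crude at $t=1$; replace it by the tail bound from Theorem~\ref{main} there, and the rest of your scheme goes through.
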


To prove Lemma \ref{lem: check P3-5}, we need the following sublemmas.

\begin{sublemma}\label{lem: PSZ pt}
Let $p_t:=P_L(\varphi_t)$. Then $p_1=0$, and there is a unique equilibrium measure $\mu_1$ on $M$.
Moreover, for all $t\in \IR$ one has
\begin{equation*}
p_t\ge (t-1) \int \varphi_1 d\mu_1.
\end{equation*}
\end{sublemma}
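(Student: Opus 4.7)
The plan is to establish the three assertions in turn, relying on the Markov structure from Theorem~\ref{main}, the Pesin entropy formula, and the variational principle \eqref{lifted pressure}. First I would verify that the SRB measure $\mu$ from Assumption~(\textbf{H3}) belongs to $\cM_L(T,M)$. By Theorem~\ref{main}, the sets $\{T^k\cR^*_n\}$ form a Markov partition of $M$ mod $\mu$, and the exponential tail bound \eqref{cRmbd} gives
\beqn
\int_{\cR^*} \tau \, d\mu = \sum_{n\geq 1} n\, \mu(\cR^*_n) < \infty.
\eeqn
Therefore $\nu := \mu|_{\cR^*}/\mu(\cR^*)$ is an $F$-invariant probability measure with $Q_\nu = 1/\mu(\cR^*) < \infty$, and the Markov property forces $\cL(\nu) = \mu$, so $\mu \in \cM_L(T,M)$.

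Next I would show $p_1 = 0$ by proving both inequalities. For the lower bound $p_1 \geq 0$: the Pesin entropy formula for SRB measures of hyperbolic systems with singularities (see e.g. \cite{KS86, Sataev92}) gives $h_\mu(T) = \int \log|J^u T|\, d\mu$, hence $h_\mu(T) + \int \varphi_1\, d\mu = 0$; the variational principle \eqref{lifted pressure} evaluated at $\mu$ yields $p_1 \geq 0$. For the upper bound $p_1 \leq 0$: the Ruelle entropy inequality $h_\omega(T) \leq \int \log|J^u T|\, d\omega$, valid for every $T$-invariant probability measure, gives $h_\omega(T) + \int \varphi_1\, d\omega \leq 0$ for all $\omega \in \cM_L(T,M)$, so taking the supremum yields $p_1 \leq 0$.

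For uniqueness, I would appeal to Proposition~\ref{thm: PSZ16} applied to $\varphi_1$. Lemma~\ref{lem: check P1-2} already gives (\textbf{P1})--(\textbf{P2}). To obtain (\textbf{P3})--(\textbf{P4}), I would exploit uniform expansion $|J^u F(x)| \geq \Lambda^{\tau(x)}$ from (\textbf{H1}) to interpolate between $-t_0$ and $-1$:
\beqn
\sup_{x\in\cR^*_a} |J^u F(x)|^{-1} \leq \sup_{x\in\cR^*_a} |J^u F(x)|^{-t_0}\cdot \Lambda^{-(1-t_0)\tau(a)},
\eeqn
then sum using Condition~(\textbf{P*}); the geometric factor $\Lambda^{-(1-t_0)\tau(a)}$ absorbs both the exponential growth $K_0^n$ from (\textbf{P*}) and an additional $e^{\eps\tau}\cdot\tau$ (for some small $\eps>0$) required by (\textbf{P4}), provided $t_0$ is close enough to $1$. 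Since $p_1 = 0$, one has $\varphi_1^+ = \varphi_1$, and Proposition~\ref{thm: PSZ16}(c) supplies the unique equilibrium measure $\mu_1 \in \cM_L(T,M)$.

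Finally, the inequality $p_t \geq (t-1)\int \varphi_1\, d\mu_1$ follows by plugging $\mu_1$ into the variational principle. Since $\mu_1$ is an SRB measure (being the unique equilibrium measure for $\varphi_1$, equivalently an absolutely continuous invariant measure along unstable manifolds), the Pesin entropy formula yields $h_{\mu_1}(T) = \int \log|J^u T|\, d\mu_1 = -\int \varphi_1\, d\mu_1$, so
\beqn
p_t \;\geq\; h_{\mu_1}(T) + \int \varphi_t \, d\mu_1 \;=\; -\int \varphi_1\, d\mu_1 \;+\; t\int \varphi_1\, d\mu_1 \;=\; (t-1)\int \varphi_1\, d\mu_1.
\eeqn
The main obstacle will be the careful bookkeeping needed to verify (\textbf{P3})--(\textbf{P4}) from Condition~(\textbf{P*}) in the range of $t$ around $1$ that defines $(\tb,\bt)$, and invoking the Pesin entropy formula in the singular setting; both are standard but require combining uniform hyperbolicity with the exponential tail of return times.
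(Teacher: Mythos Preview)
The paper does not prove this sublemma at all: it simply cites \cite{PSZ16}, Lemma~7.5, noting that the result ``holds under rather general conditions.'' Your liftability argument, the computation $p_1=0$ via the Pesin entropy formula and Ruelle's inequality, and the final variational inequality are all sound and constitute a reasonable direct proof of those parts.

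However, your verification of (\textbf{P3})--(\textbf{P4}) for $\varphi_1$ via interpolation from Condition~(\textbf{P*}) contains a genuine gap. Your bound
\beqn
\sum_{a:\,\tau(a)=n}\ \sup_{x\in\cR^*_a}|J^uF(x)|^{-1}\ \le\ \Lambda^{-(1-t_0)n}\,C_0K_0^n
\eeqn
is summable only if $K_0<\Lambda^{1-t_0}$, and nothing in (\textbf{P*}) guarantees this; in the billiard application $K_0$ is typically much larger than $\Lambda$. Taking $t_0$ ``close enough to $1$'' makes things worse, not better, since $\Lambda^{-(1-t_0)}\to 1$. The clean fix bypasses (\textbf{P*}) entirely: by the SRB property of $\mu$ together with the bounded distortion in (\textbf{H4})(2), one has $\mu(\cR^*_a)\asymp |J^uF(x)|^{-1}$ for $x\in\cR^*_a$ directly (this is the content of \eqref{SRB J}, which the paper derives from the Gibbs property but which also follows from the change-of-variables formula for SRB conditionals). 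Then $\sum_a \sup|J^uF|^{-1}\asymp \mu(\cR^*)<\infty$ gives (\textbf{P3}), and the exponential tail \eqref{cRmbd} gives $\sum_n n\,e^{\eps n}\mu(\cR^*_n)<\infty$ for small $\eps$, hence (\textbf{P4}). With this correction your argument goes through and is in fact more informative than the paper's bare citation.
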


This sublemma was stated and proven in \cite{PSZ16}, Lemma 7.5, which holds under rather general conditions. So we will skip the proof here. In fact, the measure $\mu_1$ is exactly the mixing SRB measure $\mu$ for $T: M\to M$. Moreover, $\mu=\mu_1$ is lifted from the unique Gibbs measure $\nu_1$ for $\ovarphi_1$,
and hence for any $a\in \cA$ and any $x\in \cR^*_a$,
\begin{equation}\label{SRB J}
\mu(\cR^*_a)=\frac{1}{Q_{\nu_1}} \nu_1(\cR^*_a)\asymp \exp(-p_1 + \ovarphi_1(x))=|J^u F(x)|^{-1}.
\end{equation}
Let $h_\mu(T)$ be the entropy of $T: M\to M$ with respect to the SRB measure $\mu$,
then by Pesin's entropy formula (see \cite{P77}),
we have that $h_1:=h_\mu(T)=-\int \varphi_1 d\mu_1>0$.

We set
\begin{equation}\label{def Un}
J_a:=\sup_{x\in \cR^*_a} |J^u F(x)|, \ \ \text{and}\ \
U_n(t):=\sum_{a\in \cA:\ \tau(a)=n} J_a^{-t}.
\end{equation}
We allow $U_n(t)=\infty$ if the above sum diverges.
Note that for fixed $n$, the function $t\mapsto U_n(t)$ is non-increasing. Also,
$U_n(t)=0$ if the set $\{a\in \cA:\ \tau(a)=n\}$ is empty.
We then define a function $\kappa: (-\infty, \infty) \to [-\infty, \infty]$ by
\begin{equation}
\kappa(t):=\limsup_{n\to \infty} \frac{1}{n}\log U_n(t).
\end{equation}

\begin{sublemma}\label{lem: kappa}
Under assumption (\textbf{P*}), the function $\kappa$ has the following property:
\begin{itemize}
\item[(1)] The function $\kappa(t)$ is convex and non-increasing on $(-\infty, \infty)$;
\item[(2)] $\kappa(1)<0$ and $\kappa(t_0)\le \log K_0<\infty$, where $t_0$ is given by Condition \eqref{cond left};
\item[(3)] There are $\tb<1<\bt$ such that
\begin{equation}\label{cond kappa}
\kappa(t)<h_1(1-t), \ \ \text{for any}\ t\in (\tb, \bt).
\end{equation}
\end{itemize}
\end{sublemma}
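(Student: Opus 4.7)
The plan is to treat the three claims in order, leveraging H\"older's inequality for (1), the SRB identity $\mu(\cR^*_a)\asymp |J^u F|^{-1}$ from \eqref{SRB J} together with the exponential tail bound \eqref{cRmbd} for (2), and continuity of convex functions for (3).

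For (1), log-convexity of each summand $J_a^{-t}=e^{-t\log J_a}$ in $t$, combined with H\"older's inequality, gives
\[
U_n(\lambda s+(1-\lambda)t)\;\le\; U_n(s)^{\lambda}\,U_n(t)^{1-\lambda}
\]
for $\lambda\in[0,1]$, so $\log U_n$ is convex in $t$; convexity is preserved under $\limsup_n\tfrac{1}{n}(\cdot)$, so $\kappa$ is convex. For monotonicity, uniform hyperbolicity in (\textbf{H1}) forces $|J^u F(x)|\ge\Lambda^{\tau(x)}>1$, so $J_a\ge\Lambda>1$ and $t\mapsto J_a^{-t}$ is non-increasing; hence so are $U_n$ and $\kappa$.

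For (2), the bounded distortion of $T$ in (\textbf{H4})(2) transfers to the induced map $F=T^\tau$ on each rectangle $\cR^*_a$, so that $\sup_{x\in\cR^*_a}|J^u F(x)|$ and $\inf_{x\in\cR^*_a}|J^u F(x)|$ are comparable up to a universal constant. In particular $\mu(\cR^*_a)\asymp J_a^{-1}$ by \eqref{SRB J}, and therefore
\[
U_n(1)\;\asymp\;\sum_{\tau(a)=n}\mu(\cR^*_a)\;=\;\mu(\cR^*_n)\;\le\; C_*\theta_*^n
\]
by Theorem~\ref{main}(3), giving $\kappa(1)\le\log\theta_*<0$. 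The same distortion comparison lets us replace $J_a^{-t_0}$ by $\sup_{x\in\cR^*_a}|J^u F(x)|^{-t_0}$ up to a constant (uniform in $a$), so Condition (\textbf{P*}) yields $U_n(t_0)\le C\cdot C_0 K_0^n$, i.e.\ $\kappa(t_0)\le\log K_0<\infty$.

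For (3), $\kappa$ is finite on $[t_0,\infty)$ by (2) and the monotonicity from (1), and convex there, hence continuous on $(t_0,\infty)$. The auxiliary function $f(t):=h_1(1-t)-\kappa(t)$ is therefore continuous near $t=1$ and satisfies $f(1)=-\kappa(1)>0$, so $f>0$ on some open interval $(\tb,\bt)\ni 1$, which is precisely (3). The main subtlety is step (2): one must carefully transfer the Jacobian distortion bound for $T$ to the induced map $F$, so that the sup-based quantity $J_a^{-1}$ and the SRB measure $\mu(\cR^*_a)$ (which naturally compares to $|J^u F(x)|^{-1}$ at a single point $x\in\cR^*_a$) differ only by a uniform multiplicative constant, independent of the first return time $\tau(a)$. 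Once this bounded-distortion comparison is in hand, the rest of the argument is routine convex analysis.
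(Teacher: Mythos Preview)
Your proposal is correct and follows essentially the same route as the paper: H\"older's inequality for convexity in (1), the Gibbs comparison \eqref{SRB J} plus the tail bound \eqref{cRmbd} for $\kappa(1)<0$ in (2), and continuity of the convex function $\kappa$ near $t=1$ for (3).

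Two small remarks. First, your invocation of bounded distortion for the $\kappa(t_0)$ bound is unnecessary: by definition $J_a=\sup_{x\in\cR^*_a}|J^uF(x)|$, so for $t_0\ge 0$ one has $J_a^{-t_0}=\inf_x|J^uF(x)|^{-t_0}\le \sup_x|J^uF(x)|^{-t_0}$, and Condition~(\textbf{P*}) gives $U_n(t_0)\le C_0K_0^n$ directly (the paper does this in one line). Second, in (3) you implicitly assume $\kappa$ is real-valued near $t=1$; the paper disposes of the degenerate case $\kappa\equiv -\infty$ explicitly before invoking continuity, which you should also do (if $\kappa(1)=-\infty$ the conclusion is trivial anyway).
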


\begin{proof} (1) Fix $n\in \IN$.
Given $t_1, t_2\in (-\infty, \infty)$ and $\alpha\in (0, 1)$, by H\"older inequality,
$$
U_n(\alpha t_1 + (1-\alpha) t_2)
=\sum_{a\in \cA:\ \tau(a)=n} \left[J_a^{- t_1}\right]^{\alpha} \left[J_a^{- t_2}\right]^{1-\alpha}
\le U_n(t_1)^\alpha U_n(t_2)^{1-\alpha}.
$$
Taking $\limsup\limits_{n\to\infty} \frac{1}{n}\log$ on both sides, we have that
$$
\kappa(\alpha t_1 + (1-\alpha) t_2)\le \alpha \kappa(t_1) + (1-\alpha) \kappa(t_2).
$$
Hence the function $\kappa$ is convex. As $t\mapsto U_n(t)$ is non-increasing for fixed $n$,
it is easy to see that $\kappa(t)$ is non-increasing as well.

(2) By \eqref{SRB J}, \eqref{def Un}, as well as \eqref{cRmbd} in Theorem \ref{main},
there is $C>0$ such that
$$
U_n(1)=\sum_{a\in \cA: \ \tau(a)=n} J_a^{-1}\le C\sum_{a\in \cA: \ \tau(a)=n} \mu(\cR^*_a)\le
C \mu(\cR^*_n)\leq CC_* \theta_*^{n}.
$$
Hence
$$
\kappa(1)=\limsup\limits_{n\to\infty} \frac{1}{n}\log U_n(1)\le \log\theta_*<0.
$$
On the other hand, (\textbf{P*}) implies that $U_n(t_0)\le C_0K_0^n$ and thus $\kappa(t_0)\le \log K_0<\infty$.

(3) If $\kappa\equiv -\infty$, then we simply take $\tb=-\infty$ and $\bt=\infty$.

Otherwise, one must have $\kappa(t)>-\infty$ for all $t\in (-\infty, \infty)$ due to convexity.
Let $\tb_0=\inf\{t\in \IR:\ \kappa(t)<\infty\}$, then $-\infty\le \tb_0\le t_0$.
By monotonicity, $\kappa(t)<\infty$ for all $t>\tb_0$.
Hence $\kappa$
is a real-valued convex function on the open interval $(\tb_0, \infty)$, and is thus continuous.

Take $\kappa_1(t)=\kappa(t)-h_1(1-t)$ for $t\in (\tb_0, \infty)$, then it is clear that $\kappa_1$ is continuous and
$\kappa_1(1)=\kappa(1)<0$.
By continuity, there are $\tb_0<\tb<1<\bt\le \infty$ such that
$\kappa_1(t)<0$ for any $t\in (\tb, \bt)$, from which \eqref{cond kappa} follows.
\end{proof}

Now we are ready to prove Lemma \ref{lem: check P3-5}.

\begin{proof}[Proof of Lemma \ref{lem: check P3-5}]
Let $\tb<1<\bt$ be given by Sublemma \ref{lem: kappa}. For any $t\in (\tb, \bt)$,
we set $c_t=h_1(2-t)$ and choose $\eps_t$ such that $0<\eps_t<h_1(1-t)-\kappa(t)$.
By Sublemma \ref{lem: kappa}, there is $D_t>0$ such that
$$
U_n(t)\le D_t e^{\kappa(t)n}\le  D_t e^{n[h_1(1-t)-\eps_t]}.
$$
Condition (\textbf{P3}) for the potential $\varphi_t-c_t$ reads that
$
\sum_{a\in \cA} J_a^{-t} e^{-c_t \tau(a)}<\infty,
$
which is satisfied since
$$
\sum_{a\in \cA} J_a^{-t} e^{-c_t \tau(a)}=\sum_{n=1}^\infty e^{-c_t n} U_n(t)
\le D_t \sum_{n=1}^\infty e^{-n(h_1+\eps_t)} <\infty.
$$
To verify Condition (\textbf{P4}) for the potential $\varphi_t-c_t$, we first notice that
$$
(\varphi_t-c_t)^+=\overline{\varphi_t-c_t}-P_L(\varphi_t-c_t)\tau
=\overline{\varphi_t}-c_t \tau-P_L(\varphi_t)\tau +c_t \tau=\overline{\varphi_t}-p_t\tau.
$$
Then we take $\eps=\eps_t/2$, and it suffices to show that
$$
\sum_{a\in \cA} \tau(a) \sup_{x\in \cR_a^*}\exp(\overline{\varphi_t}(x)-p_t\tau(x)+\eps\tau(x))
=\sum_{n=1}^\infty n e^{-n(p_t-\eps_t/2)} U_n(t)<\infty.
$$
Indeed, by Sublemma \ref{lem: PSZ pt}, one has
$$
\sum_{n=1}^\infty n e^{-n(p_t-\eps_t/2)} U_n(t)\le D_t\sum_{n=1}^\infty n e^{n[h_1(1-t)-p_t-\eps_t/2]}
\le D_t\sum_{n=1}^\infty n e^{-n\eps_t/2}<\infty.
$$
Finally we verify Condition (\textbf{P5}), that is, the Gibbs measure $\nu_{t}$ for the potential $(\varphi_t-c_t)^+=\overline{\varphi_t}-p_t\tau$ has exponential tail.
Note that the existence and uniqueness of $\nu_t$ is guaranteed
by Proposition \ref{thm: PSZ16}, as Condition (P1)-(P4) hold.
Moreover, $P_G((\varphi_t-c_t)^+\circ\opi)=0$ by Proposition \ref{thm: PSZ16} (a).
There is $C>0$ such that for any $a\in \cA$,
$$
\nu_t(\cR^*_a)\le C \sup_{x\in \cR^*_a}\exp((\varphi_t-c_t)^+(x))=C\ J_a^{-t} e^{-p_t\tau(a)}.
$$
Thus $\nu_{t}$ has exponential tail since
\begin{align*}
\nu_t(\{x\in \cR^*: \ \tau(x)\ge N\})&=\sum_{n\ge N} \sum_{a\in \cA: \ \tau(a)=n} \nu_t(\cR^*_a)
\le C\sum_{n\ge N} e^{-np_t} U_n(t) \\
&\le  CD_t \sum_{n\ge N} e^{n[h_1(1-t)-p_t-\eps_t]} \le  CD_t \sum_{n\ge N} e^{-n \eps_t} \le C_1 (e^{- \eps_t})^{N},
\end{align*}
for some constant $C_1>0$. This completes the proof of Lemma \ref{lem: check P3-5}.
\end{proof}

Now we are ready to prove the main Theorem \ref{thm: equi measures}.

\begin{proof}[Proof of Theorem \ref{thm: equi measures}]
Lemma \ref{lem: check P1-2} shows that Conditions (\textbf{P1}) and (\textbf{P2}) hold for all $\varphi_t$, and
Lemma \ref{lem: check P3-5}  shows that Conditions \textbf{(P3)-(P5)} hold for the potential $\varphi_t-c_t$
for all $t\in (\tb, \bt)$, where $c_t\in \IR$ is a constant that depends on $t$. Since $\varphi_t$ and $\varphi_t-c_t$ are cohomologous they admit the same equilibrium measures,
Theorem \ref{thm: equi measures} directly follows from Proposition \ref{thm: PSZ16}.
\end{proof}

\section{Applications to Perturbations of The Lorentz Gas}\label{sec: app}

In this section, we apply our results in previous sections to
the billiard maps associated with the periodic Lorentz gas,
also known as the Sinai dispersing billiards
(see~\cite{Sin70, GO74, SC87, CH96} and the references therein).
We also consider small perturbations of the Lorentz gas subject to external forces both during flights and at collisions,
whose ergodic properties were extensively studied in \cite{C01, C08, DZ11, Zh11, DZ13}.

\subsection{The Lorentz Gas and Its Perturbations}\label{sec: Lorentz setup}


We begin by fixing a billiard table
$\bQ:=\mathbb{T}^2 \setminus \bigcup_{i=1}^d \{\mbox{interior } \bGamma_i\}$
associated with a periodic Lorentz gas,
where $\bGamma_i$ are finitely many disjoint
scatterers placed inside the 2-torus $\mathbb{T}^2$.
Moreover, the scatters have $C^3$ boundaries with strictly positive curvature:
\begin{itemize}
\item[(i)] the $C^3$ norm of the boundary $\partial \bGamma_i$ in $\bQ$ is bounded above by
a constant $\bE_*>0$;
\item[(ii)] the curvature of the boundary $\partial \bQ$ is between $\bcK_*$
and $\bcK_*^{-1}$ for some $\bcK_*\in (0, 1)$.
\end{itemize}

We consider the dynamics of the billiard map on the table $\bQ$,
subject to external forces both during flight and at collisions.
Let $\bx=(\bq, \bp)\in \bOmega:=\cT \bQ$ be a phase point, where
$\bq$ is the position of a particle in the billiard table and
$\bp$ be the velocity vector.
For a $C^2$ stationary external force
$\mathbf{F}: \mathbb{T}^2 \times \mathbb{R}^2 \to \mathbb R^2$, the perturbed billiard flow	
${\bf \Phi}^t$ satisfies the following differential equation between collisions:
\begin{equation}\label{flowf}
    \frac{d \bq}{dt} =\bp(t) , \qquad
    \frac{d \bp}{dt} = \mathbf{F}(\bq, \bp) .
\end{equation}
At collision, the trajectory experiences possibly nonelastic reflections with slipping along
the boundary:
\begin{equation}\label{reflectiong}
(\bq^+(t_i), \bp^+(t_i)) = (\bq^-(t_i), \bR  \bp^-(t_i)) +\mathbf G(\bq^-(t_i), \bp^-(t_i)),
\end{equation}
where $\bR \bp^-(t_i)= \bp^-(t_i)+2(\bn(\bq^-)\cdot \bp^{-})\bn(\bq^-))$ is the usual reflection operator,
$\bn(\bq)$ is the unit normal vector to the billiard wall $\partial \bQ$ at
$\bq$ pointing inside the table,
and $\bq^-(t_i), \bp^-(t_i)$, $\bq^+(t_i)$ and $\bp^+(t_i)$ refer to the incoming and
outgoing position and velocity vectors, respectively.
$\mathbf G$ is an external force acting on the incoming trajectories.
Note that we allow $\bf G$ to change both the position and velocity of the particle at the moment
of collision. The change in velocity can be thought of as a kick or twist while a change in
position can model a slip along the boundary at collision.

In \cite{C01, C08}, Chernov considered billiards under small external forces $\mathbf{F}$
with $\mathbf G=0$, and $\mathbf{F}$ to be stationary.
In \cite{Zh11} a twist force was considered assuming $\mathbf{F}=0$ and
$\bG$ depending on and affecting only the velocity, not the position.
Here we consider a combination of these two cases for systems
under more general forces $\mathbf{F}$ and $\mathbf{G}$.
We make the following four assumptions, combining those in \cite{C01, Zh11}. \\

\noindent\textbf{(A1)} (\textbf{Invariant space}) \emph{Assume the dynamics preserve
a smooth function $\bcE(\bq,\bp)$ on $\bOmega$.
Its level surface $\bOmega_c:=\bcE^{-1}(c)$, for any $c>0$,
is a compact 3-d manifold such that $\|\bp\|>0$ on $\bOmega_c$ and
for each $\bq\in \bQ$ and $\bp\in S^1$ the ray $\{(\bq, t\bp), t>0\}$
intersects the manifold $\bOmega_c$ in exactly one point.} \\

Assumption ({\bf{A1}}) specifies an additional integral of motion,
so that we only consider restricted systems on a compact level surface $\bOmega_c$.
Under this assumption the particle will not become overheated, and its speed will remain bounded.
For any phase point $\bx \in \bOmega$ for the flow,
let $\btau(\bx)$ be the length of the  trajectory between $\bx$ and its next non-tangential collision. \\

\noindent(\textbf{A2}) (\textbf{Finite horizon}) \emph{
There exists $\btau_*>0$ such that free paths between successive reflections are uniformly bounded, i.e.,
$\btau_* \le \btau( \bx ) \le  \btau_*^{-1}$ for any  $\bx \in \Omega. $
}\\

\noindent(\textbf{A3}) (\textbf{Smallness of the perturbation}).
\emph{We assume there exists $\beps_*>0$ small enough, such that
$\|\mathbf{F}\|_{C^1}<\beps_*$ and $\|\mathbf G\|_{C^1}<\beps_*.$
}\\

For some fixed $c>0$. We denote $T_{\bF,\bG}: M\to M$ as the billiard map
associated to the billiard flow ${\bf \Phi}^t$ on the compact level surface $\bOmega_c$,
where $M$ is the collision space containing all post-collision vectors
based at the boundary of the billiard table $\bQ$.
More explicitly, we set $M = \bigcup_{i=1}^d I_i \times [-\pi/2, \pi/2]$,
where each $I_i$ is an interval with endpoints identified and
$|I_i| = |\partial \bGamma_i|$, $i=1, \dots,  d$.
The phase space $M$ for the billiard map $T_{\bF, \bG}$
is then parametrized by the canonical coordinates $(\br, \bvf)$,
where $\br$ represents the arclength parameter on the boundaries of the scatterers (oriented clockwise),
and $\bvf$ represents the angle an outgoing trajectory makes with the
unit normal to the boundary at the point of collision.\\

\noindent(\textbf{A4})  \emph{We assume both forces $\bF$ and $\bG$ are stationary
and that $\bG$ preserves tangential collisions. Moreover, we assume that $\bF$ and $\bG$ are chosen such that the perturbed system is time-reversible.}\\

The case $\mathbf{F} = \mathbf{G}=0$ corresponds to
the classical billiard dynamics for the periodic Lorentz gas.
It preserves the kinetic energy $\bcE =\frac{1}{2} \|\bp\|^2$.
We denote by $\cF(\bQ, \btau_*, \beps_*)$ the class of all perturbed billiard maps
defined by the dynamics \eqref{flowf} and \eqref{reflectiong} under
forces $\mathbf{F}$ and $\mathbf{G}$, satisfying assumptions {\bf (A1)}-{\bf (A4)}.

\subsection{Proof of Theorem \ref{thm: C1}}

The fact that Assumptions {\bf (H1)}-{\bf (H5)} hold for any perturbed billiard map
$T\in \cF(\bQ, \btau_*, \beps_*)$ was established in \cite{DZ11, DZ13}.
In fact, for Assumptions \textbf{(H3)}, only the existence and finitude of ergodic SRB measures
were proven by the spectral analytic approach in \cite{DZ11, DZ13}.
Nevertheless, as each of the SRB measures is mixing up to a cyclic permutation,
we can always assume that there is a mixing SRB measure $\mu$, by taking a higher power of $T$.

It remains to show that Condition \eqref{cond left} holds when $t_0\in (1/2, 1)$, that is,

\begin{lemma}\label{lem: Lorentz cond left}
For sufficiently small $\beps_*>0$,
the perturbed system $T\in \cF(\bQ, \btau_*, \beps_*)$
satisfies Condition (\textbf{P*}) given by \eqref{cond left} for any $t_0\in (1/2, 1)$.
\end{lemma}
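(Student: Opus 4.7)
The plan is to establish Condition (\textbf{P*}) by a direct combinatorial estimate: I would decompose each component $\cR^*_a$ with $\tau(a)=n$ according to the sequence of homogeneity strips visited by its trajectory, exploit the blow-up of the unstable Jacobian near tangential collisions, and note that the threshold $t_0>1/2$ arises from the borderline summability of $\sum_{k}k^{-2t_0}$.

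Recall that for Sinai billiards and their small perturbations (cf.~\cite{DZ11,DZ13}) one adjoins the homogeneity strips $\IH_k=\{(r,\varphi):\pi/2-k^{-2}<|\varphi|<\pi/2-(k+1)^{-2}\}$, $k\geq k_0$, to $\cS_1^*$, and inside $\IH_k$ one has $\cos\varphi\asymp k^{-2}$. For $\beps_*$ small, the standard dispersing-billiard expansion bound persists:
$$|J^uT(x)|\;\geq\; c_1\bigl(1+\btau_*\bcK_*/\cos\varphi(Tx)\bigr).$$
Hence, if the orbit $\{T^ix\}_{i=1}^{n}$ of a point $x\in \cR^*_a$ visits the homogeneity-strip sequence $(\IH_{k_1},\ldots,\IH_{k_n})$, iteration yields
$$J_a\;:=\;\sup_{x\in\cR^*_a}|J^uF(x)|\;\geq\;c_2^{\,n}\prod_{i=1}^{n}k_i^{2}$$
with $c_2>0$ uniform in $\beps_*$.

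I would then bound the number of components realizing a prescribed itinerary. Apart from the strip sequence $(k_i)$, at each step the trajectory selects from only finitely many further combinatorial data (which scatterer, which side of a curve of $\cS_0^*$, etc.), and the finite-horizon condition \textbf{(A2)} bounds this multiplicity by some $D>0$ independent of $n$. Summing,
$$\sum_{a:\tau(a)=n}J_a^{-t_0}\;\leq\;D^{n}c_2^{-nt_0}\Bigl(\sum_{k\geq k_0}k^{-2t_0}\Bigr)^{n}\;=\;\Bigl(Dc_2^{-t_0}\sum_{k\geq k_0}k^{-2t_0}\Bigr)^{n},$$
and the inner sum converges precisely when $2t_0>1$, yielding \eqref{cond left} with $C_0=1$ and $K_0=Dc_2^{-t_0}\sum_{k\geq k_0}k^{-2t_0}$.

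The main obstacle is verifying that both $D$ and $c_2$ remain uniform across the class $\cF(\bQ,\btau_*,\beps_*)$ and survive the construction of the magnet $\cR^*$ in Section~5. For the constants, the smallness assumption \textbf{(A3)} keeps the perturbed singular set $\cS_1^*$ in a $C^1$-neighborhood of its unperturbed counterpart, so the finite-horizon complexity bound and the expansion estimate carry over with uniform constants. For the latter, the four stable/unstable boundary curves appended to $\cS_{\pm 1}^*$ during the construction of $\cR^*$ contribute only an $\cO(1)$ additional multiplicity per step to $D$, which is harmless. Once these verifications are in place, the displayed estimate completes the proof.
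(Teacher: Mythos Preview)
Your approach is essentially the same as the paper's: decompose the components $\cR^*_a$ with $\tau(a)=n$ according to the homogeneity-strip itinerary $(\IH_{k_1},\dots,\IH_{k_n})$, bound the multiplicity of components sharing a given itinerary by $D^n$ via the finite-horizon complexity estimate, use the Jacobian bound $|J^uT|^{-1}\le C k^{-2}$ on $\IH_k$, and then sum the resulting geometric series, noting that $\sum_k k^{-2t_0}<\infty$ exactly when $t_0>1/2$. One small omission: your displayed bound $J_a\ge c_2^n\prod k_i^2$ and the sum $\sum_{k\ge k_0}k^{-2t_0}$ tacitly assume every iterate lands in some $\IH_k$ with $k\ge k_0$, whereas many collisions fall in the central strip $\IH_0$; the paper handles this by assigning the factor $\bLambda_*^{-1}$ to such steps, contributing an extra constant term $\bLambda_*^{-t_0}$ inside the parenthesis, which is harmless.
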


Before we prove Lemma \ref{lem: Lorentz cond left},
we first recall some facts about the hyperbolicity and singularities
for the perturbed billiard systems.
As introduced in the previous subsection \ref{sec: Lorentz setup},
the phase space for $T\in \cF(\bQ, \btau_*, \beps_*)$ is given by
$M = \bigcup_{i=1}^d I_i \times [ - \frac{\pi}{2}, \frac{\pi}{2} ]$.
We define the set $\cS_0 = \{ \bvf = \pm \frac{\pi}{2} \}$ and
$\cS_{\pm n} = \cup_{i = 0}^n T^{\mp i} \cS_{0}$.
For a fixed $\bk_0 \in \mathbb{N}$,
we define for $\bk \geq \bk_0$, the homogeneity strips,
\beq\label{homogeneity}
\IH_{\bk} = \{(\br,\bvf) : \pi/2 - \bk^{-2} < \bvf < \pi/2 - (\bk+1)^2 \}.
\eeq
The strips $\IH_{-\bk}$ are defined similarly near $\bvf = -\pi/2$.  We also define
$$\IH_0 = \{ (\br, \bvf) : -\pi/2 + \bk_0^{-2} < \bvf < \pi/2 - \bk_0^{-2} \}.$$
The set $\cS_{0,H} = \cS_0 \cup (\cup_{|\bk| \ge \bk_0} \partial \IH_{\pm \bk} )$ is therefore fixed
and will give rise to the singularity sets for the map $T\in \cF(\bQ, \btau_*, \beps_*)$.
We define
$\cS_{\pm n}^H = \cup_{i = 0}^n T^{\mp i} \cS_{0,H}$ to be the singularity sets for
$T^{\pm n}$, $n \ge 0$.
Note that if $(\br, \bvf)\in \IH_{\pm \bk}$, then $\cos\bvf\le \bk^{-2}$.

For any $n\in \IN$,
we denote the image of $\bx=(\br, \bvf)$ under $T^n$ by
$\bx_n=T^n\bx=(\br_n, \bvf_n)$.
Let $\|\cdot\|$ be the norm induced by the Euclidean metric on $\Omega=\cT \bQ$.
It was shown in \cite{DZ13} that under an adapted norm $\|\cdot\|_*$,
the Jacobian of $T$ along the unstable manifold satisfies that
$$
\dfrac{\|d\bx_1\|_*}{\|d \bx\|_*}\ge 1+\frac13 \btau_*\bcK_*=:\bLambda_*>1.
$$
Near grazing collisions, there are $\bB_1>0$ and $\bB_2>0$, which depends on $\bQ$ and  $\btau_*$
such that
$$
\dfrac{\bB_1(1-\bB_2\beps_*)}{\cos \bvf_1}\le \dfrac{\|d\bx_1\|_*}{\|d \bx\|_*}\le \dfrac{\bB_1(1+\bB_2\beps_*)}{\cos \bvf_1}.
$$
Therefore, for sufficiently small $\beps_*$, there is $\bL_0>0$ such that
\begin{equation}\label{Jac cos bound 1}
\dfrac{\|d\bx\|_*}{\|d \bx_1\|_*}\le
\begin{cases}
 \bLambda_*^{-1}, \ & \ \bx_1\in \IH_0, \\
 \bL_0 \cos\bvf_1\le \bL_0\bk^{-2}, \ & \ \bx_1\in \IH_{\pm \bk}\ \text{for}\ \bk\ge \bk_0,
\end{cases}
\end{equation}
that is, $\dfrac{\|d\bx\|_*}{\|d \bx_1\|_*}\le \bD_\bk$ if $\bx_1\in \IH_{\pm \bk}$, where $\bD_\bk$ is given by
\begin{equation}\label{Jac cos bound 2}
\bD_\bk=
\begin{cases}
\bLambda_*^{-1}, \ & \ \bk=0, \\
\bL_0\bk^{-2}, \ & \ \bk\ge \bk_0.
\end{cases}
\end{equation}

We are now ready to prove Lemma \ref{lem: Lorentz cond left}.

\begin{proof}[Proof of Lemma \ref{lem: Lorentz cond left}]
According to the structure of singular curves and finite horizon property (\textbf{A2}),
there is $\bdelta_0>0$ such that for any unstable manifold $W$ with length $|W|\le \bdelta_0$,
the image $TW$ can be cut by $\cS_0$ into at most
$
\dfrac{\max \btau(x)}{\min\btau(x)}\le \btau_*^{-2}
$
components (see \cite[\S 5.10]{CM}).
Each component is a strictly increasing curve in some $I_i \times [ - \frac{\pi}{2}, \frac{\pi}{2} ]$,
and can be further cut by each homogeneous singular curve in $\cS_{0, H}$ at most once.

By induction and  Assumption (\textbf{H4})(1), since any unstable manifold $W$ has length no greater than $\bc_M$,
$T^nW$ can be cut by $\cS_{n-1}$ into at most
$
\left(\bc_M \btau_*^{-2}/\bdelta_0\right)^n
$
increasing curves, each of which can be further cut by each homogeneous curve in $\cS_{n-1}^H$ at most once.
Therefore, by the construction of $\cR^*_a$ in Theorem \ref{main},
for any $a\in \cA$ with $\tau(a)=n$, the cardinality of the following set
\beqn
\{a'\in \cA:\ \tau(a')=n, \ \text{and} \ T^j\cR^*_{a'} \ \text{and}\ T^j\cR^*_{a}\ \text{are in the same}\ \IH_{\bk} \ \text{for all}\ 1\le j\le n\}
\eeqn
is no greater than $\left(\bc_M \btau_*^{-2}/\bdelta_0\right)^n$.
Thus, for any $t_0\in (1/2, 1)$, by \eqref{Jac cos bound 1} and \eqref{Jac cos bound 2}, we have
\begin{eqnarray*}
\sum_{a\in \cA:\ \tau(a)=n}\ \  \sup_{\bx\in\cR^*_a} |J^u F(\bx)|_*^{-t_0}
&\le & \sum_{a\in \cA:\ \tau(a)=n}\ \  \sup_{\bx\in\cR^*_a}  \left( \dfrac{\|d\bx\|_*}{\|d \bx_1\|_*}
\dfrac{\|d\bx_1\|_*}{\|d \bx_2\|_*} \dots \dfrac{\|d\bx_{n-1}\|_*}{\|d \bx_n\|_*} \right)^{t_0} \\
&\le & \left(\bc_M \btau_*^{-2}/\bdelta_0\right)^n \sum_{(\bk_1, \dots, \bk_n):\ \bx_i\in \IH^{\bk_i}}
\left( \bD_{\bk_1} \dots \bD_{\bk_n} \right)^{t_0} \\
&\le & \left(\bc_M \btau_*^{-2}/\bdelta_0\right)^n \left( \sum_{\bk} \bD_\bk \right)^n\\
&\le & \left(\bc_M \btau_*^{-2}/\bdelta_0\right)^n \left(\bLambda_*^{-t_0}+\bL_0^{t_0} \sum_{|\bk|\ge\bk_0}\bk^{-2t_0}\right)^n=: K_0^n.
\end{eqnarray*}
Transferring back to the Euclidean metric, we have that
$$\sum\limits_{a\in \cA:\ \tau(a)=n}\ \  \sup\limits_{\bx\in\cR^*_a} |J^u F(\bx)|^{-t_0}\le C_0 K_0^n$$
for some $C_0>0$. This finishes the proof of Lemma \ref{lem: Lorentz cond left}.
\end{proof}

\section*{Acknowledgements}
F.~Wang is supported by the State Scholarship Fund from China Scholarship Council (CSC).
H.-K.~Zhang is partially supported by the NSF Career Award (DMS-1151762).

\end{document}